\documentclass{amsart}

\usepackage{amssymb, amsmath, graphicx,  amsfonts, amsthm, url}
\usepackage{mathrsfs}


\begin{document}

\title[Geodesics and information on  equilibrium probabilities]{Geodesics and  dynamical information projections on the manifold of H\"older equilibrium probabilities}




\author{Artur O. Lopes}
\address{Inst. de Matematica e Estatistica - UFRGS - Porto Alegre - Brazil}
\curraddr{}
\email{arturoscar.lopes@gmail.com}

\author{Rafael O. Ruggiero}
\address{Dept. de Matematica - PUC - Rio de Janeiro - Brazil}
\curraddr{}
\email{rafael.o.ruggiero@gmail.com }

\subjclass[2020]{37D35; 37A60; 94A15; 94A17}

\keywords{Geodesics; infinite-dimensional Riemannian manifold; equilibrium probabilities; KL-divergence; information projections; Pythagorean inequalities; Fourier-like basis}

\date{}

\dedicatory{}

\begin{abstract}
We consider here the  discrete time  dynamics described by a transformation $T:M \to M$, where $T$ is either  the action of shift $T=\sigma$  on the symbolic space $M=\{1,2,...,d\}^\mathbb{N}$, or, $T$ describes  the action of a $d$ to $1$  expanding transformation $T:S^1 \to S^1$ of class $C^{1+\alpha}$ (\,for example   $x \to T(x) =d\, x $ (mod $1) $\,), where $M=S^1$ is the unit circle.
It is known that the infinite-dimensional manifold $\mathcal{N}$ of   equilibrium probabilities for H\"older potentials $A:M \to \mathbb{R}$ is an analytical manifold  and carries a natural Riemannian metric associated with the asymptotic variance. We show here that under the assumption of the existence of a Fourier-like Hilbert basis for the kernel of the Ruelle operator there exists geodesics paths. When $T=\sigma$ and $M=\{0,1\}^\mathbb{N}$ such basis exists.

In a different direction, we also consider
the KL-divergence $D_{KL}(\mu_1,\mu_2)$ for a  pair of equilibrium probabilities.   If  $D_{KL}(\mu_1,\mu_2)=0$, then $\mu_1=\mu_2$. Although $D_{KL}$  is not a metric in $\mathcal{N}$, it describes the proximity between $\mu_1$ and $\mu_2$. A natural problem is: for a fixed probability $\mu_1\in \mathcal{N}$  consider the probability $\mu_2$ in a certain set of probabilities in $\mathcal{N}$, which minimizes $D_{KL}(\mu_1,\mu_2)$. This minimization problem is a dynamical version of the main issues considered  in information projections.  We consider this problem in $\mathcal{N}$, a case where all probabilities are dynamically  invariant,  getting explicit equations for the solution sought. Triangle and Pythagorean inequalities  will be investigated.
\end{abstract}

\maketitle

\newtheorem{theorem}{Theorem}[section]
\newtheorem{lemma}[theorem]{Lemma}
\newtheorem{proposition}[theorem]{Proposition}
\newtheorem{corollary}[theorem]{Corollary}
\newtheorem{question}{Question}

\theoremstyle{definition}
\newtheorem{definition}[theorem]{Definition}
\newtheorem{remark}[theorem]{Remark}
\newtheorem{example}[theorem]{Example}

\newcommand{\fspace}[1]{\mathcal{#1}}
\newcommand{\qspace}[1]{\widehat{\mathcal{#1}}}
\newcommand{\spacem}[1]{\mathcal{#1}}
\newcommand{\op}[1]{\mathscr{#1}}
\newcommand{\dd}{\mathrm{d}}
\newcommand{\supp}{\operatorname{supp}}
\newcommand{\Var}{\operatorname{Var}}
\newcommand{\entropy}{\operatorname{h_{\fspace{X}}}}
\newcommand{\pressure}{\operatorname{Pr}}
\newcommand{\lspan}{\operatorname{span}}
\newcommand{\Id}{\operatorname{Id}}
\newcommand{\interior}{\operatorname{int}}
\newcommand{\rv}{\operatorname{rv}}
\newcommand{\Rot}{\operatorname{Rot}}
\newcommand{\Holder}{\operatorname{Hol}}
\newcommand{\eqdef}{\mathbin{\overset{\footnotesize{\mathrm{def}}}{=}}}

\newcommand{\cL}{\mathcal{L}}
\newcommand{\bR}{\mathbb{R}}
\newcommand{\bN}{{\mathbb N}}

\newcommand{\fr}{\partial}

\section{Introduction}

Recent developments about the analytic and geometric structure of the set of normalized potentials for expanding linear maps on the circle and the shift of finite symbols, reveal a rich, challenging context to explore classical problems of calculus of variations in infinite dimensional Riemannian manifolds. The metric we consider does not correspond (as explained in \cite{GKLM}) to the $2$-Wasserstein  metric on the space of probabilities (where probabilities have no dynamical content).

In the first part of the paper (Sections 1-4) we consider a time evolution on the space $\mathcal{N}$ of H\"{o}lder -equilibrium probabilities $\mu$, which can be parameterized  by H\"{o}lder  Jacobians $J_\mu:M \to (0,1)$ (see \cite{GKLM}). This provides the analytic structure on $\mathcal{N}$. We show the existence  of geodesics for a natural Riemannian metric on  $\mathcal{N}$ (previously introduced in \cite{GKLM}).  Given a probability $\mu\in \mathcal{N}$ and a tangent vector (a function)
$\varphi$, the Riemannian norm  $||\varphi||$ is described by the asymptotic variance of $\varphi$ with respect to $\mu$.
In this sense this metric is  naturally dynamically defined.
This Riemannian metric is
related (equal up to a constant value) to the one presented in \cite{McM} (also called the pressure metric in \cite{BCS}).

This point of view can be understood as a possible mathematical description of non-equilibrium Statistical Mechanics, where a continuous time evolution is observed in the space of probabilities (the geodesic flow). Given two H\"{o}lder  equilibrium states, one can ask about an {\it optimal} (in some natural and dynamical sense) path connecting these two probabilities; in this case, the path minimizing asymptotic variance of tangent vectors.
A related but different setting appears in \cite{Jiang}.

In the second part of the work (about Information Projections) we analyze issues related  to   geometry on the space of equilibrium measures. More precisely, the study of the minimization (or maximization) of the {\it distance} of a fixed probability $\mu_0$ to a given compact  set $K\subset \mathcal{N}$ (this set  is convex when parameterized by Jacobians, as explained in Section \ref{DIP}); the distance used (is not exactly a metric) is described by the relative entropy (also known as  Kullback-Leibler  divergence). We present analytic expression for critical points. Given a certain compact  set  $K$, we are interested in minimizing (or maximizing) relative entropy $\mu \in \mathcal{K}\to h(\mu_0,\mu)$ of $\mu\in \mathcal{K}$ with respect to $\mu_0$; this can be understood as a problem in Ergodic Optimization (see \cite{BLL})
with constraints, where the potential to be minimized (maximized) is the relative entropy  $\mu \to h(\mu_0,\mu)$ (see also the analytic expression \eqref{a1}).

Information projections   are  important  tools  in Deep Learning (see   \cite{Nielsen}, \cite{PolWu} or \cite{Good}), in the study of the Fisher Information (see \cite{Ama} and Section 5 in \cite{LR}), in the understanding of the maximum likelihood estimator and in Information Geometry,  where the probabilities on the associated  manifold do not have dynamical content (see \cite{Ama}).
 We quote F. Nielsen in \cite{Nielsen}:

{\it Information projections are a core concept of information sciences that are met whenever minimizing divergences.}

We consider such class of problems in a dynamical setting; in particular triangle and Pythagorean inequalities.

\smallskip

Now,  let us be more precise (in mathematical terms) about what we talked about above. We consider the  discrete time dynamics given by a transformation $T:M \to M$, where $T$ is either  the action of shift $T=\sigma$  on the symbolic space $M=\{1,2,...,d\}^\mathbb{N}$, or, $T$ describes  the action of a $d$ to $1$  expanding transformation $T:S^1 \to S^1$ of class $C^{1+\alpha}$ (\,for example   $x \to T(x) =d\, x $ (mod $1) $\,), where $M=S^1$ is the unit circle. For fixed $M$,
it is known that the set $\mathcal{N}$ of  equilibrium probabilities for H\"older potentials $A:M \to \mathbb{R}$ is an infinite dimensional, analytic manifold  and carries a natural Riemannian metric (see \cite{GKLM} and \cite{LR1}).  We say that a potential $A$ is normalized, if $\sum_{T(y)=x} e^{ A(y)}=1$, for all $x\in M$ (see \cite{PP} and \cite{GKLM}).  Points in $\mathcal{N}$ will be denoted indistinctly by normalized potentials $A$, or by  $\mu_A$, which is  the  equilibrium probability for the topological pressure $P(A)$ (this notation will be used in Subsection \ref{yu}). Equilibrium probabilities are sometimes called Gibbs probabilities (they are the same here).

According to \cite{GKLM}, given an equilibrium probability $\mu_A\in \mathcal{N}$, for the H\"older potential $A:M \longrightarrow \mathbb{R}$, the set of tangent vectors to
$\mathcal{N}$ at $\mu_A$ is the set of H\"older functions on the kernel of the Ruelle operator $\op{L}_A$ (see \eqref{K37} for definition). The Riemannian metric $g$ acting on tangent vectors at the base point $\mu_A$ is the $L^{2}$ inner product, $g_{A}(X,Y) = \int X\, Yd\mu_{A}$, where $A$ is a normalized
H\"older potential,  as  defined in \cite{GKLM}.



A study of the sectional curvatures of $\mathcal{N}$ is made in \cite{LR1}, where it is given a formula for the sectional curvatures in terms of an orthonormal basis of the tangent space at each point.
Equilibrium probabilities for potentials $A:\{0,1\}^\mathbb{N} \to \mathbb{R}$ that depend on the first two coordinates on the symbolic space  $M=\{0,1\}^\mathbb{N}$ are Markov probabilities on $\{0,1\}^\mathbb{N}$.
In this case, explicit examples  show that there exist pairs $(X,Y)$ on the tangent space to $\mathcal{N}$ where the absolute values of the sectional curvatures may attain arbitrarily large numbers, in contrast with finite dimensional Riemannian geometry. In  \cite{LR1} it is also shown that in this case the sectional curvature for pair of tangent vectors in $\mathcal{N}$ can be positive, zero, or negative. However, this two dimensional manifold has zero curvature at every point for the Riemannian structure inherited from $g$ (see \cite{LR1}).

It
is not known in the general case if the infinite-dimensional manifold $\mathcal{N}$ endowed with the Riemannian metric $g$ is complete. These facts strongly suggest that the
study of geodesics in $\mathcal{N}$ might be a subtle issue.


The purpose of the article is twofold. First of all, we deal with the problem of the existence of geodesics in $\mathcal{N}$ equipped with the $L^{2}$ Riemannian metric described in \cite{GKLM} and \cite{LR1}. This is the content of the first three sections.
The existence of geodesics in an infinite dimensional manifold is not a simple task.

\begin{definition} \label{suit} We say that the  equilibrium probability $\mu_A\in \mathcal{N}$ associated to the  H\"older potential $A$ is {\it Fourier-like}, if there exists a countable orthonormal Hilbert basis $\gamma_n$, $n \in \mathbb{N}$, of the kernel of the Ruelle operator $\op{L}_A$, and constants $\alpha>0, \beta>0$, such that,

I) the  functions $\gamma_n$, $n \in \mathbb{N}$, in   the family $ \mathcal{B} $ have $C^0$ and $L^2(\mu_A)$ norms uniformly bounded above by the constant $\beta>0$,

II) the  functions $\gamma_n$, $n \in \mathbb{N}$, in   the family $ \mathcal{B} $ have   $C^0$ and $L^2(\mu_A)$ norms uniformly bounded below by the constant $\alpha>0$.

We call such a basis a Fourier-like Hilbert basis (Fourier-like basis for short).

\end{definition}

The existence of a Fourier-like basis  for the kernel of the Ruelle operator plays an important role and its existence is discussed in the Appendix Subsection \ref{aqui1}.

One of our main result is:

\begin{theorem} \label{geodesic-potentials-1} Given
$M$, $T$ and a H\"older normalized potential  $A \in \mathcal{N}$, suppose there exist a Fourier-like Hilbert basis for the kernel of the Ruelle operator $\op{L}_A$. Then,  there exists an open ball $B_{r}(A)$ around $A$ such that for every $Q \in B_{r}(A)$ and every unit vector $X \in T_{B}\mathcal{N}$, there exists a unique geodesic $\gamma_{X} : (-\epsilon, \epsilon) \longrightarrow B_{r}(A)$ such that $\gamma_{X}(0) = Q$, $\gamma'_{X}(0) = X$, where $\epsilon >0$ depends on $A, Q$.

When
$M=\{0,1\}^\mathbb{N}$ and $T=\sigma$  we show the existence of a  Fourier-like Hilbert basis for the kernel of the Ruelle operator and then  it follows that geodesics exist as described above.
\end{theorem}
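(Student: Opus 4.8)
The strategy is to realize the geodesic equation as an ODE in a Banach space and invoke the infinite-dimensional Picard–Lindelöf theorem. First I would set up coordinates: near the normalized potential $A$, points of $\mathcal{N}$ are parameterized by their Jacobians $J_\mu$, and the tangent space at each nearby point $Q$ is the kernel of the Ruelle operator $\mathcal{L}_Q$, carrying the inner product $g_Q(X,Y)=\int XY\,d\mu_Q$. Using the Fourier-like basis $\{\gamma_n\}$ of $\ker\mathcal{L}_A$, I would first express the metric tensor $g_Q$ in the moving frame obtained by projecting (or parallel-transporting, in a naive coordinate sense) the $\gamma_n$ to $\ker\mathcal{L}_Q$, and write down the Christoffel symbols $\Gamma^k_{ij}(Q)$ via the usual formula in terms of the partial derivatives of $g_{ij}(Q)=g_Q(\gamma_i,\gamma_j)$ and the inverse metric $g^{ij}(Q)$. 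The geodesic equation then reads $\ddot c^k + \sum_{i,j}\Gamma^k_{ij}(c)\,\dot c^i\dot c^j = 0$, a second-order ODE on the Hilbert space $\ell^2$ of coefficient sequences, equivalently a first-order system on $\ell^2\times\ell^2$.

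The heart of the argument is to show that the vector field defining this ODE is locally Lipschitz (indeed real-analytic, given the analyticity of $\mathcal{N}$ and of $A\mapsto\mathcal{L}_A$), and this is exactly where conditions I and II of Definition~\ref{suit} enter. Condition II — the uniform lower bound $\alpha>0$ on the $L^2(\mu_A)$ norms — together with near-orthonormality of the transported frame for $Q$ close to $A$, guarantees that the Gram matrix $g_{ij}(Q)$ stays uniformly invertible (bounded below), so that $g^{ij}(Q)$ is a bounded operator on $\ell^2$ with norm controlled uniformly on a small ball $B_r(A)$. Condition I — the uniform $C^0$ and $L^2$ upper bounds $\beta>0$ — gives that the metric coefficients $g_{ij}(Q)$ and their derivatives (which involve integrals of products $\gamma_i\gamma_j$ and of the $\gamma_i$ against the variation of $\mu_Q$, all estimated in $C^0$ via the $\beta$-bound and the smooth dependence of $\mu_Q$ on $Q$) define bounded, Lipschitz maps from $B_r(A)$ into the appropriate spaces of sequences/matrices. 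Combining the two, $\Gamma^k_{ij}(Q)$ assembles into a bounded bilinear map $\ell^2\times\ell^2\to\ell^2$ depending Lipschitz-continuously (analytically) on $Q\in B_r(A)$, so the quadratic vector field $(c,v)\mapsto(v,-\Gamma(c)(v,v))$ on $B_r(A)\times\ell^2$ is locally Lipschitz.

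With this in hand, the Cauchy–Lipschitz (Picard–Lindelöf) theorem in Banach spaces yields, for each $Q\in B_r(A)$ and each unit $X\in T_Q\mathcal{N}$, a unique solution $\gamma_X:(-\epsilon,\epsilon)\to B_r(A)$ with $\gamma_X(0)=Q$, $\gamma_X'(0)=X$; shrinking $r$ if necessary keeps the solution inside the ball, and the existence time $\epsilon$ depends on the Lipschitz constants, hence on $A$ and $Q$. I would also check, as is standard, that this solution is genuinely a geodesic for $g$ — i.e. that it has zero covariant acceleration for the Levi-Civita connection of $g$ — by verifying that the $\Gamma^k_{ij}$ written above are indeed the Christoffel symbols of $g$; this is where the symmetry and metric-compatibility identities are used, and these hold formally exactly as in finite dimensions once boundedness legitimizes the manipulations. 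The main obstacle, and the place where all the analytic work concentrates, is precisely the uniform invertibility of the Gram operator together with the boundedness and Lipschitz estimates on $\Gamma$ — these are what fail for a general basis and are rescued by the Fourier-like hypothesis.

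For the final assertion, it remains to produce a Fourier-like basis when $M=\{0,1\}^{\mathbb{N}}$ and $T=\sigma$. Here I would use the explicit description of $\ker\mathcal{L}_A$ for a normalized Hölder potential on the full $2$-shift: functions of the form $\psi - (\mathcal{L}_A\psi)\circ\sigma$ are in the kernel, and more usefully one has an explicit orthogonal family built from the two inverse branches — e.g. the Haar-type / martingale-difference functions indexed by finite words, suitably normalized against $\mu_A$ — which span a dense subspace of $\ker\mathcal{L}_A$ and whose $C^0$ and $L^2(\mu_A)$ norms are pinned between two positive constants because the Jacobian $J_\mu$ takes values in a compact subinterval of $(0,1)$ (Hölder continuity plus positivity bounds the distortion uniformly). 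Orthonormalizing within each finite block keeps the bounds, and one invokes the construction detailed in Appendix Subsection~\ref{aqui1}; the boundedness of $A$ and its Hölder norm make constants $\alpha,\beta$ explicit. This verifies the hypothesis of the theorem in that case and completes the proof.
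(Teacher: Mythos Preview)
Your overall shape is reasonable, and the final paragraph on constructing the Fourier-like basis on $\{0,1\}^{\mathbb N}$ matches what the paper does in its appendix. But the core analytic step contains a real gap. You assert that the Christoffel symbols ``assemble into a bounded bilinear map $\ell^2\times\ell^2\to\ell^2$'' because of the uniform $C^0$ bound $\beta$ on the basis elements. That inference fails: the Fourier-like hypothesis gives you a uniform bound on each \emph{entry} $\Gamma^k_{ij}$ (this is exactly what the paper establishes in its Lemma on bounded coefficients, via $\lvert\int X_iX_jX_k\,d\mu\rvert\le\rho^3$), but uniform bounds on entries do not yield a bounded bilinear form on $\ell^2$. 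Concretely, for $v,w\in\ell^2$ the functions $V=\sum v^i\gamma_i$ and $W=\sum w^j\gamma_j$ are only in $L^2(\mu)$, so $VW$ is only in $L^1(\mu)$, and there is no reason for its sequence of coefficients $\bigl(\int VW\,\gamma_k\,d\mu\bigr)_k$ to lie in $\ell^2$. The same issue bites the inverse-metric factor: a matrix close to the identity in the sup-of-entries norm need not be a bounded operator on $\ell^2$ in infinite dimensions. So Picard--Lindel\"of on $\ell^2\times\ell^2$ is not justified from the hypotheses you invoke. A related problem is that the manifold $\mathcal N$ is modeled on the Banach space of H\"older functions, not on $\ell^2$; you never address why a solution in $\ell^2$ coordinates would remain H\"older.

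The paper sidesteps exactly this obstacle by a finite-dimensional approximation scheme rather than a direct Banach-space ODE. It uses the projection $\Pi$ to build $m$-dimensional submanifolds $S_m$, writes the geodesic equation there (avoiding Christoffel symbols in favor of the identity $X\langle X,X_n\rangle=\langle X,[X,X_n]\rangle$), and uses the uniform entrywise bounds only to get \emph{uniform-in-$m$} estimates on the finite systems $\Sigma_m$. Existence and uniqueness in each $S_m$ then come from a bespoke Picard theorem for analytic curves into $\mathrm{Hol}_{C,\alpha}(\Sigma)$, and the passage $m\to\infty$ is handled by an Arzel\`a--Ascoli argument in the space of analytic curves of H\"older functions (not in $\ell^2$), after which one checks that the limit curve satisfies $\langle\nabla_XX,v_k\rangle=0$ for every $k$. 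If you want to repair your route, you would need either decay of the coefficients $\Gamma^k_{ij}$ in $k$ (which the Fourier-like condition does not provide) or to move the whole argument into the H\"older Banach space and prove Lipschitz estimates there; the paper effectively does the latter via approximation and compactness.
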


Subsection \ref{marbas1} shows the existence of an explicit  Fourier-type Hilbert basis for the kernel of the Ruelle operator in the case of Markov probabilities. The functions on this basis are constant in cylinder sets.
A result of independent interest is the existence of a Fourier-like basis for the
space $L^2 (\mu_A)$ which is the purpose of Subsection \ref{ger}.

In Section \ref{aqui2} we give the expression of the geodesic system of differential equation in some special coordinates $(r,s)\in(0,1)\times (0,1)$, for the two dimensional surface of Markov probabilities associated to two by two  row stochastic matrices
$$P=   \left(
\begin{array}{cc}
r & 1-r\\
1-s  & s
\end{array}\right).$$
We will exhibit two pictures showing geodesics paths on $(0,1)\times (0,1)$.

\medskip

Secondly, in Section \ref{DIP}  we deal with a different kind of calculus of variations problem in $\mathcal{N}$:  Information Projections for equilibrium probabilities. This problem is relevant in the context of Fisher information Theory, which holds for  probabilities that may not be invariant by any dynamical system. Our main object of study is the so-called KL-divergence, which is somehow considered a sort of distance between probabilities. We consider the KL-divergence for probabilities on $\mathcal{N}$ (which are all dynamically invariant). Let us comment briefly on some basic definitions and properties of this functional.

The notation in \cite{GKLM} was:  $\mu_A \in \mathcal{N}$  denotes the equilibrium probability associated with the normalized potential $A$. The function $J$, such that $J=e^A$, is called the Jacobian of the invariant probability $\mu_A$. Here, in Section \ref{DIP} it is more natural to use the notation: given the Jacobian $J$, we denote by $\mu_J$ the equilibrium probability for the potential  $A=\log J.$

The KL-divergence $D_{KL}(\mu_0,\mu_1)$ (also known as relative entropy $h(\mu,\mu_1)$) is defined for a pair of probabilities $\mu_0,\mu_1$.

Given two Jacobians $J_0$ and $J_1$ and the equilibrium probabilities $\mu_0:=\mu_{ J_0}\in \mathcal{N}$ and $\mu_1:=\mu_{J_1}\in \mathcal{N}$,
its Kullback-Leibler  divergence (or relative entropy) is given by
\begin{equation} \label{a1}
 D_{KL}(\mu_0\,|\, \mu_1) =\int (\log J_0 - \log J_1)\,\, d \mu_0\geq 0.\,\,\,
\end{equation}

 $D_{KL}$  is not a metric in the space of probabilities, however, it provides a measure of the proximity between $\mu_1$ and $\mu_2$.   If $D_{KL}(\mu_1,\mu_2)=0$, then $\mu_1=\mu_2$. A natural problem in information theory is the following: given a fixed probability $\mu_1$, to find the probability $\mu_2$ in a convex set of probabilities (not containing $\mu_1$) which minimizes $D_{KL}(\mu_1,\mu_2)$. This kind of minimization problem is one of the main issues in information projections.

A detailed study of the KL-divergence for equilibrium probabilities is described in \cite{LR}, \cite{LM2}, \cite{Cha1}, \cite{Cha2} and \cite{Cha}.

We analyze in Section \ref{DIP} in  the present article the information projection problem in the dynamical setting introduced in \cite{LR} based on Thermodynamics formalism. In this case, all probabilities are ergodic and they are all singular with respect to each other. In this setting, the basic tools of the calculus of Thermodynamics formalism as developed in \cite{GKLM} apply to the study of both the Riemannian geometry of $\mathcal{N}$, as shown in \cite{LR1} for instance, and to the study of the KL-divergence.

Moreover, the distance in $\mathcal{N}$ endowed with the $L^{2}$ metric and the calculus of variations of the KL-divergence, though quite different in nature, seem to be linked by the so-called  Pinsker inequality.

The Pinsker inequality (see \cite{Pre}) claims  that: if $p,q$ are two probabilities on a measurable space, then
$$ \delta(p,q)^2< \frac{1}{2} D_{KL}(p,q),$$
where $\delta(p,q)$ is the total variation distance.

On the other hand if  $p $ and $q $ are probability densities both supported on an interval $[0,1]$, then the Gy\"orfi inequality claims that
$$ D_{KL} (p,q) \leq \frac{1}{\inf_{x\in [0,1]} q(x)} ||p-q||_2^2.$$

So the KL-divergence is related with the $L^{2}$ distance between probabilities, and hence it is somehow related to the distance in $\mathcal{N}$. Therefore, it seems natural to us to try to investigate questions related to the minimization of the $D_{KL}$ divergence of equilibrium probabilities in parallel to the study of geodesics in $\mathcal{N}$. The second part of our paper can be considered as a first  attempt to tackle the subject.

Let us describe more precisely the main results concerning KL-divergence.

Denote by
$\Omega =\{1,2,...,d\}^\mathbb{N}$ the compact symbolic space with finite symbols.
The  Jacobian $J=e^A:\Omega \to(0,1)$ has the following properties: $J$ is a positive H\"older function such that
$\op{L}_{\log J}(1)=1,$ where $\op{L}_{\log J}$ is the Ruelle operator for the potential $\log J.$ To each Jacobian $J$ is associated  a unique shift   invariant probability $\mu=\mu_{\log J}$ (denoted  by $\mu_J$ for simplification, as mentioned before), such that,
$\op{L}_{\log J}^*(\mu_J)= \mu_J$, where  $\op{L}_{\log J}^*$ is the dual  of the Ruelle operator $\op{L}_{\log J}$. In our notation for Section \ref{DIP}
$\mu_J$ is the equilibrium probability for $\log J$.

Note that a convex combination of two distinct equilibrium probabilities $\mu_{J_0}$ and $\mu_{J_1}$ is not of the form $\mu_J$, for some  H\"older Jacobian $J$.

Given H\"older Jacobians $J_0$ and $ \tilde{J}_1 $, $\tilde{J}_1 \neq J_0$,
consider the  H\"older Jacobian $\mathfrak{J}_\lambda$, $\lambda\in [0,1]$, such that, $ \mathfrak{J}_\lambda = \lambda \tilde{J}_1  + (1- \lambda) J_0.$
In this case $ \mathfrak{J}_1=\tilde{J}_1$.

We denote by $\mu_{\mathfrak{J}_\lambda}$ the equilibrium probability for $\log \mathfrak{J}_\lambda$. The probability $\mu_0$ has Jacobian $J_0=\mathfrak{J}_0$ and $\mu_{\mathfrak{J}_1}$ has Jacobian $\tilde{J}_1=\mathfrak{J}_1$.
Given a fixed $J_1$ (corresponding to $\mu_1=\mu_{J_1}$), we are interested in estimating the derivatives of the Kullback-Liebler divergence (also known as relative entropy)
\smallskip

$\,\,\,\,\,\,\,\,\,\,\,\,\frac{d \,}{d \lambda} D_{KL}(\mu_{\mathfrak{J}_\lambda}, \mu_{J_1})|_{\lambda=0} =\frac{d}{d \lambda}[\,\int \log \mathfrak{J}_\lambda d \mu_{\mathfrak{J}_\lambda} - \int \log J_1 d
\mu_{\mathfrak{J}_\lambda}\,]\,|_{\lambda=0},$

and

$\,\,\,\,\,\,\,\,\,\,\,\,\frac{d \,}{d \lambda} D_{KL}(\mu_{J_1}, \mu_{\mathfrak{J}_\lambda})|_{\lambda=0} =\frac{d}{d \lambda}[\,\int \log J_1 d \mu_{J_1} - \int \log \mathfrak{J}_\lambda d
\mu_{J_1}\,]\,|_{\lambda=0}.$
\smallskip

This class of problems is related to the Pythagorean inequality
\smallskip

$\,\,\,\,\,\,\,\,\,\,\,\,\,\,\,\,\,\,\,\,\,\,\,\,D_{KL}(\mu_{\tilde{J}_1 }, \mu_{J_1}) \geq D_{KL}(\mu_{\tilde{J}_1 }, \mu_{J_0}) +  D_{KL}(\mu_{J_0}, \mu_{J_1}).$
\smallskip

One of our results in Section \ref{DIP}   is the computation:
\begin{proposition} \label{er23774}
\begin{equation} \label{a49} \frac{d}{d \lambda} D_{KL}(\mu_1, \mu_{\mathfrak{J}_\lambda})|_{\lambda=0}=\int (1-  \frac{\tilde{J}_1 }{ J_0}) d \mu_1.
\end{equation}
\end{proposition}

Given a {\bf convex set $\Theta_1$} of Jacobians $\tilde{J}$ and  $J_1\notin \Theta_1$, we consider the  related  problem:
find $\tilde{J}=J_0\in \Theta_1$  s. t. $D_{KL}(  \mu_{J_0}, \mu_{J_1})=\min_{\tilde{J}\in \Theta_1} D_{KL}(  \mu_{\tilde{J}}, \mu_{J_1}).$
We also consider:  find $\tilde{J}=J_0\in \Theta_1$ s. t. $D_{KL}(  \mu_{J_1}, \mu_{J_0})=\min_{\tilde{J}\in \Theta_1} D_{KL}(  \mu_{J_1}, \mu_{\tilde{J}}).$

The Second Law of Thermodynamics  corresponds to the case (see \cite{LR})
$$\frac{d \,}{d \lambda} D_{KL}(\mu_{\mathfrak{J}_\lambda}, \mu_{J_1})|_{\lambda=0}>0.$$

 This means that the relative entropy $ D_{KL}(\mu_{\mathfrak{J}_\lambda}, \mu_{J_1})$ is increasing infinitesimally  close to $\lambda=0.$ Note that we are not really in the realm of Thermodynamics of gases.

\medskip

Another issue is the study of  the probabilities $\mu^\lambda$ that are equilibrium for the family of potentials
\begin{equation} \label{a10102}  \lambda \log (\tilde{J}_1 ) + (1- \lambda) \log (J_0),
\end{equation}
$\lambda\in [0,1]$. We denote by $\mathfrak{J}^\lambda$   the Jacobian of the equilibrium probability $\mu^\lambda$ for the potential $\lambda \log (\tilde{J}_1 ) + (1- \lambda) \log (J_0)$ ($\mathfrak{J}_\lambda$ is different from $\mathfrak{J}^\lambda$).
The probability $\mu^1$ has  Jacobian $\tilde{J}_1=\mathfrak{J}^1 $ and the probability $\mu^0$ has Jacobian $J_0= \mathfrak{J}^0$.

We will also compute in Section \ref{DIP}:

\begin{proposition} Given $\mu_1=\mu_{J_1}$
\begin{equation} \label{chili0}\frac{d}{d \lambda}|_{\lambda=0} D_{K L} (\mu_1,\mu^\lambda) = - \int (\log \tilde{J}_1 - \log J_0) d \mu_1 + \int (\log \tilde{J}_1 - \log J_0) \,d \mu^0.
\end{equation}

The inequality $0\leq \frac{d}{d \lambda}|_{\lambda=0}   D_{K L}      (\mu_1,\mu^\lambda) $ is  related to the Pythagorean inequality:

 $$ D_{K L}      (\mu_1,\mu^0)  +  D_{K L}      (\mu^0,\mu_{ \tilde{J}_1})    \leq    D_{K L}      (\mu_1,\mu_{ \tilde{J}_1}). $$
\end{proposition}

We also describe what is  the  dynamical Bregman divergence for two probabilities in $\mathcal{N}$ (see expression \eqref{equio3}).


\section{Preliminaries for the study of geodesics in $\mathcal{N}$}

\subsection{Basics of Riemannian Geometry}

Let us start by introducing some basic notions of Riemannian geometry. Given an infinite dimensional  $C^{\infty}$ manifold $(\mathfrak{M},g)$ equipped with a smooth Riemannian metric $g$, let $T\,\mathfrak{M}$ be the tangent bundle and
$T_{1} \mathfrak{M}$ be the set of unit norm tangent vectors of $(\mathfrak{M},g)$,  known as  the unit tangent bundle. Let $\chi(\mathfrak{M})$ be the set of $C^{\infty}$ vector fields of $\mathfrak{M}$.

Given a smooth function $f :\mathcal{N} \longrightarrow \mathbb{R}$, the derivative of $f$ with respect to a vector field $X \in \chi (\mathcal{N} )$ will be denoted by $X(f)$.
The Lie bracket of two vector fields $X, Y \in \chi(\mathcal{N} )$ is the vector field whose action on the set of functions $f: \mathcal{N}  \longrightarrow \mathbb{R}$ is
given by $[X,Y](f) = X(Y(f)) - Y(X(f))$.

The \textit{Levi-Civita connection} of $(\mathcal{N} ,g)$, $\nabla : \chi(\mathcal{N} )\times \chi(\mathcal{N} ) \longrightarrow \chi(\mathcal{N} )$, with notation $\nabla(X,Y) = \nabla_{X}Y$, is the affine operator
characterized by the following properties:
\begin{enumerate}
\item Compatibility with the metric $g$:
$$ Xg(Y,Z) = g(\nabla_{X}Y, Z) + g(Y, \nabla_{X}Z) $$
for every triple of vector fields $X, Y, Z$.
\item Absence of torsion: $$ \nabla_{X}Y - \nabla_{Y}X = [X,Y].$$
\item For every smooth scalar function $f$ and vector fields $X,Y \in \chi(\mathcal{N} )$ we have
\begin{itemize}
\item $ \nabla_{fX}Y = f\nabla_{X}Y$,
\item Leibniz rule: $ \nabla_{X}(fY) = X(f)Y + f\nabla_{X}Y$.
\end{itemize}
\end{enumerate}

The expression of $\nabla_{X}Y$ can be obtained explicitly from the expression of the Riemannian metric, in dual form. Namely, given two vector fields $X, Y \in \chi(\mathcal{N} )$,
and $Z \in \chi(\mathcal{N} )$ we have
\begin{eqnarray*}
g(\nabla_{X}Y, Z) & = & \frac{1}{2}(Xg(Y,Z) + Yg(Z, X) -Zg(X,Y) \\
& - & g([X,Z], Y) -g([Y,Z],X) -g([X,Y], Z)) .
\end{eqnarray*}

A smooth curve $\gamma(t) \subset \mathcal{N}$, for $t$ in an interval $I \subset \mathbb{R}$, is called a \textit{geodesic} if it satisfies
$$ \nabla_{\gamma'(t) } \gamma'(t) = 0 $$
for every $t \in I$. The properties of the Levi-Civita connection imply that geodesics have constant speed (see Subsection 2.7), so we can restrict ourselves to $T_{1}\mathcal{N}$ to study geodesics. In finite dimensional Riemannian manifolds, geodesics are solutions of a system of second order differential equations in the manifold. This follows from taking coordinates and writing explicitly the geodesic condition in terms of the coordinate vector fields. For infinite dimensional Riemannian manifolds, a more analytic approach is needed. For Riemannian manifolds which are complete as metric spaces, the so-called Palais-Smale method is often applied to prove the existence of geodesics (see \cite{Kli} for instance). We do not know if the manifold $\mathcal{N}$ is complete  when endowed with the $L^{2}$ Riemannian metric. So we shall adopt an alternative method to deal with the existence of geodesics based strongly on the analytic properties of $\mathcal{N}$.

\subsection{Preliminaries of the analytic structure of the set of normalized potentials} \label{Prel}

\smallskip

We recall for the reader the basic results that we will need later following
the content of the  first sections of \cite{LR1}.

\begin{definition}  Let $ (X, | .|)$ and  $(Y, |.|)$  Banach spaces and $V$ an open subset of $ X.$
Given $k\in  \mathbb{N}$, a function $F : V\to Y$ is called $k$-differentiable in $x $, if for each $j=1, ..., k$,
there exists a $j$-linear bounded transformation
$$D^j F(x) : \underbrace{X \times X \times ... \times X}_j  \to Y,$$
such that,
$$D^{j -1}F(x + v_j )(v_1, ..., v_{j-1}) \,-\, D^{j-1}F(x)(v_1, ..., v_{j-1}) = D^jF(x)(v_1, ..., v_j ) + o_ j (v_j ),\,\,$$
where
$$\,\, o_j
: X \to Y, \,\,\text{ satisfies,}\,\,  \lim_{v\to 0}
\frac{|o_j (v)|_Y}{
|v|_X
}= 0
$$

By definition $F$ has derivatives of all orders in $V$, if for any  $x\in V$ and any $k\in  \mathbb{N}$, the function $F$ is
$k$-differentiable in $x$.

\end{definition}

\begin{definition} Let $X, Y$ be Banach  spaces and $V$ an open subset of $X$. A function
$F : V \to X$ is called analytic on $V$ when $F$ has derivatives of all orders in $V$, and for each
$x \in V$ there exists an open neighborhood $V_x$ of $x$ in $V$, such that, for all $v\in  V_x$, we have that
$$
F(x + v) \,-\, F(x) = \sum_{j=1}^\infty\,
\frac{1}{n !}\,\,
D^j F(x)v^j,$$
where
$D^j F(x)v^j = D^j F(x)(v, . . . , v) $ and $ D_j F(x) $ is the $j $-th derivative of $F$ in $x$.

\end{definition}

Above we use the notation of section 3.2 in \cite{SiSS}.

\medskip

$\mathcal{N}$ can be expressed locally in coordinates via analytic charts (see \cite{GKLM}).

\subsection{Fundamental formulae from  Thermodynamic Formalism} \label{yu}

\noindent

\smallskip

For a fixed $\alpha>0$ we denote by $\text{Hol}$ the set of $\alpha$-H\"older functions on $M$.
For a H\"older potential $B: M \to \mathbb{R}$ in $\text{Hol}$ we define the Ruelle operator (sometimes called transfer operator) - which acts on H\"older functions $f: M \longrightarrow \mathbb{R}$ -   by the law
\begin{equation} \label{K37}f \to  \op{L}_B f(x) = \sum_{T(y) = x} e^{B(y)} f(y).\end{equation}

Given a potential $B \in \text{Hol} $ and  the associated Ruelle operator $\op{L}_B$, consider
the corresponding main eigenvalue $\lambda_{B}$ and eigenfunction $h_B$ (see \cite{PP} for the proof of their existence).
 As mentioned before,  $\mu_B$ denotes the equilibrium probability for  the topological pressure $P(B)$.
We say that the potential $B$ is normalized if  $ \op{L}_B (1)=1.$ When $B$ is normalized the eigenvalue is $1$ and the eigenfunction is equal to $1$.  In this case, using the notation of \cite{GKLM} (as mentioned before) we get  $\op{L}_B^* (\mu_B)=\mu_B$. In the present section is more natural and didactic to use the notation of \cite{GKLM}.

The function
\begin{equation} \label{K20} \Pi (B) = B + \log(h_{B}) - \log(h_{B}(T)) -\log(\lambda_{B}) \end{equation}
describes  the projection of the space of potentials $B$ on $\text{Hol}$  onto the analytic manifold of normalized potentials $\mathcal{N}$.

The potential $\Pi (B)$ is normalized.

We identify below $T_A \mathcal{N}$ with the affine subspace $\{A + X\,:\, X \in T_A \mathcal{N}\}.$

The function $\Pi$ is analytic  on $B$ (see \cite{PP} or \cite{GKLM}) and therefore has first and second derivatives.  Given the potential $B$, then the map $D_B \Pi : T_{B}\mathcal{N} \longrightarrow T_{\Pi(B)}\mathcal{N} $ given by
$$ D_B \Pi (X) = \frac{\partial}{\partial t}(\Pi(B+ tX))|_{t=0} $$
should be considered as a linear map from Hol to itself (with the H\"{o}lder norm on Hol). Moreover, the second derivative $D^2_B \Pi$ should be interpreted as a bilinear form from Hol $\times$ Hol to Hol, and is given by

$$ D^2_B \Pi(X,Y)  = \frac{\partial^2}{\partial t \partial s}(\Pi(B+ tX +sY))|_{t=s=0}. $$

We denote by $||A||_\alpha$ the $\alpha$-H\"{o}lder norm of an $\alpha$-H\"{o}lder function $A$.

We would like to study the geometry of the
projection $\Pi$ restricted to the tangent space $T_{A}\mathcal{N}$ into the manifold $\mathcal{N}$
(namely, to get bounds for its first and second derivatives with respect to the potential viewed as a variable) for a given normalized potential $A$.

For an H\"older normalized potential $A$
the space $T_{A}\mathcal{N}$ is a linear subspace of functions (the set of H\"older functions on the kernel of the Ruelle operator $\op{L}_A$) and the derivative map $D\, \Pi$ is analytic when restricted to it.

We denote by $E_0 =E_0^A$ the set of H\"{o}lder functions $g$,  such that $\int g d \mu_A =0.$ Note that $E_0^A$ is contained in $T_{A} \mathcal{N}.$

{The claims of the next Lemma are  taken from \cite{LR1}} and they are based mainly on results of \cite{GKLM} (see also  \cite{SiSS}, \cite{BCV}).

\begin{lemma} \label{derivative-bounds0}
Let $\Lambda : \operatorname{Hol} \longrightarrow \mathbb{R}$,
$H : \operatorname{Hol}\longrightarrow \operatorname{Hol}$ be given, respectively,  by $ \Lambda (B) = \lambda_{B},$ $ H(B) = h_{B}$.
Then we have
\begin{enumerate}
\item The maps $\Lambda$, $H$, and $A \longrightarrow \mu_{A}$  are analytic.
\item For a normalized $B$ we get that $D_{B}\log(\Lambda) (\psi) = \int \psi d\mu_{B},$
\item $ D^{2}_{B}\log(\Lambda) (\eta, \psi) = \int \eta \psi d\mu_{B},$
where $\psi , \eta $ are at $T_{B}\mathcal{N}$.
\item For any H\"{o}lder potential $A$ we have
$$D_{A}H(X) = h_{A} \int (\,[\,(I - \op{L}_{T,A}|_{E_0^A})^{-1}\,( 1- h_{A}) \,].\,X)\, d\mu_{A} .$$
If $A$ is normalized, we have $D_{A}H =0$,
\item If $A$ is a normalized potential, then for every function $X \in T_{A}\mathcal{N}$ we have
\begin{itemize}
\item $\int X d\mu_{A} =0$.
\item $D_{A}\Pi(X) = X$.
\end{itemize}

\end{enumerate}
\end{lemma}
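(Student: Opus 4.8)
The plan is to base everything on the analytic perturbation theory of the Ruelle operator, which already underlies the cited references.

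\textbf{Analyticity (item 1).} First I would observe that $B \mapsto \mathcal{L}_B$ is an analytic map from $\text{Hol}$ into the bounded operators on $\text{Hol}$: writing $\mathcal{L}_B f(x) = \sum_{T(y)=x} e^{B(y)} f(y)$, the dependence on $B$ enters only through the analytic function $B \mapsto e^{B}$, and the finite-to-one structure of $T$ keeps the operator bounded with analytic coefficients. Since $T$ is expanding and $B$ is H\"older, $\mathcal{L}_B$ has a spectral gap: the leading eigenvalue $\lambda_B$ is simple and isolated, with strictly positive eigenfunction $h_B$ and dual eigenmeasure $\nu_B$ obeying $\mathcal{L}_B^{*}\nu_B = \lambda_B \nu_B$. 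Applying Kato's analytic perturbation theory to the isolated simple eigenvalue $\lambda_B$, both $\Lambda(B)=\lambda_B$ and the associated spectral projection are analytic; fixing the normalization $\int h_B \, d\nu_B = 1$ makes $H(B)=h_B$ analytic, and then $\mu_B = h_B \, \nu_B$ shows $A \mapsto \mu_A$ is analytic.

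\textbf{First derivatives (items 2 and 4).} The engine for the remaining items is differentiation of the eigenvalue identity $\mathcal{L}_B h_B = \lambda_B h_B$, together with the elementary rule $(D_B\mathcal{L})(X)f = \mathcal{L}_B(Xf)$, which follows directly from the formula for $\mathcal{L}_B$. Differentiating in direction $X$ gives
\[ \mathcal{L}_B(X h_B) + \mathcal{L}_B\big(D_B H(X)\big) = D_B\Lambda(X)\, h_B + \lambda_B\, D_B H(X). \]
For item (2) I specialize to normalized $B$ (so $h_B=1$, $\lambda_B=1$, $\mu_B=\nu_B$), integrate against $\mu_B$, and use the duality $\int \mathcal{L}_B g \, d\mu_B = \int g \, d\mu_B$; the terms involving $D_B H(X)$ cancel and one is left with $D_B\log\Lambda(X) = \int X \, d\mu_B$. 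For item (4), rearranging the displayed identity gives $(\lambda_B I - \mathcal{L}_B)\, D_B H(X) = \mathcal{L}_B(X h_B) - D_B\Lambda(X)\, h_B$; the spectral gap makes $\lambda_B I - \mathcal{L}_B$ invertible on the complement of the leading eigenline, which in normalized coordinates is exactly $E_0^A$ where $I - \mathcal{L}_{T,A}$ is a contraction, and inverting there (with the normalization of $h_B$ pinning down the eigenline component) produces the stated resolvent formula. When $A$ is normalized, $h_A = 1$ and the source term $1 - h_A$ vanishes, so $D_A H = 0$.

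\textbf{The projection and item (5).} For $X \in T_A\mathcal{N} = \ker\mathcal{L}_A$ with $A$ normalized, I derive $\int X \, d\mu_A = 0$ directly from the invariance duality: $\int X \, d\mu_A = \int \mathcal{L}_A X \, d\mu_A = 0$. Then, differentiating the projection $\Pi(B) = B + \log h_B - \log(h_B\circ T) - \log\lambda_B$ at the normalized $A$ in direction $X$ and substituting $h_A = 1$, $\lambda_A = 1$, $D_A H(X)=0$ (item 4) and $D_A\Lambda(X) = \int X \, d\mu_A = 0$ (item 2) collapses every correction term, leaving $D_A\Pi(X) = X$.

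\textbf{Second derivative (item 3) and the main obstacle.} For item (3) I differentiate the first-derivative formula once more; since this brings in $D_B\mu_B$ through the perturbation formula, it reproduces the classical fact that $D^2_B\log\Lambda(\eta,\psi)$ equals the asymptotic-variance bilinear form $\int \eta\psi\,d\mu_B + \sum_{n\geq 1}\big(\int \eta(\psi\circ T^n)\,d\mu_B + \int \psi(\eta\circ T^n)\,d\mu_B\big)$ (the means vanish by item 5). The decisive simplification --- and the only place the tangency hypothesis $\eta,\psi\in T_B\mathcal{N}=\ker\mathcal{L}_B$ enters --- is that every cross term vanishes: iterating $\mathcal{L}_B\big((g\circ T) f\big) = g\,\mathcal{L}_B f$ gives $\mathcal{L}_B^n\big((\psi\circ T^n)\eta\big) = \psi\,\mathcal{L}_B^n\eta = 0$ for $n\geq 1$, so $\int \eta(\psi\circ T^n)\,d\mu_B = \int \mathcal{L}_B^n((\psi\circ T^n)\eta)\,d\mu_B = 0$, and by the symmetric computation the series collapses to $\int \eta\psi\,d\mu_B$. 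I expect the real difficulty to lie not in these formal manipulations but in their rigorous justification in the H\"older Banach setting: that the spectral gap and the perturbation series hold, that differentiation commutes with the infinite variance series, and above all that the implicitly defined $D_A H$ coincides with the explicit $(I-\mathcal{L}_{T,A}|_{E_0^A})^{-1}$ formula once the eigenfunction normalization is correctly accounted for.
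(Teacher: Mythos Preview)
Your proposal is correct and follows the standard route (analyticity of $B\mapsto\mathcal{L}_B$, spectral gap, Kato-type perturbation, differentiation of the eigenvalue identity, and the kernel pull-out $\mathcal{L}_B((g\circ T)f)=g\,\mathcal{L}_B f$ to kill the cross terms in the variance series). The paper itself does not supply a proof of this lemma: it simply states that the claims are taken from \cite{LR1} and rest on results of \cite{GKLM} (see also \cite{SiSS}, \cite{BCV}), so there is nothing to compare at the level of argument --- your sketch is precisely the kind of derivation those references carry out.
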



The law that takes an H\"older potential $B$ to its normalization $A=\Pi (B)$ is differentiable according to section 2.2 in \cite{GKLM}.

As a consequence of the analytic properties of the functions $\Lambda, H$ we have the following:

\begin{proposition} \label{dpi}
Given a normalized potential $A \in \mathcal{N}$ and $\delta>0$ there exists $r >0$, such that, for every H\"older continuous function
$B$ in the ball $B_{r}(A)$ of radius $r$ around $A$, the norms of $D_{B}\Pi$ and $D^{2}_{B}\Pi$
restricted to the functions in $T_{A}\mathcal{N}$ satisfy
$$ \parallel (D_{B}\Pi)\mid_{T_{A}\mathcal{N}} - I \parallel \leq  \delta$$
$$ \parallel (D^{2}_{B}\Pi)\mid_{T_{A}\mathcal{N}} + I \parallel \leq \delta.$$

\end{proposition}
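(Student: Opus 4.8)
The plan is to establish the two norm estimates by exploiting the analyticity of $\Pi$ on the space of H\"older potentials, combined with the identities recorded in Lemma \ref{derivative-bounds0}. The starting point is that when $A$ is a normalized potential, part (5) of that lemma gives $D_A\Pi(X) = X$ for every $X \in T_A\mathcal{N}$; in other words, $(D_A\Pi)\mid_{T_A\mathcal{N}} = I$. So the first estimate $\parallel (D_B\Pi)\mid_{T_A\mathcal{N}} - I\parallel \le \delta$ is simply a continuity statement: since $B \mapsto D_B\Pi$ is analytic (hence $C^1$, hence continuous) as a map from $\text{Hol}$ into the Banach space of bounded linear operators on $\text{Hol}$, and since it equals $I$ at $B = A$, there is a radius $r_1 > 0$ such that $\parallel D_B\Pi - D_A\Pi\parallel \le \delta$ for $\parallel B - A\parallel_\alpha < r_1$. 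Restricting the domain to $T_A\mathcal{N}$ only decreases the operator norm of the difference, so the first inequality follows on $B_{r_1}(A)$.

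For the second estimate, I would first compute $D_A^2\Pi(X,Y)$ for $X,Y \in T_A\mathcal{N}$ at the normalized base point $A$, and show it equals $-\int XY\,d\mu_A$, i.e. $(D_A^2\Pi)\mid_{T_A\mathcal{N}} = -I$ where $I$ here denotes the symmetric bilinear form $(X,Y)\mapsto \int XY\,d\mu_A = g_A(X,Y)$. To get this, differentiate the defining formula \eqref{K20}, $\Pi(B) = B + \log h_B - \log h_B(T) - \log \lambda_B$, twice in $B$ at $B = A$. The term $B$ contributes nothing to the second derivative. The terms involving $\log h_B$: by part (4), $D_A H = 0$ at a normalized potential, and $h_A = 1$, so $D_A^2\log h_B$ reduces to $D_A^2 H(X,Y)$ up to the vanishing first-order piece — one needs the second derivative $D_A^2 H$, which can be read off from the same resolvent formula in part (4) by differentiating once more; the key point is that after pairing against $\mu_A$ and using $\int X\,d\mu_A = \int Y\,d\mu_A = 0$ for tangent vectors, the $\log h_B$ and $\log h_B(T)$ contributions either cancel against each other or vanish. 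Finally, by part (3), $D_A^2\log\lambda_B(X,Y) = \int XY\,d\mu_A$, and this enters \eqref{K20} with a minus sign, producing $(D_A^2\Pi)\mid_{T_A\mathcal{N}} = -g_A$. Since $g_A$ is precisely the $L^2(\mu_A)$ inner product defining the Riemannian metric, this is the ``$-I$'' in the statement.

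Once $(D_A^2\Pi)\mid_{T_A\mathcal{N}} = -I$ is established, the second inequality is again pure continuity: the map $B \mapsto D_B^2\Pi$ is analytic, hence continuous, from $\text{Hol}$ into the Banach space of bounded symmetric bilinear forms on $\text{Hol}$, so there is $r_2 > 0$ with $\parallel D_B^2\Pi - D_A^2\Pi\parallel \le \delta$ for $\parallel B - A\parallel_\alpha < r_2$; restricting the arguments to $T_A\mathcal{N}$ can only shrink the norm of the difference, giving $\parallel (D_B^2\Pi)\mid_{T_A\mathcal{N}} + I\parallel \le \delta$. Taking $r = \min(r_1, r_2)$ finishes the proof.

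The main obstacle is the algebraic bookkeeping in the second-derivative computation at the normalized point — specifically, verifying that all the contributions from $\log h_B$ and $\log h_B(T)$ in \eqref{K20} drop out, leaving only the clean term $-\int XY\,d\mu_A$ coming from $-\log\lambda_B$. This requires differentiating the resolvent expression for $D_AH$ in Lemma \ref{derivative-bounds0}(4) once more, handling the $(I - \mathcal{L}_{T,A}\mid_{E_0^A})^{-1}$ factor carefully, and then crucially using that tangent vectors to $\mathcal{N}$ have zero $\mu_A$-integral (Lemma \ref{derivative-bounds0}(5)) so that terms linear in $X$ or $Y$ integrate to zero. Everything else is a soft consequence of analyticity and the observation that operator norms are monotone under restriction of the domain (or the arguments of a multilinear form).
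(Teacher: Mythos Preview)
Your approach matches the paper's: no detailed proof is given there, the proposition being introduced only as ``a consequence of the analytic properties of the functions $\Lambda, H$'' --- precisely your strategy of evaluating $D_A\Pi$ and $D_A^2\Pi$ at the normalized base point via Lemma \ref{derivative-bounds0} and then invoking continuity of $B \mapsto D_B\Pi$ and $B \mapsto D_B^2\Pi$ (which follows from analyticity of $\Pi$).

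One small point to watch: you identify $(D_A^2\Pi)\mid_{T_A\mathcal{N}}$ with the scalar-valued form $-g_A(X,Y) = -\int XY\,d\mu_A$, but $D_A^2\Pi(X,Y)$ takes values in $\text{Hol}$, not $\mathbb{R}$. Differentiating \eqref{K20} twice at $A$ (using $h_A = 1$ and $D_AH = 0$) actually gives
\[
D_A^2\Pi(X,Y) \;=\; u - u\circ T \;-\; \int XY\,d\mu_A,
\qquad u = D_A^2 H(X,Y),
\]
where the last term is a constant function and the coboundary $u - u\circ T$ need not vanish. The paper's own meaning of ``$I$'' in the bilinear context is equally loose, and the only downstream use of the second inequality (in Lemma \ref{bounded-coeff}) is to bound $\parallel D_B^2\Pi(e_i,e_n)\parallel_\infty \le 1+\delta$ --- for which it suffices that $D_A^2\Pi$ is a bounded bilinear map into $\text{Hol}$ and that $B\mapsto D_B^2\Pi$ is continuous, exactly what your argument delivers.
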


 In the above for linear operators we use the operator norm (in Hol we consider the sup norm) and for bilinear forms, we use also the sup norm (see section 2.3 in \cite{GKLM}).

\medskip

\subsection{On the Calculus of Thermodynamical formalism}

The following result proved in \cite{LR1} describes a  formula to calculate derivatives of integrals of vector fields. This rule will be important to estimate the coefficients of the first fundamental form of the Riemannian metric in $\mathcal{N}$ in order to deal with the problem of the existence of geodesics.

\begin{lemma} \label{Leibniz}
Let $A \in \mathcal{N}$ and let $\gamma: (-\epsilon, \epsilon) \longrightarrow \mathcal{N}$ be a smooth curve such that $\gamma(0) =A$. Let $X(t) = \gamma'(t)$, and let $Y$ be a smooth vector field tangent to $\mathcal{N}$ defined in an open neighborhood of $A$. Denote by $Y(t)= Y(\gamma(t))$. Then the derivative of $\int Y(t) d\mu_{\gamma(t)}$ with respect to the parameter $t$ is
$$ 	\frac{d}{dt} \int Y(t)d\mu_{\gamma(t)} = \int \frac{dY(t)}{dt}d\mu_{\gamma(t)} + \int Y(t)X(t)d\mu_{\gamma(t)} $$
for every $ t\in (-\epsilon, \epsilon)$.
\end{lemma}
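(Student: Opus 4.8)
The plan is to recognize the right-hand side as the sum of the two first-order contributions: one coming from moving the vector field $Y$ along $\gamma$ with the measure frozen, the other from moving the equilibrium probability $\mu_{\gamma(t)}$ with the function frozen. Concretely, I would fix $t_{0}\in(-\epsilon,\epsilon)$; after relabelling the parameter we may take $t_{0}=0$ and set $A=\gamma(0)$ (a normalized potential), $X=X(0)=\gamma'(0)\in T_{A}\mathcal{N}$, and $\psi=Y(0)=Y(\gamma(0))\in T_{A}\mathcal{N}$. Introduce the two-variable function $g(s,t)=\int Y(\gamma(s))\,d\mu_{\gamma(t)}$ near $(0,0)$; it is $C^{1}$ there because $t\mapsto\gamma(t)$ is smooth into $\mathrm{Hol}$, $Y$ is smooth, $B\mapsto\mu_{B}$ is analytic (item (1) of Lemma~\ref{derivative-bounds0}), and the relevant derivatives are uniformly bounded on a ball around $A$ (Proposition~\ref{dpi}). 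Then the chain rule applied to $t\mapsto g(t,t)$ gives $\frac{d}{dt}\big|_{0}\int Y(\gamma(t))\,d\mu_{\gamma(t)}=\partial_{s}g(0,0)+\partial_{t}g(0,0)$.

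For $\partial_{s}g(0,0)$ the measure $\mu_{A}$ is held fixed, and since $f\mapsto\int f\,d\mu_{A}$ is a bounded linear functional on $\mathrm{Hol}$ it commutes with differentiation of the Banach-space-valued curve $s\mapsto Y(\gamma(s))$; hence $\partial_{s}g(0,0)=\int\frac{dY(t)}{dt}\big|_{t=0}\,d\mu_{A}$, which is the first term in the statement.

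The core is $\partial_{t}g(0,0)=\frac{d}{dt}\big|_{t=0}\int\psi\,d\mu_{\gamma(t)}$, now with $\psi=Y(0)$ a fixed function. Here I would use item (2) of Lemma~\ref{derivative-bounds0}: each $\gamma(t)$ is normalized, so $\int\psi\,d\mu_{\gamma(t)}=D_{\gamma(t)}\log\Lambda(\psi)$. Differentiating the analytic map $B\mapsto D_{B}\log\Lambda(\psi)$ along $\gamma$ (using $\gamma'(0)=X$ and the definition of the second Fréchet derivative) yields $\partial_{t}g(0,0)=D^{2}_{A}\log\Lambda(X,\psi)$. Since \emph{both} $X$ and $\psi$ lie in $T_{A}\mathcal{N}$, item (3) of Lemma~\ref{derivative-bounds0} applies and gives $D^{2}_{A}\log\Lambda(X,\psi)=\int X\psi\,d\mu_{A}=\int X(0)Y(0)\,d\mu_{A}$, the second term. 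Repeating the identical argument with base point $\gamma(t_{0})$ for arbitrary $t_{0}$ then yields the identity for every $t\in(-\epsilon,\epsilon)$.

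The hard part will be the analytic bookkeeping rather than any computation: justifying the joint $C^{1}$-regularity of $g$ in this infinite-dimensional setting and the identity $\frac{d}{dt}\big|_{0}D_{\gamma(t)}\log\Lambda(\psi)=D^{2}_{A}\log\Lambda(X,\psi)$, both of which rest on the analyticity of $\Lambda$, $H$ and $B\mapsto\mu_{B}$ together with the operator-norm estimates of Proposition~\ref{dpi}. It is worth noting why the answer comes out so clean: in general $D^{2}_{B}\log\Lambda(\cdot,\cdot)$ is the asymptotic covariance with respect to $\mu_{B}$, carrying an infinite series of correlation terms, but here it is only ever evaluated on the diagonal $s=t$, where both arguments belong to $\ker\mathcal{L}_{A}=T_{A}\mathcal{N}$; since $\mathcal{L}_{A}$ annihilates these functions one has $\int X\,d\mu_{A}=0$ and all higher correlation terms vanish, collapsing the expression to $\int X\psi\,d\mu_{A}$ — precisely the cancellation recorded in item (3) of Lemma~\ref{derivative-bounds0}.
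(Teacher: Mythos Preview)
Your argument is correct: splitting via the auxiliary function $g(s,t)=\int Y(\gamma(s))\,d\mu_{\gamma(t)}$, handling $\partial_{s}g$ by linearity of the integral, and identifying $\partial_{t}g(0,0)$ with $D^{2}_{A}\log\Lambda(X,\psi)=\int X\psi\,d\mu_{A}$ via items (2) and (3) of Lemma~\ref{derivative-bounds0} is exactly the natural route, and your closing remark about why the covariance collapses is on point. Note, however, that the present paper does not actually supply a proof of this lemma --- it is quoted from \cite{LR1} --- so there is no in-paper argument to compare against; your proof is precisely the standard derivation one would expect from the pressure-derivative formulas recorded in Lemma~\ref{derivative-bounds0}.
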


\section{The existence of geodesics in $\mathcal{N}$} \label{geo}
\medskip
Since the manifold of normalized potentials is an infinite dimensional manifold,
the usual way of proving the existence of geodesics via solutions of  ordinary differential equations with coefficients in the set of Cristoffel
symbols  does  not follow right away.

When
$M=\{0,1\}^\mathbb{N}$ and $T=\sigma$  we will show the existence of a  Fourier-like Hilbert basis for the kernel of the Ruelle operator and then  it follows that geodesics exists (see subsection \ref{aqui1}).
In the general case, Theorem \ref{geodesic-potentials-1} express in more precise terms the main result we will get.

It is not clear that the Palais-Smale theory works in our case.  However, what we shall show is in some
sense a weak Palais-Smale condition for our Riemannian manifold: roughly speaking, we shall construct a sequence of approximated solutions
of the Euler-Lagrange equation having as a limit a true solution of the equation.

We would like to point out  that we will not use any of the classical results on Hilbert manifolds.

We shall develop a strategy to prove the existence of geodesics based on the fact that there exist a (countable) complete orthogonal set $\varphi_n$,
$n \in \mathbb{N}$, on $\mathcal{L}^2 (\mu_A)$ according to Theorem 3.5 in \cite{KS} (see also \cite{CHLS}). Taking an order for the basis, and subspaces $\sigma_{m}$ generated by the first $m$ vectors of the basis, we shall study the system of differential equations of geodesics restricted to the submanifolds
obtained by $\Pi$-projections of open sets of the subspaces $\sigma_{m}$ in the manifold $\mathcal{N}$.
We shall be more precise in the forthcoming subsections.
\medskip

\subsection{Good Coordinate systems for the manifold of normalized potentials}

\begin{lemma} \label{basis}
Let $A$ be normalized potential, and let $B_{r}(A)$ is the open  neighborhood of $A$ in $\mathcal{N}$ given in Proposition \ref{dpi}).
Let $e_{n}$ be an orthonormal basis of $T_{A}\mathcal{N}$. Then we have,
\begin{enumerate}
\item Let $Q \in \Pi^{-1}(B_{r}(A))$, and let $\bar{e}_{n}$ be an extension of $e_{n}$ in the plane
$T_{A}\mathcal{N}$ as a constant vector field. Then, the functions
$$v_{n}(\Pi(Q)) = D_{Q}\Pi(\bar{e}_{n})$$
form a basis for $T_{\Pi(Q)}\mathcal{N}$ and
$$  \mid \langle v_{n}(\Pi(Q)), v_{m}(\Pi(Q)) \rangle  - \delta_{nm} \mid \leq \delta,$$
where $\delta_{nm}$ is the Kronecker function : $\delta_{nm} = 1$ if $n=m$, and $0$ otherwise.
\item There exists $b>0$, such that, the map $\Pi$ restricted to the sets
$$
U_{m} (b)= \{ \sum_{i=1}^{m} t_{i} e_{i}, \mbox{ } \mid t_{i} \mid < b\}$$
is an embedding into a $m$-dimensional submanifold $S_{m} \subset \mathcal{N}$, for every $m \in \mathbb{N}$.
\end{enumerate}
\end{lemma}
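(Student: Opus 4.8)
The plan is to deduce both statements directly from the bounds on $D_B\Pi$ given in Proposition \ref{dpi}, treating $\Pi$ as an analytic map on a Banach space (the $\alpha$-H\"older functions) and using the standard functional-analytic fact that a map which is close to the identity on a ball is an embedding. For item (1), I would first observe that by Proposition \ref{dpi}, after shrinking $r$ if necessary, for every $B \in B_r(A)$ the operator $(D_B\Pi)|_{T_A\mathcal{N}}$ differs from the identity $I$ in operator norm by at most some small $\delta_0$. Applying this to $Q \in \Pi^{-1}(B_r(A))$: since $\Pi(Q) \in B_r(A)$ and (abusing notation slightly as the paper does by identifying $Q$ with its normalization up to the charts of \cite{GKLM}) the relevant derivative satisfies $\|(D_Q\Pi)|_{T_A\mathcal{N}} - I\| \le \delta_0$, the images $v_n(\Pi(Q)) = D_Q\Pi(\bar e_n)$ are within $\delta_0$ of the orthonormal vectors $e_n$. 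Then one computes
$$
\langle v_n(\Pi(Q)), v_m(\Pi(Q))\rangle - \delta_{nm} = \langle v_n - e_n, v_m\rangle + \langle e_n, v_m - e_m\rangle,
$$
and each term is bounded using $\|v_n - e_n\| \le \delta_0$, $\|v_m\| \le 1 + \delta_0$, $\|e_n\| = 1$; choosing $\delta_0$ small as a function of $\delta$ (e.g. $\delta_0$ with $2\delta_0 + \delta_0^2 \le \delta$) gives the claimed inequality. That the $v_n$ form a basis for $T_{\Pi(Q)}\mathcal{N}$ follows because $D_Q\Pi$ restricted to $T_A\mathcal{N}$ is, by the same bound, an invertible operator (a small perturbation of $I$, hence invertible by Neumann series) onto $T_{\Pi(Q)}\mathcal{N}$, so it carries the basis $\{\bar e_n\}$ to a basis.

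For item (2), the idea is again that $\Pi$ restricted to a small enough neighborhood of $0$ in $T_A\mathcal{N}$ (here realized as $U_m(b) \subset \sigma_m$, the span of $e_1,\dots,e_m$) is injective with injective derivative, uniformly in $m$. Injectivity of the derivative at each point of $U_m(b)$ is exactly the invertibility established above (the derivative $(D_Q\Pi)|_{T_A\mathcal{N}}$ is $\delta_0$-close to $I$, hence has trivial kernel and closed range). For injectivity of $\Pi$ itself on $U_m(b)$, I would write, for $P, Q \in U_m(b)$,
$$
\Pi(P) - \Pi(Q) = (P - Q) + \int_0^1 \big[(D_{Q + s(P-Q)}\Pi) - I\big](P-Q)\, ds,
$$
using that $D_A\Pi = I$ on $T_A\mathcal{N}$ by Lemma \ref{derivative-bounds0}(5); the integral term has norm at most $\delta_0 \|P - Q\|$, so $\|\Pi(P) - \Pi(Q)\| \ge (1 - \delta_0)\|P - Q\|$, which gives injectivity and also that the inverse is Lipschitz, so $\Pi$ is a homeomorphism onto its image. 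Since $\sigma_m$ is finite-dimensional, the image $S_m = \Pi(U_m(b))$ is then an embedded $m$-dimensional submanifold. The key point is that $b$ can be chosen \emph{independently of $m$}: the bound from Proposition \ref{dpi} is on the full operator $D_B\Pi$ on $\mathrm{Hol}$, so its restriction to any finite-dimensional subspace $\sigma_m$ satisfies the same bound with the same $r$, hence the same $b$ works for all $m$.

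The main obstacle I anticipate is bookkeeping around the identification of tangent spaces and the precise sense in which $D_Q\Pi$ acts: the paper works with $\Pi$ defined on the Banach space of all H\"older potentials, and one must be careful that (a) the estimates of Proposition \ref{dpi} genuinely apply along the curve $s \mapsto Q + s(P-Q)$ staying inside $B_r(A)$ (which forces $b < r$, and one should track that $U_m(b) \subset B_r(A)$), and (b) the "constant vector field extension $\bar e_n$" is meaningful — but this is fine precisely because we are in a linear (affine) model of the manifold, where $T_A\mathcal{N}$ is a fixed linear subspace and parallel translation of vectors is literally the identity. No deep input is needed beyond Proposition \ref{dpi}, Lemma \ref{derivative-bounds0}(5), and the elementary Neumann-series / mean-value estimates above; the uniformity in $m$, which is what makes the lemma useful for the subsequent approximation scheme, is essentially free once one notes the bounds are dimension-independent.
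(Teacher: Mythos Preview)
Your approach is correct and rests on the same core idea as the paper's: both deduce everything from Proposition~\ref{dpi}, namely that $D_Q\Pi|_{T_A\mathcal{N}}$ is $\delta$-close to the identity. The paper's argument is much terser: for item~(1) it simply asserts that the $v_n$ are ``almost perpendicular'' and hence linearly independent, and for item~(2) it invokes the constant rank theorem (the ``local form of immersions'') rather than your direct mean-value estimate $\|\Pi(P)-\Pi(Q)\| \ge (1-\delta_0)\|P-Q\|$. Your route for item~(2) is slightly sharper, since it yields global injectivity on $U_m(b)$ together with a bi-Lipschitz inverse and thereby makes the uniformity of $b$ in $m$ explicit; the paper's bare appeal to the immersion theorem is local and leaves that uniformity implicit. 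One small caution on your item~(1) computation: the decomposition $\langle v_n,v_m\rangle - \delta_{nm} = \langle v_n-e_n,v_m\rangle + \langle e_n,v_m-e_m\rangle$ tacitly uses $\langle e_n,e_m\rangle = \delta_{nm}$ in the inner product at $\Pi(Q)$, whereas the $e_n$ are orthonormal only with respect to $\mu_A$; you need an additional term $\int e_ne_m\,d\mu_{\Pi(Q)} - \int e_ne_m\,d\mu_A$, which is small by the analyticity of $B\mapsto\mu_B$ (Lemma~\ref{derivative-bounds0}(1)). This is routine, and the paper glosses over the same point.
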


\begin{proof}

From Proposition \ref{dpi}, we know that $D_{A} \Pi\mid_{T_{A}\mathcal{A}} = I$ and that $D_{Q}\Pi \mid_{T_{A}\mathcal{A}}$ is close to the identity
if $B=\Pi(Q) \in B_{r}(A)$. Hence, if we chose $Q = A + \sum_{i=1}^{m} t_{i}w_{i}$ in a way that $\parallel B- A \parallel < r$ then the vectors $ v_{n}(\Pi(Q)) = D_{Q}\Pi(e_{n})$
will be almost perpendicular at $T_{B}\mathcal{N}$. This yields that the vectors $v_{n}(B)$ are linearly independent in $T_{B}\mathcal{N}$ and therefore,
the map $\Pi$ has constant rank $m$ in $U_{m}$. By the local form of immersions, the image $S_{m} = \Pi(U_{m})$ is an analytic submanifold of $\mathcal{N}$
of dimension $m$.
\end{proof}

\subsection{A system of partial differential equations for geodesic vector fields}

A natural way to show that geodesics exist in $\mathcal{N}$ is to show that geodesics exist in each analytic submanifold $S_{m}$ ( of dimension $m$) and then take the limit as $m$ goes to $+\infty$. On each submanifold $S_{m}$, a system $\Sigma_{m}$ of partial differential equations will arise from the restriction of the system of differential equations of geodesics. Our strategy to solve an initial value problem for the geodesic equation is to solve the initial value problem for $\Sigma_{m}$ in each submanifold $S_{m}$, then take the limit of the sequence $\gamma_{m}$ of solutions as $m \rightarrow +\infty$, and finally, we have to show that the limit gives rise to a geodesic of $\mathcal{N}$ solving the initial value problem.

The existence of a limit solution depends on uniform estimates of the coefficients of the systems $\Sigma_{m}$. So the main goal of this subsection is to obtain an explicit expression of the geodesic systems $\Sigma_{m}$ in terms of the coordinates in $S_{m}$, and show that their coefficients have uniformly bounded norms in an open neighborhood of each normalized potential. Proposition \ref{dpi} will be crucial for this purpose.

To get the expressions of the systems $\Sigma_{m}$, we apply the ideas of the finite dimensional case. So let $A \in S_{m}$, $v \in T_{A}S_{m}$, and suppose that the solution of the system $\Sigma_{m}$, $\gamma_{m}(t)$, given by the initial conditions
$\gamma_{m}(0) = A$, $\gamma'_{m}(0)= v$ exists. We shall characterize $\gamma_{m}$ in terms of a differential equation in the submanifold $S_{m}$ that has a unique solution.
We would like to point out that the differential equations of geodesics in the finite dimensional case are written in terms of the Christoffel coefficients. However, we shall avoid the use of Christoffel coefficients and obtain a simpler, equivalent system for the geodesics, of partial differential equations of first order.

Let $X(t) = \gamma'(t)$, since it is geodesic, $\nabla_{X}X =0$, where $\nabla$ is the Levi-Civita
connection of the Riemannian metric in $\mathcal{N}$. This implies that
\begin{equation} \label{K61}\langle \nabla_{X}X, Y\rangle  =0, \end{equation}
for every $Y \in T_{\gamma(t)}\mathcal{N}$. By the expression of the Levi-Civita connection in terms of the metric (see the end of Section \ref{Prel}), we have
\begin{equation} \label{K22}\langle \nabla_{X}X, Y\rangle  = X\langle X,Y\rangle  - \frac{1}{2}Y\langle X,X\rangle  -\langle X,[X,Y]\rangle , \end{equation}
where $X(f)$ means the derivative of a scalar function $f$ with respect to $X$.

In particular, the energy of geodesics is constant,
\begin{equation}  \frac{1}{2}X\langle X, X \rangle = \langle \nabla_{X}X, X\rangle  = 0.\end{equation}

So let us restrict ourselves to the energy level of vector field $X$ with constant norm equal to 1. In this case, the equation of geodesics  and the expression of the Levi-Civita connection in terms of the metric   gives
$$ 0 = \langle \nabla_{X}X, Y\rangle  = X\langle X,Y\rangle  -\langle X,[X,Y]\rangle ,$$
or equivalently,
\begin{equation} \label{K64}X\langle X,Y\rangle   = \langle X,[X,Y]\rangle, \end{equation}
for every vector field $Y$.

 Let $e_{i}$ for $i=1, 2, .., m$ be the orthonormal vector fields in $T_{A}S_{m}$ given in Proposition \ref{basis}, let $\Phi : U_{m} \longrightarrow S_{m} $ be given by
$$\Phi(t_{1}, t_{2},..,t_{m}) = \Pi (\sum_{i=1}^{m} t_{i}e_{i})$$
that is a coordinate system defined in an open neighborhood $U_{m}$ of $0 \in T_{A}S_{m}$,  whose image is the smooth $m$-dimensional submanifold $S_{m}$.

Let $X_{n} = D\Phi(e_{n})$ be the coordinate vector fields tangent to $S_{m}$. Replacing in the expression of the geodesic equation above we have

$$ X\langle X,X_{n}\rangle  = \langle X,[X,X_{n}]\rangle . $$
This set of equations might be
used to show the existence of the geodesic vector field. Let us write down the system explicitly.

 Let $ X = \sum_{i=1}^{m} x_{i} X_{i}$, and let $\bar{x}_{i} = \langle X,X_{i}\rangle$. The differential equation of the geodesic vector field $X$
is equivalent to

$$ X\langle X,X_{n}\rangle  = \langle X,[X,X_{n}]\rangle  = \langle X, [\sum_{i=1}^{m} x_{i} X_{i},X_{n}]\rangle ,$$
and we observe that

$$ [\sum_{i=1}^{m} x_{i} X_{i},X_{n}] = \sum_{i=1}^{m} [x_{i}X_{i},X_{n}] = \sum_{i=1}^{m} (x_{i}[X_{i},X_{n}] - X_{n}(x_{i})X_{i}) ,$$
and since the vector fields $X_{n}$ commute, we finally get

$$ [\sum_{i=1}^{m} x_{i} X_{i},X_{n}] = \sum_{i=1}^{m} - X_{n}(x_{i})X_{i}.  $$

Hence we can write the differential equation for $X$ as

\begin{eqnarray*}
X(\bar{x}_{n}) = X\langle X,X_{n}\rangle  & = & -\langle X,\sum_{i=1}^{m}  X_{n}(x_{i})X_{i}\rangle \\
&  = & - \sum_{i=1}^{m}   \langle X,X_{n}(x_{i})X_{i}\rangle  = - \sum_{i=1}^{m}   X_{n}(x_{i})\bar{x}_{i}.
\end{eqnarray*}

In terms of $\frac{d}{dt}$, $\frac{d}{dt_{n}}$ we obtain a system $\Sigma_{m}$ of first order partial differential equations

\begin{eqnarray} \label{ney}
\Sigma_{m} := \mbox{ } \frac{d}{dt}(\bar{x}_{n})  & = & - \sum_{i=1}^{m}  \frac{d}{dt_{n}}(x_{i})\bar{x}_{i}. , \mbox{ } n=1,2,..,m.
\end{eqnarray}

The above system of differential equations gives rise to a system of partial differential equations for the functions $\bar{x}_{i}$.
Indeed, let $X=(x_{1},x_{2},..,x_{m})$, $\bar{X} = (\bar{x}_{1}, \bar{x}_{2},..,\bar{x}_{m})$, and let $M_{m}$ be the matrix of the first fundamental form in the basis $v_{i}$, namely,
$$ (M_{m})_{ij} = \langle X_{i}, X_{j} \rangle .$$
Then we have that $\bar{X} = M_{m}X$, and replacing this identity in the  initial system  (\ref{ney}) we get a system of first order,
quasi-linear partial differential equations (see chapter 7 in \cite{BJS} for definition and properties)  for the functions $x_{i}$ whose coefficients depend on the entries of the matrices $(M_{m})^{-1}$ and $\frac{d}{dt_{n}}((M_{m})^{-1})$: let $(M_{m})^{-1}_i$ be the $i$-th row of the matrix $(M_{m})^{-1}$. Then we have
$$\frac{d}{dt}(\bar{x}_{n}) = - \sum_{i=1}^{m}  \frac{d}{dt_{n}}(<(M_{m})^{-1}_i , \bar{X}>)\bar{x}_{i}. , \mbox{ } n=1,2,..,m, $$
\medskip
where $<(M_{m})^{-1}_i , \bar{X}>$ is the Euclidian inner product of the $i$-th row $(M_{m})^{-1}_i $ and the vector $X$.

\textbf{Remark}: Actually, the Christoffel coefficients of the Riemannian metric involve the derivatives of the entries of the first fundamental form of the metric. So it is not surprising that such derivatives appear in any formulation of the problem of the existence of geodesics.

\subsection{Uniform bounds for the PDE geodesic systems in a neighborhood of a Fourier-like probability}

In this subsection, we shall estimate the sup norm of the coefficients of the system of partial differential equations obtained in the previous section, in a neighborhood of a normalized potential corresponding to a Fourier-like equilibrium measure for the shift of two symbols. The main result is the following:

\begin{proposition} \label{system-bound}
Let $A \in \mathcal{N}$ be the normalized potential associated to a equilibrium probability of two symbols. There exists an open neighborhood $B_{r}(A) \subset \mathcal{N}$ and $D>0$ such that the coefficients of the
quasilinear systems of partial differential equations
$$\frac{d}{dt}(\bar{x}_{n}) = - \sum_{i=1}^{m}  \frac{d}{dt_{n}}(<(M_{m})^{-1}_i , \bar{X}>)\bar{x}_{i}. , \mbox{ } n=1,2,..,m $$
are uniformly bounded above by $D$.
\end{proposition}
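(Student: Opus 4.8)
The plan is to express every coefficient of the quasilinear system $\Sigma_m$ in terms of the matrix $M_m$ of the first fundamental form and its first $t_n$-derivatives, and then to bound $(M_m)^{-1}$ and $\frac{d}{dt_n}M_m$ on a fixed ball $B_r(A)$ by constants that do not depend on $m$. Recall that, with $X=(x_1,\dots,x_m)$, $\bar X=M_mX$, and $(M_m)^{-1}_i$ the $i$-th row of $(M_m)^{-1}$, the system reads $\frac{d}{dt}(\bar x_n)=-\sum_{i=1}^{m}\frac{d}{dt_n}\big(\langle (M_m)^{-1}_i,\bar X\rangle\big)\bar x_i$; expanding the inner derivative produces, as coefficients, the entries of $(M_m)^{-1}$ (multiplying the derivatives $\frac{d}{dt_n}\bar X$ of the unknown), together with the entries of $\frac{d}{dt_n}(M_m)^{-1}=-(M_m)^{-1}\big(\frac{d}{dt_n}M_m\big)(M_m)^{-1}$ and the components of $\bar X$ (the quasilinear dependence on the unknown itself). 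Hence it suffices to control, uniformly in $m$ and $n$: (a) $\|(M_m)^{-1}\|_{\mathrm{op}}$; (b) $\|\frac{d}{dt_n}M_m\|_{\mathrm{op}}$. Property (a) also yields the needed bound on $\|X\|_2$ on the unit-energy level $\langle X,X\rangle=X^{\top}M_mX=1$, since $\|X\|_2^2\le\|(M_m)^{-1}\|_{\mathrm{op}}\,X^{\top}M_mX=\|(M_m)^{-1}\|_{\mathrm{op}}$, and on $\|M_m\|_{\mathrm{op}}$, hence on $\|\bar X\|_2=\|M_mX\|_2$.

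For (a) I would take the orthonormal basis $e_n=\gamma_n$ of $T_A\mathcal{N}$ to be the Fourier-like basis of $\ker\mathcal{L}_A$, so that $\|\gamma_n\|_{C^0}\le\beta$ for all $n$. For a base point $B=\Pi(Q)\in B_r(A)$ the coordinate fields are $X_i=D_Q\Pi(\gamma_i)$, and Proposition \ref{dpi} gives $\|X_i-\gamma_i\|_{C^0}\le\delta(r)\,\|\gamma_i\|_{C^0}\le\delta(r)\beta$ with $\delta(r)\to 0$ as $r\to0$. Writing $(M_m)_{ij}=\int X_iX_j\,d\mu_B$, expanding $X_iX_j=\gamma_i\gamma_j+(X_i-\gamma_i)\gamma_j+X_i(X_j-\gamma_j)$, and then comparing $\int(\cdot)\,d\mu_B$ with $\int(\cdot)\,d\mu_A$ using the continuity of $B\mapsto\mu_B$ from Lemma \ref{derivative-bounds0}(1) (so that $\|\mu_B-\mu_A\|_{(C^0)^*}$ is small) and the orthonormality $\int\gamma_i\gamma_j\,d\mu_A=\delta_{ij}$, one gets $|(M_m)_{ij}-\delta_{ij}|\le\eta(r)$ with $\eta(r)\to0$ as $r\to0$, uniformly in $i,j,m$; here the lower bound $\alpha$ in the Fourier-like hypothesis is what keeps the basis from degenerating. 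The delicate point, which I expect to be the main obstacle, is to upgrade this entrywise estimate to $\|M_m-I\|_{\mathrm{op}}\le\frac12$ for all $m$ at once: this is precisely where the structure of the explicit Fourier-like basis for the two-symbol shift (Subsection \ref{marbas1}, whose elements are constant on cylinders) must be used, so that $M_m$ appears as the compression to $\sigma_m$ of a single bounded operator on $\overline{T_A\mathcal{N}}^{\,L^2(\mu_A)}$ — morally $\Psi_B^{*}\Psi_B$ with $\Psi_B=D_Q\Pi|_{T_A\mathcal{N}}$ read between the two $L^2$ structures — which is close to the identity independently of $m$, whence $\|M_m-I\|_{\mathrm{op}}\le\|\Psi_B^{*}\Psi_B-I\|_{\mathrm{op}}$ with the right-hand side free of $m$. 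Once $\|M_m-I\|_{\mathrm{op}}\le\frac12$ is secured, $(M_m)^{-1}=\sum_{k\ge0}(I-M_m)^k$ has norm $\le 2$, which settles (a).

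For (b) I would differentiate $(M_m)_{ij}=\int X_iX_j\,d\mu_{\gamma(t_n)}$ along the $t_n$-coordinate curve by Lemma \ref{Leibniz}, getting
$$\frac{d}{dt_n}(M_m)_{ij}=\int \Big(\tfrac{d}{dt_n}X_i\Big)X_j\,d\mu_B+\int X_i\Big(\tfrac{d}{dt_n}X_j\Big)d\mu_B+\int X_iX_jX_n\,d\mu_B,$$
where $X_n=D_Q\Pi(\gamma_n)$ is the velocity of that curve and $\frac{d}{dt_n}X_i=D^2_Q\Pi(\gamma_n,\gamma_i)$. Proposition \ref{dpi} bounds $\|X_i\|_{C^0}\le(1+\delta(r))\beta$ and $\|D^2_Q\Pi(\gamma_n,\gamma_i)\|_{C^0}\le(1+\delta(r))\beta^2$, so each summand is bounded by a constant depending only on $\beta$ and $r$, uniformly in $i,j,n,m$; the same compression argument as in (a) then turns this entrywise control into $\|\frac{d}{dt_n}M_m\|_{\mathrm{op}}\le C$ uniformly in $m,n$. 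Combining (a) and (b), all coefficients of $\Sigma_m$ are bounded by a constant $D=D(A,\alpha,\beta,r)$ independent of $m$, as claimed. All steps except the uniform-in-$m$ operator-norm estimates for $M_m-I$ and $\frac{d}{dt_n}M_m$ are routine given Proposition \ref{dpi}, Lemma \ref{basis} and Lemma \ref{Leibniz}; that uniformity, and its reliance on the explicit cylinder-adapted Fourier-like basis of the two-symbol case, is the heart of the matter.
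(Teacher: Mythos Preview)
Your approach is essentially the paper's: the paper derives the proposition from Corollary~\ref{basis-local-bound} (the $C^0$ bound $\rho$ on the coordinate fields $X_n=D_Q\Pi(e_n)$, coming from Proposition~\ref{dpi} and the Fourier-like hypothesis) together with Lemma~\ref{bounded-coeff}, whose proof is exactly your (a)--(b) scheme---Proposition~\ref{dpi} and Lemma~\ref{basis} for $M_m\approx I$, then Lemma~\ref{Leibniz} to differentiate $(M_m)_{ij}=\int X_iX_j\,d\mu_B$ and bound the three resulting integrals by $(1+\delta)$ and $\rho^3$.

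The one point where you are \emph{more} careful than the paper deserves a remark. The paper works with the entrywise norm $\|M\|_{\sup}=\sup_{i,j}|M_{ij}|$ and asserts directly that $|\,(M_m)_{ij}-\delta_{ij}\,|\le\delta$ forces $(M_m)^{-1}$ to be close to the identity with norm bounded ``uniformly (in $m$)''; it does not isolate the passage from an entrywise estimate to an operator-norm estimate that you flag as the heart of the matter. Your compression argument---viewing $M_m$ as the truncation to $\sigma_m$ of the single operator $\Psi_B^{*}\Psi_B$ on $\overline{T_A\mathcal{N}}^{\,L^2}$, whose distance to the identity is controlled by Proposition~\ref{dpi} independently of $m$---is the clean way to justify that step, and it is consistent with how the paper uses the Fourier-like basis. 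In short: same strategy, same ingredients; your write-up simply makes explicit a uniformity that the paper takes for granted.
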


Recall that a quasilinear system of partial differential equations of vector functions $x_{i}(t_{1}, t_{2},.., t_{m}) \in \mathbb{R}$ is a system of the form
$$ F(t_{i}, x_{j}, \frac{dx_{j}}{dt_{i}}) = 0 $$
where $F$ is a quadratic function of the variables  $x_{j}, \frac{dx_{j}}{dt_{i}}$. The system in Proposition \ref{system-bound} is a particular case, resembling the usual system of differential equations for geodesics
obtained by using the Christoffel coefficients.

A family of probabilities that are Fourier-like is given by the following Lemma:

\begin{lemma} \label{maxent-basis}
Let $A \in \mathcal{N}$ be the normalized  H\"older potential associated to an equilibrium probability $\mu$ on $M=\{0,1\}^\mathbb{N}$. Then, there exist $\alpha,\beta>0$, and an orthonormal basis of $T_{A}\mathcal{N}$ given by  continuous functions $\{e_{n}\}$, such that, the supremum of $e_{n}$ is $C^0$ and  $L^2(\mu)$ bounded above by $\beta$, and below by $\alpha$, for every $n$.
\end{lemma}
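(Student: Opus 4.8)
The plan is to identify the kernel $\ker\mathcal{L}_{A}$ (whose H\"older part is the tangent space $T_{A}\mathcal{N}$) with an $L^{2}$-space over a measure on $M=\{0,1\}^{\mathbb{N}}$, and then, through a measure-rectifying change of variables, to pull back the classical trigonometric orthonormal basis, which is continuous and uniformly bounded in the $C^{0}$ norm.

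\emph{Step 1 (an explicit model for the kernel).} Since $A$ is normalized, $\mathcal{L}_{A}f(x)=p(x)f(0x)+q(x)f(1x)$ with $p(x)=e^{A(0x)}$, $q(x)=e^{A(1x)}$, $p+q\equiv 1$, and $p,q$ H\"older and bounded away from $0$ and $1$. I would introduce the map $\Psi$, defined on functions $\varphi:M\to\mathbb{R}$ by
\[
\Psi(\varphi)(0x)=\varphi(x)\,q(x),\qquad \Psi(\varphi)(1x)=-\,\varphi(x)\,p(x),
\]
and check three facts: (i) $\mathcal{L}_{A}(\Psi(\varphi))\equiv 0$, and conversely every $g\in\ker\mathcal{L}_{A}$ equals $\Psi(\varphi)$ with $\varphi(x)=g(0x)/q(x)$, which is H\"older whenever $g$ is; (ii) using $\int h\,d\mu=\int\mathcal{L}_{A}h\,d\mu$, that $\langle\Psi(\varphi),\Psi(\tilde\varphi)\rangle_{L^{2}(\mu)}=\int\varphi\,\tilde\varphi\,w\,d\mu$ with $w:=pq$, so $\Psi$ is an isometry of $L^{2}(w\,d\mu)$ onto $\ker\mathcal{L}_{A}$; (iii) $\|\Psi(\varphi)\|_{\infty}\le\|\varphi\|_{\infty}$, because $0<p,q<1$. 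After this the problem reduces to producing an orthonormal basis $\{\varphi_{n}\}$ of $L^{2}(w\,d\mu)$ made of continuous (indeed H\"older) functions with $\sup_{n}\|\varphi_{n}\|_{\infty}<\infty$.

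\emph{Steps 2--3 (rectifying the measure and pulling back).} Put $\nu=w\,d\mu/Z$ with $Z=\int w\,d\mu\in(0,\tfrac14]$; $\nu$ is a fully supported, non-atomic probability on $M$ whose density against $\mu$ is H\"older and bounded above and below. Let $\Phi(x)=\sum_{i\ge1}x_{i}2^{-i}$, a surjective Lipschitz map $M\to[0,1]$, injective off the countable (hence $\nu$-null) set of preimages of dyadic rationals, and let $F$ be the distribution function of $\Phi_{*}\nu$. Then $F:[0,1]\to[0,1]$ is a homeomorphism, and the Gibbs bounds for $\mu$ (recall $A<0$, pressure $0$) give $\Phi_{*}\nu([t,t+2^{-k}])\le Cr^{k}$ for some $r<1$, so $F$, hence $T:=F\circ\Phi$, is H\"older. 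By construction $T:M\to[0,1]$ is continuous, surjective, H\"older, satisfies $T_{*}\nu=\mathrm{Leb}$, and is injective off a countable $\nu$-null set whose image is Lebesgue-null; a standard Borel-isomorphism argument then shows $f\mapsto f\circ T$ is an isometric isomorphism $L^{2}([0,1],\mathrm{Leb})\to L^{2}(\nu)$ preserving continuity, H\"older regularity and $C^{0}$ norms. Taking $\{h\}$ to be the trigonometric basis of $L^{2}([0,1],\mathrm{Leb})$ (smooth, $\|h\|_{\infty}\le\sqrt2$), the functions $\varphi_{n}:=Z^{-1/2}(h\circ T)$ form an orthonormal basis of $L^{2}(w\,d\mu)$ of H\"older functions with $\|\varphi_{n}\|_{\infty}\le\sqrt{2/Z}$; then $e_{n}:=\Psi(\varphi_{n})$ is, by Step 1, an orthonormal basis of $T_{A}\mathcal{N}$ made of continuous (in fact H\"older) functions with $\|e_{n}\|_{L^{2}(\mu)}=1$, $\|e_{n}\|_{\infty}\le\sqrt{2/Z}$, and $\|e_{n}\|_{\infty}\ge\|e_{n}\|_{L^{2}(\mu)}=1$. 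This yields the Lemma with $\alpha=1$ and $\beta=\sqrt{2/Z}$.

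\emph{Where the difficulty lies.} The key obstruction to keep in mind is that a localized basis is hopeless: martingale/Haar differences supported on length-$k$ cylinders have $\mu$-mass that shrinks exponentially, so once normalized their sup-norms blow up, and the same scale spread rules out single-level constructions; one is forced to a delocalized basis, which the change of variables $T$ produces robustly for an arbitrary equilibrium state. Hence the work is concentrated in Step 2: checking that $T$ is a genuine measure isomorphism onto $([0,1],\mathrm{Leb})$ (controlling the exceptional set on both sides), and extracting from the Gibbs estimates the H\"older regularity of $F$. If the resulting H\"older exponent happens to be below the prescribed one, I would re-orthonormalize a small $\Holder$-perturbation of the $e_{n}$, which preserves the $C^{0}$ bounds because the perturbed Gram matrix stays close to the identity. (For Markov $\mu$ one can bypass all of this and use the explicit cylinder-constant basis of Subsection~\ref{marbas1}.)
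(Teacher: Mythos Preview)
Your approach is essentially correct for the Lemma as stated and is genuinely different from the paper's. The paper does \emph{not} go through a measure-rectifying change of variables; instead it stays entirely in the Haar/cylinder world. Starting from the Kessebohmer--Samuel Haar system $e_{[x]}$, it forms the rescaled pieces $c_{[x]}=\sqrt{\mu([x])}\,e_{[x]}$, extends $c_{[0x]}$ across $[1]$ by the kernel relation (exactly your $\Psi$), and then---this is the key trick---\emph{sums over all words of a fixed length}: $\hat{\rho}_n=\sum_{x\in\mathfrak{C}_n}\mathfrak{a}_x$. Because the supports are disjoint within a level and the Gibbs bounds force $\mu([x0])/\mu([x1])$ to be uniformly bounded, each $\mathfrak{a}_x$ already has bounded $C^0$ norm (though tiny $L^2$ norm), and the level-sum has \emph{both} bounded sup norm and $L^2$ norm bounded away from zero. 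Orthogonality between levels survives, and the resulting functions are locally constant, hence Lipschitz. So your diagnosis that ``a localized basis is hopeless'' is too pessimistic: individual Haar pieces do blow up when normalized, but a single level-wide sum of the \emph{unnormalized} pieces rescues both bounds simultaneously.

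What your route buys is conceptual transparency: once Step~1 reduces matters to $L^2(w\,d\mu)$, pulling back the trigonometric basis through the CDF is a clean and general device that would work for any fully supported non-atomic reference measure. The paper's route buys explicitness and, more importantly, the right regularity for free: its basis functions are Lipschitz, hence $\alpha$-H\"older for the fixed $\alpha$ used throughout, which is needed when one later applies $D\Pi$ to the $\bar e_n$ (Corollary~\ref{basis-local-bound}). Your $e_n=\Psi(Z^{-1/2}h\circ T)$ are only H\"older of the exponent inherited from $T=F\circ\Phi$, which depends on the Gibbs constants and may fall below $\alpha$; the proposed fix (``perturb in $\Holder$ and re-orthonormalize'') is not convincing as written, since Gram--Schmidt on an infinite family with a merely $\ell^\infty$-close Gram matrix does not obviously preserve uniform $C^0$ bounds. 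For the Lemma exactly as stated (continuous $e_n$) this is harmless, but it is a genuine gap for the downstream use.
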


For the proof see Appendix  Section \ref{marbas}.
\smallskip

The estimates for the coefficients of the systems rely in a crucial way on the following result:

\begin{corollary} \label{basis-local-bound}
Let $A \in \mathcal{N}$ be the normalized potential associated to a Fourier-like equilibrium probability. Denote by $e_n$, the associated basis satisfying the conditions I) and II) of Definition  \ref{suit}. Let $\bar{e}_{n}$ be the extension of $e_{n}$ in the plane $T_{A}\mathcal{N}$ as a constant vector field. Then, there exists an open neighborhood $U \subset \mathcal{N}$ containing $A$, and $\rho >0$ such that
\begin{enumerate}
\item For every $B=\Pi(s_{1},s_{2},..,s_{m}) \in \Pi(U)$, the family of functions
$$\{ X_{n}(B) = D_{(s_{1},s_{2},..,s_{m})}\Pi(\bar{e}_{n}) \}$$ is a basis for  $T_{B}\mathcal{N}$.
\item The sup norm of each element of the basis $X_{n}(B)$ is bounded above by $\rho$.
\end{enumerate}
\end{corollary}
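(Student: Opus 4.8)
The plan is to derive both parts of Corollary \ref{basis-local-bound} as a direct consequence of Proposition \ref{dpi} together with the Fourier-like hypothesis (Definition \ref{suit}), making sure that the neighborhood $U$ and the bound $\rho$ can be chosen uniformly in $m$. I would first apply Proposition \ref{dpi} with a fixed small $\delta \in (0,1/2)$, say $\delta = 1/4$, to obtain a radius $r>0$ such that for every H\"older $B$ in the ball $B_r(A)$ one has $\parallel (D_B\Pi)\mid_{T_A\mathcal{N}} - I \parallel \le \delta$ in the operator norm on $\mathrm{Hol}$ with the sup norm. Set $U = B_{r/2}(A) \cap \mathcal{N}$ (or more precisely the preimage of a ball of radius $r$ under $\Pi$ intersected with a chart neighborhood, as in Lemma \ref{basis}), so that any $Q = A + \sum_{i=1}^m s_i \bar e_i$ with $\parallel \sum s_i \bar e_i\parallel$ small enough satisfies $\parallel \Pi(Q) - A\parallel < r$; this is exactly the setup of Lemma \ref{basis}, whose conclusion (1) already gives that $\{v_n(\Pi(Q)) = D_Q\Pi(\bar e_n)\}$ forms a basis of $T_{\Pi(Q)}\mathcal{N}$ with $|\langle v_n, v_m\rangle - \delta_{nm}| \le \delta$. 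That takes care of statement (1) of the Corollary: the $X_n(B)$ are precisely these $v_n(B)$, and near-orthonormality forces linear independence, hence a basis.

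For statement (2), the sup-norm bound, the key observation is the triangle inequality in $\mathrm{Hol}$ with the sup norm:
\begin{equation} \label{supbound}
\parallel X_n(B)\parallel_{C^0} = \parallel D_Q\Pi(\bar e_n)\parallel_{C^0} \le \parallel \bar e_n\parallel_{C^0} + \parallel (D_Q\Pi - I)(\bar e_n)\parallel_{C^0} \le \parallel \bar e_n\parallel_{C^0} + \delta\,\parallel \bar e_n\parallel_{C^0}.
\end{equation}
By hypothesis I) of Definition \ref{suit} (and Lemma \ref{maxent-basis} in the two-symbol case), the functions $e_n$ — hence their constant extensions $\bar e_n$ — have $C^0$ norms uniformly bounded above by $\beta$. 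Therefore $\parallel X_n(B)\parallel_{C^0} \le (1+\delta)\beta \le \tfrac{3}{2}\beta =: \rho$ for every $n$ and every $B \in \Pi(U)$. This bound is visibly independent of $m$, which is the whole point: the submanifolds $S_m$ grow with $m$, but the coordinate vector fields $X_n$ are controlled uniformly. I would also remark that the lower bound (condition II of Definition \ref{suit}) is not needed for this particular corollary but will be used later to control $(M_m)^{-1}$; here I only need the upper bound.

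The main obstacle — really the only subtlety — is making sure the neighborhood $U$ is genuinely independent of $m$. Since Proposition \ref{dpi} produces a single radius $r$ depending only on $A$ and the fixed choice $\delta = 1/4$, and the constant extensions $\bar e_n$ live in the single plane $T_A\mathcal{N}$ regardless of $m$, the operator-norm estimate $\parallel D_B\Pi - I\parallel \le \delta$ applies simultaneously to all $\bar e_n$; there is no $m$-dependence hiding in the constants. One should be slightly careful that the chart $\Phi_m(s_1,\dots,s_m) = \Pi(\sum s_i \bar e_i)$ from Lemma \ref{basis} is well-defined on a ball of radius $b$ also independent of $m$, but this is exactly conclusion (2) of Lemma \ref{basis}, which we may invoke. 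So the proof is essentially: quote Lemma \ref{basis} for (1), and combine Proposition \ref{dpi} with Definition \ref{suit}(I) via \eqref{supbound} for (2), tracking that all constants are $m$-free.
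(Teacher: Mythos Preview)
Your proposal is correct and follows essentially the same route as the paper: the paper's proof is the one-line remark that the corollary follows from Lemma \ref{maxent-basis} and Proposition \ref{dpi}, and you have simply spelled out the details of that deduction (invoking Lemma \ref{basis} for part (1) and the triangle-inequality estimate \eqref{supbound} for part (2)). Your observation that the constants are $m$-free, and that only condition I) of Definition \ref{suit} is needed here, is accurate and adds useful precision to the paper's terse argument.
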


\begin{proof}

The corollary follows from Lemma \ref{maxent-basis} and Proposition \ref{dpi}.
\end{proof}

Let us consider the norm for matrices $\parallel M \parallel_{sup} = sup \{\mid M_{ij} \mid  \} $.

\begin{lemma} \label{bounded-coeff}
Let $A \in \mathcal{N}$ be the normalized potential associated to a Fourier-like equilibrium probability $\mu_A$. Then, there exists $C>0$ such that the norms of the matrices $(M_{m})^{-1}$, and the coefficients of $\frac{d}{dt_{n}}((M_{m})^{-1})$ are uniformly bounded by $C$ in the neighborhood $B_{r}(A)$.
\end{lemma}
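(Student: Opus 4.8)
The plan is to write $M_m = I_m + E_m$, with $I_m$ the $m\times m$ identity, and to prove that the perturbation $E_m$ is small \emph{in operator norm, uniformly in $m$} and over $B\in B_r(A)$; the bound on $(M_m)^{-1}$ and its derivatives then follows from a Neumann series argument. First I would record the entrywise smallness of $E_m$, which is immediate from what is already proved: $(M_m)_{ij}=\langle X_i,X_j\rangle$ with $X_i=X_i(B)=D_{(s_1,\dots,s_m)}\Pi(\bar e_i)$ and $\{e_i\}$ the Fourier-like orthonormal basis of $T_A\mathcal{N}$; by Lemma \ref{basis}(1) (equivalently, Proposition \ref{dpi} applied to $D_B\Pi-I$) one gets, after shrinking $r$, that $|(M_m)_{ij}-\delta_{ij}|\le\delta$ for all $i,j$ and all $B\in B_r(A)$, with $\delta$ as small as desired, so $\|E_m\|_{sup}\le\delta$. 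This alone will not suffice, because the Neumann series of $(I_m+E_m)^{-1}$ in the $\|\cdot\|_{sup}$ norm does not converge uniformly in $m$ (the $k$-th term only obeys $\|E_m^k\|_{sup}\le m^{k-1}\delta^k$).

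The key step, and the one I expect to be the main obstacle, is therefore to upgrade this to a bound $\|E_m\|_{op}\le \tfrac12$ on the $\ell^2$ operator norm with a constant independent of $m$; this is exactly where condition I) of Definition \ref{suit} is used essentially. Writing $X_i=e_i+r_i$ with $r_i=(D_B\Pi-I)(e_i)$, Proposition \ref{dpi} together with condition I) gives $\|r_i\|_{C^0}\le\delta\,\|e_i\|_{C^0}\le\delta\beta$, hence $\|r_i\|_{L^2(\mu_A)}\le\delta\beta$. For a unit vector $v=(v_1,\dots,v_m)$ set $w=\sum_i v_i e_i$, so $\|w\|_{L^2(\mu_A)}=|v|=1$ by orthonormality and
$$ v^{\mathsf T}E_m v=\langle w+Rw,\,w+Rw\rangle-\langle w,w\rangle=2\langle w,Rw\rangle+\|Rw\|_{L^2(\mu_A)}^2, $$
where $R=(D_B\Pi-I)|_{T_A\mathcal{N}}$ sends $w$ to $\sum_i v_i r_i$. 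The estimate I would aim to establish is $\|Rw\|_{L^2(\mu_A)}\le C\delta\,\|w\|_{L^2(\mu_A)}$, i.e.\ the \emph{dimension-free} $L^2(\mu_A)$-operator smallness of $D_B\Pi-I$ on $T_A\mathcal{N}$ near $A$; this should come from the analytic formulas of Lemma \ref{derivative-bounds0} for $D_B\Pi$ (which express it through integration against $\mu$ and resolvents $(I-\mathcal{L}_{B}|_{E_0})^{-1}$) combined with the uniform $C^0$ and $L^2(\mu_A)$ control provided by the Fourier-like basis. Granting it, $|v^{\mathsf T}E_m v|\le 2C\delta+C^2\delta^2\le\tfrac12$ after a final shrinking of $r$, and since $E_m$ is symmetric, $\|E_m\|_{op}\le\tfrac12$.

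With $\|E_m\|_{op}\le\tfrac12$ in hand, $(M_m)^{-1}=\sum_{k\ge0}(-E_m)^k$ converges in operator norm with $\|(M_m)^{-1}\|_{op}\le 2$, and because $|N_{ij}|=|e_i^{\mathsf T}N e_j|\le\|N\|_{op}$ for any matrix $N$, this gives $\|(M_m)^{-1}\|_{sup}\le 2$, uniformly in $m$ and over $B_r(A)$. For the derivatives I would differentiate $M_m(M_m)^{-1}=I_m$ to obtain $\frac{d}{dt_n}(M_m)^{-1}=-(M_m)^{-1}\big(\frac{d}{dt_n}M_m\big)(M_m)^{-1}$, and bound $\frac{d}{dt_n}M_m$ uniformly: by Lemma \ref{Leibniz}, $\frac{d}{dt_n}(M_m)_{ij}=\int\big(\tfrac{dX_i}{dt_n}X_j+X_i\tfrac{dX_j}{dt_n}\big)d\mu_B+\int X_iX_jX_n\,d\mu_B$, with $\frac{dX_i}{dt_n}=D^2_B\Pi(\bar e_n,\bar e_i)$, which Proposition \ref{dpi} controls in $C^0$ (and, by the same analytic formulas, in $L^2(\mu_A)$) uniformly near $A$; together with Corollary \ref{basis-local-bound} (each $X_i(B)$ has sup norm $\le\rho$) and condition I) of Definition \ref{suit}, this bounds $\frac{d}{dt_n}M_m$ entrywise and, via the same quadratic-form argument as above, in operator norm, uniformly in $m,n$ and over $B_r(A)$. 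Estimating $\|\frac{d}{dt_n}(M_m)^{-1}\|_{sup}\le\|(M_m)^{-1}\|_{op}^2\,\|\frac{d}{dt_n}M_m\|_{op}$ and choosing $C$ accordingly then finishes the proof. The whole difficulty is concentrated in the dimension-free $L^2(\mu_A)$ bounds of the previous paragraph; the rest is bookkeeping.
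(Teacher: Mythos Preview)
Your overall architecture coincides with the paper's: both write $M_m$ as a perturbation of the identity via Lemma~\ref{basis}/Proposition~\ref{dpi}, differentiate $(M_m)^{-1}=-(M_m)^{-1}\bigl(\tfrac{d}{dt_n}M_m\bigr)(M_m)^{-1}$, and bound the entries of $\tfrac{d}{dt_n}M_m$ through Lemma~\ref{Leibniz} together with Corollary~\ref{basis-local-bound}. The paper simply asserts that entrywise closeness of $M_m$ to $I_m$ yields $(M_m)^{-1}$ close to the identity \emph{uniformly in $m$}, and likewise that $\tfrac{d}{dt_n}(M_m)^{-1}$ is uniformly close to $\tfrac{d}{dt_n}M_m$; it does not argue either point.

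You are right to flag this, and your route through the $\ell^2$ operator norm and a Neumann series is the natural repair. But the step you yourself label ``the main obstacle'' is not actually carried out, and it does not follow from what is written in the paper. Proposition~\ref{dpi} bounds $R=D_B\Pi-I$ as an operator on H\"older functions with the \emph{sup} norm; for $w=\sum_{i=1}^m v_ie_i$ with $|v|=1$ one only has $\|w\|_\infty\lesssim\sqrt{m}\,\beta$, so $\|Rw\|_\infty\le\delta\|w\|_\infty$ gives nothing dimension-free. Extracting a genuine $L^2\to L^2$ bound from the formulas in Lemma~\ref{derivative-bounds0} is plausible but is work you have not done. There is a second wrinkle you elide: the Gram matrix $M_m$ uses the inner product of $L^2(\mu_B)$, while the orthonormality of the $e_i$ and your quadratic-form identity live in $L^2(\mu_A)$; your displayed equation $v^{\mathsf T}E_mv=2\langle w,Rw\rangle+\|Rw\|^2$ silently mixes the two and needs a comparison of $\mu_A$ and $\mu_B$ on the relevant functions for $B$ near $A$. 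In short: your scaffolding is the paper's, you are more scrupulous about uniformity in $m$ than the paper is, but the hard analytic input you correctly isolate remains unproved in your write-up (and is simply asserted in the paper).
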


\begin{proof}

The coefficients of the first fundamental form $M_{m}$ at a point $B \in B_{r}(A)$ are
$$\langle X_{i}(B),X_{j}(B)\rangle  = \int X_{i}(B)X_{j}(B) d\mu_{B} .$$

By Lemma \ref{basis} and Lemma \ref{dpi}, the matrix $M_{m}$ is a perturbation of the identity at every point $B \in B_{r}(A)$. This yields that the matrix
$(M_{m})^{-1}$ is close to the identity and its norm is uniformly (in $m$) bounded above in $B_{r}(A)$.

As for the derivative $\frac{d}{dt_{n}}((M_{m})^{-1}) =( M_{m})^{-1}\frac{d}{dt_{n}}(M_{m})(M_{m})^{-1}$, we notice that at the point $A$ we have $M_{m}= I_{m}$, the $m \times m$ identity matrix, and the coefficients of $\frac{d}{dt_{n}}(M_{m})$ are the derivatives of the terms $\langle X_{i},X_{j}\rangle $.  According to Lemma \ref{Leibniz} we have
$$ \frac{d}{dt_{n}}\langle X_{i},X_{j}\rangle  = \int (\frac{d}{dt_{n}}(X_{i}) X_{j} + X_{i} \frac{d}{dt_{n}}(X_{j}) + X_{i}X_{j}X_{n})d\mu_{B}. $$
Let us estimate the sup norms of each of these terms at a point $B \in B_{r}(A)$. First observe that $B = \Pi(s_{1},s_{2},..,s_{m})$ for
some vector $(s_{1},s_{2},..,s_{m} ) $ close to $(0,0,..0)$. Then we have

$$\frac{d}{dt_{n}}(X_{i}(B) ) = \frac{d}{dt_{n}}(D_{(s_{1},s_{2},..,s_{m} )}\Pi (e_{i}) )= \frac{d^{2}}{dt_{n}dt_{i}}(\Pi((s_{1},s_{2},..,s_{m} ) + t_{i} e_{i}+ t_{n}e_{n})) .$$
The sup norm of such a term is bounded above by $1+ \delta$ according to Proposition \ref{dpi}, therefore, the sup norms of the integrals $\int \frac{d}{dt_{n}}(X_{i}) X_{j}d\mu_{B}$ and $ \int X_{i} \frac{d}{dt_{n}}(X_{j} )d\mu_{B}$ are bounded above by $1+ \delta$.

 Moreover, the term $\int X_{i}X_{j}X_{n}d\mu_{B}$ satisfies

$$ \mid \int X_{i}X_{j}X_{n}d\mu_{B} \mid \leq \mid X_{i}(B)X_{j}(B)X_{n}(B) \mid_{\infty} , $$
and by Corollary \ref{basis-local-bound}, we have that $\mid X_{i}(B)X_{j}(B)X_{n}(B) \mid_{\infty} \leq (\rho)^{3}$, where $\rho $ is the upper bound for the elements of the basis in Corollary \ref{basis-local-bound}. Joining the above estimates we get that the coefficients of the first fundamental form $M_{m}$ are bounded above by $2(1+\delta) + (\rho)^{3}$ for every $B \in B_{r}(A)$. Since the matrices $M_{m}$ are uniformly close to the identity, the matrices $\frac{d}{dt_{n}}((M_{m})^{-1})$ are uniformly close to
$\frac{d}{dt_{n}}(M_{m})$ in $B_{r}(A)$ thus proving the lemma.

\end{proof}

The proof of Proposition \ref{system-bound} follows from Corollary \ref{basis-local-bound} and Lemma \ref{bounded-coeff}.
\medskip

\medskip

\subsection{First order systems of ordinary differential equations equivalent to first order PDE's}

Let us start this subsection with some standard basic results of the theory of first order partial differential equations. We follow {\bf Chapter 3 in} the book by L. C. Evans \cite{Evans}, but the subject is quite well known and there are many other classical references.

Let $F: \mathbb{R}^{n} \times \mathbb{R} \times \bar{U} : \longrightarrow \mathbb{R} $ be a $C^{2}$ function where $U$ is an open subset of $\mathbb{R}^{n}$ and $\bar{U}$ is its closure.
The system of first order, partial differential equations defined by $F$ is given by
$$ F(Du, u, x) =0$$
where $u: \bar{U} \longrightarrow \mathbb{R}$ is the unknown. Let us write
$$F(p,z,x) = F(p_{1}, p_{2},..,p_{n}, z, x_{1}, x_{2},..,x_{n})$$ and denote by
$$ D_{p}F = (F_{p_{1}}, F_{p_{2}}, ..,F_{p_{n}}), \mbox{ } D_{z}F = F_{z}, \mbox{ } D_{x}F = (F_{x_{1}}, F_{x_{2}}, ..,D_{x_{n}}) $$
the differentials of $F$ with respect to the variables $p, z, x$. The theory of the characteristics associates a system of first order differential equations to the system $F(Du,u,x)=0$ in the following way. We look for smooth curves $x(s) = (x^{1}(s),..,x^{n}(s))$
for $s \in \mathcal{I}$ defined in some open interval, and consider the function $z(s) = u(x(s))$. Let $p(s) = Du(x(s))$, where $p(s) = (p^{1}(s),..,p^{n}(s))$ is given by $p^{i}(s) = u_{x_{i}}(x(s))$. Differentiating with respect to $s$ we obtain the characteristic equations
\begin{eqnarray*}
p'(s) & = & -D_{x}F(p(s),z(s),x(s)) - D_{z}F(P(s),z(s),x(s))p(s) \\
z'(s) & = & D_{p}F(p(s),z(s),x(s))p(s) \\
x'(s) & = & D_{p}F(p(s),z(s),x(s))
\end{eqnarray*}

This setting extends of course to smooth finite dimensional manifolds, by taking local coordinate systems.

Euler-Lagrange equations in a Riemannian manifold, a system of second order differential equations, is equivalent to a first order system of partial differential equations in the tangent bundle of the manifold. The above procedure applied to this system gives rise to the Hamilton equations in the cotangent bundle, a system of ordinary first order differential equations.

Euler-Lagrange equations in the case of Riemannian metrics are expressed in terms of the Levi-Civita connection by the system
$$ \langle \nabla_{X}X, X_{i} \rangle = 0$$
where $X$ is the vector field tangent to a geodesic and $X_{i}$, $i=1,2,..,n$ is a coordinate basis of the tangent space of the $n$-dimensional manifold. This is exactly what we did in the previous subsection for each submanifold $S_{m}$. The tangent space $T\mathcal{N}$ and the cotangent space $T^{*}\mathcal{N}$ of $\mathcal{N}$ are analytic manifolds as well, and we are looking for solutions of Euler-Lagrange equations in finite dimensional submanifolds of $T\mathcal{N}$.

Therefore, as a consequence of Lemma \ref{bounded-coeff} and Theorem \ref{Picard} in the last section, we get a result on the existence of solutions for the partial differential equation of geodesics under the Fourier-like condition.

\begin{lemma} \label{geod-exist}
Let $A \in \mathcal{N}$ be the normalized H\"older potential associated to the equilibrium probability $\mu$ on $M=\{0,1\}^\mathbb{N}$.
Then there exist $\rho >0$, $D>0$, such that given a unit vector $X(0) \in T_{A}\mathcal{N}$ there exists
a unique analytic curve $\gamma:(-\rho, \rho ) \longrightarrow \mathcal{N}$ such that $\gamma(0) =A$, and $\gamma'(t) =X(t)$ is the unique solution of the equation (\ref{ney}) whose initial condition is $X(0)$. The solution $X(t)$ is defined in an interval $\mid t \mid \leq \rho$,
and the norms of $X(t)$, $X'(t)$ are bounded by $D$ for every $\mid t \mid \leq \rho$. An analogous result holds for every $Q \in B_{r}(A)$, where $r>0$ is given in Proposition \ref{system-bound}.
\end{lemma}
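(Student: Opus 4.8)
The plan is to realize the geodesic initial value problem on $\mathcal{N}$ as a limit of the corresponding problems on the finite-dimensional submanifolds $S_{m}$ constructed in Lemma \ref{basis}, and to push the limit through using the uniform bounds supplied by Proposition \ref{system-bound} (hence Corollary \ref{basis-local-bound} and Lemma \ref{bounded-coeff}). First I would fix the Fourier-like orthonormal basis $e_{n}$ of $T_{A}\mathcal{N}$ from Lemma \ref{maxent-basis}, form the coordinate charts $\Phi : U_{m} \to S_{m}$, $\Phi(t_{1},\dots,t_{m}) = \Pi(\sum t_{i}e_{i})$, and write the geodesic system $\Sigma_{m}$ in the quasilinear first-order form derived in the previous subsection, namely
$$\frac{d}{dt}(\bar{x}_{n}) = - \sum_{i=1}^{m} \frac{d}{dt_{n}}(\langle (M_{m})^{-1}_{i}, \bar{X}\rangle)\bar{x}_{i},\qquad n=1,\dots,m.$$
By Proposition \ref{system-bound} the coefficients of these systems are bounded above, uniformly in $m$ and uniformly on a common ball $B_{r}(A)$, by a constant $D$. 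I would then invoke the characteristics procedure recalled at the start of this subsection: $\Sigma_{m}$ is equivalent to a first-order system of ordinary differential equations (the characteristic equations, respectively Hamilton's equations) in the tangent, respectively cotangent, bundle of $S_{m}$, whose right-hand sides are quadratic in the phase variables with coefficients controlled by $D$. Applying the Picard–Lindelöf theorem (Theorem \ref{Picard}) to this ODE system with initial data $(\gamma_{m}(0),\gamma'_{m}(0)) = (A, X_{m}(0))$, where $X_{m}(0)$ is the orthogonal projection of the prescribed unit vector $X(0)$ onto $T_{A}S_{m}$, gives a solution $\gamma_{m}:(-\rho,\rho)\to S_{m}$ on a time interval $(-\rho,\rho)$ whose length depends only on $D$ and on the size of the initial data, hence is independent of $m$; moreover $\|X_{m}(t)\|, \|X'_{m}(t)\| \le D$ on that interval.

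The next step is the passage to the limit $m \to \infty$. Since $X(0) \in T_{A}\mathcal{N} = \overline{\bigcup_{m} T_{A}S_{m}}$, the projected initial conditions $X_{m}(0)$ converge to $X(0)$ in $L^{2}(\mu_{A})$, and by the Fourier-like bounds (condition I of Definition \ref{suit}) the relevant quantities are also controlled in $C^{0}$. The uniform bounds on $X_{m}(t), X'_{m}(t)$, together with the uniform Lipschitz control of the vector fields $X \mapsto$ (right-hand side of the characteristic system) coming again from Proposition \ref{dpi} and Corollary \ref{basis-local-bound}, show that the sequence of curves $\gamma_{m}$ is uniformly bounded and equicontinuous (indeed uniformly $C^{1}$) on $(-\rho,\rho)$ with values in a common ball of $\mathcal{N}$. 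A compactness/Arzelà–Ascoli argument in the appropriate function space then extracts a subsequence $\gamma_{m_{k}}$ converging to a $C^{1}$ curve $\gamma:(-\rho,\rho)\to\mathcal{N}$ with $\gamma(0) = A$, $\gamma'(0) = X(0)$, and $\|\gamma'(t)\|, \|\gamma''(t)\| \le D$. Because the finite-dimensional geodesic equations $\langle \nabla_{X_{m}}X_{m}, X_{i}\rangle = 0$, $i \le m$, are stable under this convergence — the Levi-Civita connection of $\mathcal{N}$ restricts to that of $S_{m}$ only up to a second-fundamental-form term, so one must check that this term is killed in the limit, which is where the density of $\bigcup_{m} T_{A}S_{m}$ and the uniform estimates are used — the limit curve satisfies $\langle \nabla_{\gamma'}\gamma', Y\rangle = 0$ for every $Y$ in a dense subspace of $T_{\gamma(t)}\mathcal{N}$, hence for all $Y$, i.e. $\gamma$ is a genuine geodesic of $\mathcal{N}$ solving the prescribed initial value problem. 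Analyticity of $\gamma$ follows from the analyticity of the charts $\Pi$ (Lemma \ref{derivative-bounds0}(1)) and of the coefficients of the limiting equation, via the classical Cauchy–Kovalevskaya / analytic ODE argument.

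Uniqueness is obtained by a Grönwall argument applied directly to the first-order equation (\ref{ney}) on $\mathcal{N}$: two solutions with the same initial condition $(A, X(0))$ give, after pairing against the basis $X_{n}$, two solutions of the same quasilinear system with the same data, and the uniform Lipschitz bound on the coefficients forces them to coincide on $(-\rho,\rho)$. Finally, to get the statement for an arbitrary base point $Q \in B_{r}(A)$ rather than $A$ itself, I would repeat the entire construction with $A$ replaced by $Q$: the key point is that all the estimates — Proposition \ref{dpi}, Corollary \ref{basis-local-bound}, Lemma \ref{bounded-coeff}, Proposition \ref{system-bound} — were established uniformly over the whole ball $B_{r}(A)$, so the same constants $\rho, D$ work, with the coordinate charts now centered at $Q$. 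The main obstacle I anticipate is precisely the limiting step for the geodesic equation: controlling the discrepancy between $\nabla^{\mathcal{N}}_{X_{m}}X_{m}$ and $\nabla^{S_{m}}_{X_{m}}X_{m}$ (the second fundamental form of $S_{m}$ in $\mathcal{N}$) and showing it tends to $0$ as $m \to \infty$, since $\mathcal{N}$ is infinite-dimensional and one has no a priori curvature bounds — this is exactly the place where the Fourier-like hypothesis, through the uniform $C^{0}$ and $L^{2}$ bounds on the basis and its $\Pi$-images, does the essential work.
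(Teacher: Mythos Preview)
Your proposal follows essentially the same route as the paper: convert each finite-dimensional system $\Sigma_{m}$ to a first-order ODE via characteristics, solve it by Theorem \ref{Picard} with the uniform coefficient bound from Proposition \ref{system-bound} to obtain solutions $\gamma_{m}$ on a common interval $(-\rho,\rho)$ with uniform $C^{1}$ bounds, extract a convergent subsequence by an Arzel\`a--Ascoli argument (the paper's Proposition \ref{Arzela-2}), and then verify that the limit satisfies $\langle \nabla_{X}X, v_{k}\rangle = 0$ for every $k$.

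The one place where you diverge from the paper, and unnecessarily so, is your identification of the ``main obstacle'' as controlling the second fundamental form of $S_{m}$ in $\mathcal{N}$. This concern does not arise. The system $\Sigma_{m}$ in (\ref{ney}) was derived directly from the condition $\langle \nabla^{\mathcal{N}}_{X}X, X_{n}\rangle = 0$ for $n\leq m$, using the \emph{ambient} Levi--Civita connection of $\mathcal{N}$ throughout; it is not the intrinsic geodesic equation of $(S_{m}, g|_{S_{m}})$, though the two agree when paired against tangent vectors to $S_{m}$. Hence the paper's limiting step is cleaner than you anticipate: writing the ODE as $\frac{d}{dt}Y = F_{m}(Y)$, the uniform Lipschitz bound on $F_{m}$ gives $\|X' - F_{m}(X)\|_{\infty}\leq 2\epsilon$ for $m$ large, which is literally the statement $|\langle \nabla^{\mathcal{N}}_{X}X, v_{k}\rangle|\leq \epsilon$ for all $k\leq m$. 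Letting $\epsilon\to 0$ forces $\nabla^{\mathcal{N}}_{X}X = 0$ directly, with no second-fundamental-form term ever appearing. Your use of projected initial data $X_{m}(0)$ is in fact a bit more careful than the paper, which writes $\gamma_{m}'(0)=X(0)$ without comment.
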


\begin{proof}
Let us show the statement for $A$, the statement for $Q \in B_{r}(A)$ follows from the same arguments. By the theory of first order partial differential equations, the system (\ref{ney}) that is a second order, partial differential system in the curve $\gamma(t)$
is equivalent to a system of first order ordinary differential
equations $\frac{d}{dt}Y = F_{m}(Y)$ where the function $F_{m}$ depends on the first fundamental form $A$ and its derivatives with respect to the
coordinates $t_{n}$. These functions have uniformly bounded norm in the neighborhood $B(r)$ and are analytic. Then, Theorem \ref{Picard}
implies the existence and uniqueness of
solutions of the ordinary differential equations, namely, there exists $\rho>0$ such that the solution $\gamma_{m}(t)$ of (1) with initial condition
$\gamma_{m}(0) = A$, $\gamma_{m}'(0) = X(0)$, is unique and defined in $(-\rho, \rho)$.
$$ \frac{d}{dt} \parallel Y \parallel \leq \parallel F \parallel \parallel Y \parallel $$
which yields that

The uniform bound for the sup norm of $F_{m}$ in $B(r)$ implies that there exists $\rho>0$ such that
the analytic solutions $\gamma_{m}(t)$ are defined in $(-\rho, \rho)$ and are uniformly bounded in this interval.

Then Theorem \ref{Arzela-2} implies that there exists a convergent subsequence with limit $\gamma(t)$ analytic in the interval $(-\rho, \rho)$.
The function $\gamma(t)$ is tangent to the curve of vectors $X(t)$ that s the limit of the convergent subsequence of the curves $\gamma_{m}'(t) = X_{m}(t)$ in $(-\rho, \rho)$.
\bigskip

\textbf{Claim:} The curve $\gamma(t)$ is a geodesic.
\bigskip

Since $X_{m}(t)$ converges uniformly to $X(t)$ in the interval $(-\rho, \rho)$ we have that given $\epsilon >0$ there exists $m_{\epsilon}$ such that for every $m \geq m_{\epsilon}$
we have
$$\parallel F_{m}(X_{m}'(t) ) - F_{m}(X(t)) \parallel_{\infty} \leq k\parallel X_{m}(t)) - X(t) \parallel_{\infty} \leq \epsilon$$
where $k$ is a constant depending on the (uniform) bounds of the first derivatives of the functions $F_{m}$. So we get that $X(t)$ is an
approximate solution of the systems defined by the functions $F_{m}$:
\begin{eqnarray*}
\parallel X' - F_{m}(X) \parallel_{\infty} & \leq & \parallel X' - X_{m}' \parallel_{\infty}+ \parallel X_{m}' - F_{m}(X_{m}) \parallel_{\infty }+ \parallel F_{m}(X_{m} - F_{m}(X) \parallel_{\infty} \\
& \leq & 2\epsilon
\end{eqnarray*}
if we choose $m_{\epsilon}$ such that $\parallel X_{m}'- X ' \parallel_{\infty} <\epsilon$ for every $m \geq m_{\epsilon}$ as well. Now, notice that
the equation $\frac{d}{dt} Y = F_{m}(Y)$ is equivalent to the system $\langle \nabla_{Y} Y,v_{k} \rangle =0$, for every $0<k \leq m$, which means that
$$ \mid \langle \nabla_{X} X, v_{k} \rangle \mid \leq \epsilon$$
for every $ 0< k \leq m$. Since $\epsilon$ may be chosen arbitrarily, we conclude that $\langle \nabla_{X} X, v_{m} \rangle =0$ for every $m$, which implies that
the vector field $\nabla_{X}X$ is identically zero, because the collection of the vectors $v_{m}$ is a base for the $L^{2}$ inner product in $T\mathcal{N}$.
This yields that the curve $\gamma(t)$ is a geodesic as we claimed.

\end{proof}

\subsection{ On the existence and uniqueness of solutions of differential equations in $\mathcal{N}$}
\bigskip

Let us now proceed to the proof of Picard's Theorem in our infinite dimensional setting. We start with the Arzela-Ascoli theorem. We shall state the main results for the shift and we claim that for the case of expanding maps $T(x) = 2x (mod. 1)$ in $S^{1}$ the results one can get are analogous.

\begin{theorem} \label{Arzela-Ascoli}
Let $(X,d)$ be a second countable compact metric space (namely, there exists a countable dense subset).
Let $\mathcal{F}$ be a family of functions $f : X \longrightarrow \mathbb{R}$ that is uniformly bounded and equicontinuous. Then
every sequence in $\mathcal{F}$ has a convergent subsequence in the set of continuous functions.
\end{theorem}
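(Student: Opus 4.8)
The statement to prove is the classical Arzelà–Ascoli theorem for a second countable compact metric space, so the plan is to follow the standard diagonal argument adapted to the setting at hand.

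\medskip

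The plan is to proceed in three steps. First I would fix a countable dense subset $\{x_k\}_{k \in \mathbb{N}} \subset X$, which exists by second countability, and take an arbitrary sequence $\{f_n\} \subset \mathcal{F}$. Since $\mathcal{F}$ is uniformly bounded, the sequence of real numbers $\{f_n(x_1)\}_n$ lies in a compact interval, hence has a convergent subsequence $\{f_{n^{(1)}_j}\}$; passing to a further subsequence, $\{f_{n^{(1)}_j}(x_2)\}$ converges as well, and inductively one builds nested subsequences $\{f_{n^{(k)}_j}\}_j$ each converging at $x_1, \dots, x_k$. The diagonal sequence $g_j := f_{n^{(j)}_j}$ then converges at every point $x_k$ of the dense set. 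This step is entirely routine.

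\medskip

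Second, I would upgrade pointwise convergence on the dense set to uniform convergence on $X$ using equicontinuity together with compactness. Given $\varepsilon > 0$, equicontinuity yields for each $x \in X$ a $\delta_x > 0$ with $|g_j(y) - g_j(x)| < \varepsilon/3$ whenever $d(x,y) < \delta_x$, uniformly in $j$; by compactness finitely many balls $B(x_{i_1}, \delta_{x_{i_1}}/2), \dots, B(x_{i_N}, \delta_{x_{i_N}}/2)$ cover $X$, and in each such ball one may choose a point of the dense set. Since $\{g_j\}$ converges (hence is Cauchy) at each of these finitely many dense points, for $j, \ell$ large one gets the standard three-$\varepsilon$ estimate $|g_j(y) - g_\ell(y)| < \varepsilon$ for all $y \in X$. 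Thus $\{g_j\}$ is uniformly Cauchy.

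\medskip

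Third, I would invoke completeness of $(C(X), \|\cdot\|_\infty)$ (valid since $X$ is compact, so continuous functions are bounded and the uniform limit of continuous functions is continuous) to conclude that $\{g_j\}$ converges to some $g \in C(X)$ in the sup norm, giving the desired convergent subsequence in the space of continuous functions. The only mild subtlety — and the closest thing to an obstacle — is making sure the dense subset can be used to index both the diagonal extraction and the finite subcover simultaneously; this is handled cleanly by noting that every ball in the finite cover meets the dense set, so the Cauchy property at dense points suffices. Everything else is a verbatim transcription of the classical proof.
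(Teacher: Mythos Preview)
Your proposal is correct and is precisely the standard diagonal-plus-equicontinuity argument that the paper itself invokes, since the paper's own proof consists of the single remark that it ``follows from the same steps of the usual version of the theorem for compact subsets of $\mathbb{R}^{n}$.'' You have simply written out those steps in full, so there is nothing to add.
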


\begin{proof}
The proof follows from the same steps of the usual version of the theorem for compact subsets of $\mathbb{R}^{n}$.
\end{proof}

The above implies:

\begin{lemma} \label{shift-Arzela}
Let $\Sigma = \{0, 1\}^{\mathbb{N}}$, endowed with the metric
$$d(\{ a_{n}\}, \{b_{n}\}) = \frac{1}{2}\sum_{i=0}^{\infty} \frac{\mid a_{i} - b_{i} \mid}{2^{i}}.$$
Let $\operatorname{Hol}_{C,\alpha}(\Sigma)$ be the set of H\"{o}lder continuous functions $f : \Sigma \longrightarrow \mathbb{R}$ with constant $C$ and exponent $\alpha$ endowed
with the sup norm. Then, every subset of $\operatorname{Hol}_{C, \alpha}$ of uniformly bounded functions is precompact.
\end{lemma}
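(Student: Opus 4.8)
The plan is to deduce the statement directly from the Arzel\`a--Ascoli theorem (Theorem \ref{Arzela-Ascoli}) applied to $X=\Sigma$, once the two hypotheses of that theorem --- uniform boundedness and equicontinuity --- have been checked for a uniformly bounded subset $\mathcal{G}\subset Hol_{C,\alpha}(\Sigma)$, and then to verify that the resulting limit functions still lie in $Hol_{C,\alpha}(\Sigma)$, so that the closure of $\mathcal{G}$ is compact inside $Hol_{C,\alpha}(\Sigma)$ and not merely inside $C(\Sigma)$.

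First I would observe that $(\Sigma,d)$ is a compact, second countable metric space. Indeed the metric $d$ induces the product topology on $\{0,1\}^{\mathbb{N}}$: two sequences are $d$-close exactly when they agree on a long initial segment, so the cylinder sets form a (countable) basis of the topology, and $\Sigma$ is compact (by Tychonoff, or by a direct diagonal argument on $\{0,1\}^{\mathbb{N}}$). Hence the hypotheses on $X$ in Theorem \ref{Arzela-Ascoli} are met. Second, I would note that the whole family $Hol_{C,\alpha}(\Sigma)$ is equicontinuous: for any $f$ in this family and any $x,y\in\Sigma$ one has $|f(x)-f(y)|\le C\,d(x,y)^{\alpha}$, so the single modulus of continuity $\omega(\delta)=C\delta^{\alpha}$ serves for the entire family. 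Combining equicontinuity with the assumed uniform boundedness of $\mathcal{G}$, Theorem \ref{Arzela-Ascoli} yields that every sequence $(f_n)$ in $\mathcal{G}$ has a subsequence $(f_{n_k})$ converging in the sup norm to some continuous $f_\infty:\Sigma\to\mathbb{R}$.

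The last step is to check that $f_\infty\in Hol_{C,\alpha}(\Sigma)$. If $f_{n_k}\to f_\infty$ uniformly and each $f_{n_k}$ satisfies $|f_{n_k}(x)-f_{n_k}(y)|\le C\,d(x,y)^{\alpha}$, then fixing $x,y$ and letting $k\to\infty$ gives $|f_\infty(x)-f_\infty(y)|\le C\,d(x,y)^{\alpha}$; thus $Hol_{C,\alpha}(\Sigma)$ is closed in $(C(\Sigma),\|\cdot\|_\infty)$, so the closure of $\mathcal{G}$ taken in $Hol_{C,\alpha}(\Sigma)$ coincides with its closure in $C(\Sigma)$, which is compact by the previous paragraph. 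Therefore $\mathcal{G}$ is precompact in $Hol_{C,\alpha}(\Sigma)$, as asserted.

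I do not expect a genuine obstacle in this lemma; it is a packaging of classical facts. The only two points that merit a line of care are the identification of the metric topology on $\Sigma$ with the product topology (so that Theorem \ref{Arzela-Ascoli} can be invoked with a second countable compact space), and the stability of the H\"older estimate under uniform limits, which is what guarantees precompactness in the H\"older class rather than only in $C(\Sigma)$.
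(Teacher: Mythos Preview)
Your proof is correct and follows essentially the same route as the paper: verify that $(\Sigma,d)$ is a second countable compact metric space, observe that the common H\"older bound $|f(x)-f(y)|\le C\,d(x,y)^{\alpha}$ gives equicontinuity of the whole family, and invoke Theorem~\ref{Arzela-Ascoli}. The paper exhibits the countable dense set concretely as the periodic sequences rather than arguing via cylinder sets, and it stops at the existence of a convergent subsequence in $C(\Sigma)$; your additional paragraph checking that $Hol_{C,\alpha}(\Sigma)$ is closed under uniform limits (so that precompactness holds in the H\"older class itself) is a worthwhile clarification that the paper omits.
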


\begin{proof}
First of all, observe that $(\Sigma,d)$ is a compact metric space with a countable dense subset, the set of periodic sequences of $0$'s and $1$'s. Then Theorem \ref{Arzela-Ascoli}
holds, and since the set of functions in $\operatorname{Hol}_{C,\alpha}$ is equicontinuous, every uniformly bounded subset has a convergent subsequence.
\end{proof}

Next, let us study the precompactness of the set of analytic curves of normalized potentials
$\gamma: (a,b) \longrightarrow \operatorname{Hol}_{C,\alpha}(X)$ endowed with the sup norm.
By analytic we mean that $\gamma(t)$ depends analytically on the parameter $t \in (a,b)$.

\begin{proposition} \label{Arzela-2}
Let $\Gamma_{C,\alpha}([a,b],\Sigma)$ be the set of curves $\gamma : [a,b] \longrightarrow \operatorname{Hol}_{C,\alpha}(\Sigma)$ of normalized potentials
which are analytic in $(a , b)$ and continuous in $[a,b]$, endowed with the sup norm. Then every family of functions in $\Gamma_{C.\alpha}([a,b],\Sigma)$
that is uniformly bounded and equicontinuous has a convergent subsequence. Namely, there exists a continuous function $\gamma_{\infty} :[a,b] \longrightarrow
\operatorname{Hol}_{C,\alpha}(\Sigma)$ that is analytic on $(a,b)$ and a sequence of functions in $\Gamma_{C.\alpha}([a,b],\Sigma)$ converging uniformly to $\gamma_{\infty}$.
\end{proposition}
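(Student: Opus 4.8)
The plan is to derive the statement from the two-variable form of Arzela-Ascoli already available as Theorem \ref{Arzela-Ascoli}, and then to upgrade the continuity of the limit curve to analyticity by a Vitali-type argument. First I would identify each curve $\gamma\in\Gamma_{C,\alpha}([a,b],\Sigma)$ with the function of two variables $\Gamma(t,x)=\gamma(t)(x)$ on the product space $[a,b]\times\Sigma$. This product is a compact metric space and is second countable: $[a,b]$ is separable, $\Sigma$ has the countable dense subset of periodic sequences of $0$'s and $1$'s, and a finite product of separable metric spaces is separable.

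Next, given a uniformly bounded and equicontinuous family $\{\gamma_k\}\subset\Gamma_{C,\alpha}([a,b],\Sigma)$, I would check that the associated family $\{\Gamma_k\}$ on $[a,b]\times\Sigma$ is again uniformly bounded and equicontinuous. Uniform boundedness is immediate, $|\Gamma_k(t,x)|\le\|\gamma_k(t)\|_\infty\le\sup_{s\in[a,b]}\|\gamma_k(s)\|_\infty$. For equicontinuity, the triangle inequality gives
$$|\Gamma_k(t,x)-\Gamma_k(t',x')|\le|\gamma_k(t)(x)-\gamma_k(t)(x')|+\|\gamma_k(t)-\gamma_k(t')\|_\infty\le C\,d(x,x')^{\alpha}+\|\gamma_k(t)-\gamma_k(t')\|_\infty,$$
where the first term is controlled by the common H\"older modulus of the functions in $Hol_{C,\alpha}(\Sigma)$ and the second is controlled uniformly in $k$ by the equicontinuity of the curves as maps into the sup-normed space. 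Theorem \ref{Arzela-Ascoli} then produces a subsequence $\Gamma_{k_j}$ converging uniformly on $[a,b]\times\Sigma$ to a continuous function $\Gamma_\infty$; writing $\gamma_\infty(t)=\Gamma_\infty(t,\cdot)$, this says exactly that $\gamma_{k_j}\to\gamma_\infty$ in the sup norm of $\Gamma_{C,\alpha}([a,b],\Sigma)$. It remains to see that $\gamma_\infty$ is an admissible limit: each $\gamma_\infty(t)$ inherits the H\"older bound $(C,\alpha)$ since a uniform limit of functions sharing one H\"older modulus keeps that modulus; each $\gamma_\infty(t)$ is again a normalized potential because the condition $\mathcal{L}_{\gamma(t)}(1)=1$ is closed under uniform convergence (the map $B\mapsto\mathcal{L}_B(1)$ is a finite sum of exponentials of $B$ and is continuous in the sup norm on bounded sets); and $\gamma_\infty:[a,b]\to Hol_{C,\alpha}(\Sigma)$ is continuous by uniform convergence.

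The delicate step — and the one I expect to be the main obstacle — is the analyticity of $\gamma_\infty$ on $(a,b)$. Uniform convergence by itself does not preserve real-analyticity (by Weierstrass approximation every continuous function on an interval is a uniform limit of polynomials), so the conclusion cannot follow from Arzela-Ascoli alone; it must use the additional structure present wherever the proposition is applied. In Lemma \ref{geod-exist} the curves $\gamma_k$ are solutions of ordinary differential equations $\frac{d}{dt}Y=F_m(Y)$ whose right-hand sides are analytic with bounds uniform in $m$, so the complex-analytic Picard theorem (Theorem \ref{Picard}) extends them to holomorphic curves on one fixed complex neighborhood $U_{\mathbb{C}}$ of $(a,b)$, with bounds uniform in $k$. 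Under such a uniform holomorphic extension the Banach-space-valued Vitali convergence theorem applies: a locally uniformly bounded sequence of holomorphic maps on $U_{\mathbb{C}}$ that converges on the set $(a,b)$ (which has limit points in $U_{\mathbb{C}}$) converges locally uniformly on $U_{\mathbb{C}}$ to a holomorphic limit, whose restriction to $(a,b)$ is $\gamma_\infty$. Hence $\gamma_\infty$ is real-analytic on $(a,b)$. I would therefore either add to the statement the standing hypothesis that the family admits such a uniform holomorphic extension — which is automatic in every use of the proposition in this paper — or else weaken the conclusion to continuity on $[a,b]$ and insert the Vitali argument at the point of application in Lemma \ref{geod-exist}.
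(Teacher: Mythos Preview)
Your proposal is correct but takes a genuinely different route from the paper. The paper does not pass to the product space $[a,b]\times\Sigma$; instead it expands each curve in a power series $\gamma_n(t)=\sum_m a_m^n\,t^m$ about a fixed center, uses a common radius of convergence $r$ to bound the coefficient functions by $\|a_m^n\|_\infty\le r^{-m}$, applies Lemma~\ref{shift-Arzela} to each coefficient family in a diagonal argument to extract limits $A_m$, and then defines $\gamma_\infty(t)=\sum_m A_m t^m$. In that scheme analyticity of $\gamma_\infty$ is automatic from the power-series construction, and uniform convergence is checked by a tail estimate. Your product-space Arzela--Ascoli is cleaner for the convergence step and makes the role of equicontinuity in $t$ transparent; the paper's coefficient argument, on the other hand, builds analyticity into the limit from the start rather than recovering it afterward.

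Your caveat about analyticity is well taken and in fact sharper than the paper's treatment: the paper's proof tacitly assumes a uniform lower bound on the radius of convergence (note that later, in the proof of Theorem~\ref{complete}, the family is written $\Gamma_{C,\alpha,\nu}$ with exactly such a parameter $\nu$). Your proposed Vitali hypothesis of a common complex neighborhood is equivalent to this common-radius assumption, so once that hypothesis is made explicit the two arguments carry the same content.
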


\begin{proof}
Let $\gamma_{n} \in \Gamma_{C,\alpha}([a,b],\Sigma)$ be a sequence of uniformly bounded curves. For simplicity, let us suppose that $a = -r, b= r$ for some $0 <r \leq 1$, and let us center the series expansion at $t_{0}=0$ (for different center of expansion the argument is just analogous).
This implies that we get an expression in power series for each $\gamma_{n}(t)$ of the form
$$ \gamma_{n}(t) = \sum_{m=0}^{\infty} a_{m}^{n}(p) t^{m} $$
where $a_{m}^{n}: \Sigma \longrightarrow \mathbb{R}$ are functions in $\operatorname{Hol}_{C,\alpha}(\Sigma)$. Since the functions $\gamma_{n}$ are uniformly
bounded by a constant $L>0$ in $(-r, r)$, we have that $\parallel a_{0}^{n} \parallel_{\infty} \leq L$ for every $n$ and by Lemma \ref{shift-Arzela} there exists
a convergent subsequence $a_{o}^{n^{0}_{i}}$ whose limit is a function $A_{0}$. Since the radius of convergence of all the series is $r$, we have that
$\limsup_{n} (\mid a_{m}^{n}(p) \mid )^{\frac{1}{n}} = \frac{1}{r}$ and therefore
$$ \parallel a_{m}^{n} \parallel_{\infty} \leq \frac{1}{r^{m}}$$
for every $n, m$. So the family of functions $\mathcal{F}_{m} = \{ a_{m}^{n}\}$ is uniformly bounded and we can apply again Lemma \ref{shift-Arzela}.
So there exists a subsequence $n^{0}_{n^{1}_{j}}$ of the indices $n^{0}_{j}$ such that the functions $a_{m}^{n^{0}_{j}}$ converge to a function
$A_{1} \in \operatorname{Hol}_{C,\alpha}(\Sigma)$. By induction, we get a subsequence $\gamma_{N_{k}}$ of the functions $\gamma_{n}$ such that
the first $k+1$ coefficients of their series expansions converge to functions $A_{0}, A_{1}, .., A_{k}$ in $\operatorname{Hol}_{C,\alpha}(\Sigma)$.

Consider the function
$$ \gamma_{\infty}(t) = \sum_{m=0}^{\infty} A_{m}(t) . $$
By the choice of the $A_{m}$'s, the above series converges with the same convergence radius of the functions $\gamma_{n}$. Moreover, it is easy to check that
$\gamma_{\infty}(t)$ is a curve of functions in $\operatorname{Hol}_{C,\alpha}(\Sigma)$, and we have that the sequence $\gamma_{N_{k}}$ converges uniformly
on compact sets to $\gamma_{\infty}$ in the sup norm. Indeed, let $[a,b] \subset (-r,r)$, since the functions $\gamma_{n}$ are uniformly bounded given
$\epsilon >0$ there exists $m_{\epsilon}>0$ such that for every $n \in \mathbb{N}$, $k \geq m_{\epsilon}$ we have
$$ \mid \sum_{k}^{\infty} a_{k}^{n}(p) t ^{k}\mid \leq \epsilon $$
for every $ p \in \Sigma$. The same holds for the series $\gamma_{\infty}$. This yields
\begin{eqnarray*}
\parallel \gamma_{\infty}(t) -\gamma_{n}(t) \parallel_{\infty} & \leq & \sum_{m=0}^{m_{\epsilon}} \parallel A_{m} - a_{m}^{N_{k}} \parallel_{\infty} t^{m} +
\parallel \sum_{m_{\epsilon}+1}^{\infty} (A_{m} - a_{m}^{N_{k}})\parallel_{\infty} t^{m}\\
& \leq & \sum_{m=0}^{m_{\epsilon}} \parallel A_{m} - a_{m}^{n} \parallel_{\infty} t^{m} + 2\epsilon.
\end{eqnarray*}
Since the functions $a_{m}^{N_{k}}$ converge uniformly to the function $A_{m}$, we can chose $k$ large enough such that $\parallel (A_{m} - a_{m}^{N_{k}})\parallel_{\infty} \leq \frac{\epsilon}{m}$, and therefore
$$\parallel \gamma_{\infty}(t) -\gamma_{n}(t) \parallel_{\infty} \leq 3\epsilon, $$
for every $t \in [-r,r]$, and since $\epsilon$ can be chosen arbitrarily we get the lemma.
\end{proof}

Now, we can state Picard's Theorem for differential equations in $\mathcal{N}$.

\begin{theorem} \label{Picard}
Let $ F : [x,y] \times U \longrightarrow \operatorname{Hol}_{C,\alpha}(\Sigma)$ be an analytic function in $t \in (x,y)$ and in $\operatorname{Hol}_{C,\alpha}(\Sigma)$,
where $U$ is an open subset of $(\operatorname{Hol}_{C,\alpha}(\Sigma))^{n}$.
Then, given $(t_{0}, f_{1}, f_{2}, ..,f_{n}) \in (x,y) \times U$ there exists a unique solution of the differential equation $\frac{d}{dt}X(t) = F(t,X(t))$
defined in a certain interval $X: (t_{0}-\epsilon,t_{0} + \epsilon) \longrightarrow U$ that is analytic and satisfies $X(t_{0}) = (f_{1},f_{2},..,f_{n})$.
\end{theorem}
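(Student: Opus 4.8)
The plan is to convert the initial value problem into a fixed-point equation, solve it by the contraction mapping principle, and then upgrade the resulting continuous solution to an analytic one by combining the analyticity of $F$ with the power-series estimates already used in the proof of Proposition~\ref{Arzela-2}.

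First I would pass to the integral formulation: a continuous curve $X$ with $X(t_0)=(f_1,\dots,f_n)$ solves $\frac{d}{dt}X=F(t,X)$ on $(t_0-\epsilon,t_0+\epsilon)$ if and only if
$$ X(t)=(f_1,\dots,f_n)+\int_{t_0}^{t}F(s,X(s))\,ds . $$
Since $F$ is analytic on $[x,y]\times U$, it is in particular $C^1$, hence on a closed ball $\bar B_{r_0}(f_1,\dots,f_n)\subset U$ and for $|s-t_0|\le r_0$ there are constants $K,L>0$ with $\|F(s,z)\|_\infty\le K$ and $\|F(s,z)-F(s,w)\|_\infty\le L\|z-w\|_\infty$, the Lipschitz bound coming from the Cauchy estimate for the bounded linear map $D_zF$. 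Choosing $\epsilon<\min\{r_0,\,r_0/K,\,1/(2L)\}$, the operator $\Phi X(t)=(f_1,\dots,f_n)+\int_{t_0}^{t}F(s,X(s))\,ds$ maps the closed ball of radius $r_0$ around the constant curve into itself and is a $\tfrac12$-contraction on the Banach space of continuous curves $[t_0-\epsilon,t_0+\epsilon]\to C^0(\Sigma)^n$ with the sup norm; note that $\int_{t_0}^{t}F(s,X(s))\,ds$ is again $\alpha$-H\"older with H\"older constant enlarged by at most $\epsilon C$, so the iteration stays in a fixed H\"older class after slightly increasing $C$. By the Banach fixed point theorem $\Phi$ has a unique fixed point $X$, and differentiating the integral equation shows $X$ is the unique $C^1$ solution of the initial value problem.

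It remains to see that $X$ is analytic in $t$. Run the Picard iteration $X_0(t)\equiv(f_1,\dots,f_n)$, $X_{k+1}=\Phi X_k$; then $X_k\to X$ uniformly on $[t_0-\epsilon,t_0+\epsilon]$. Each $X_k$ is analytic in $t$: $X_0$ is constant, and if $X_k$ is analytic then $s\mapsto F(s,X_k(s))$ is a composition of analytic maps, hence analytic, and its term-by-term antiderivative is analytic, so $X_{k+1}$ is analytic. Writing $X_k(t)=\sum_{m\ge 0}a_m^k(t-t_0)^m$ with $a^k_m\in C^0(\Sigma)^n$, the method of majorants --- dominating the recursion that expresses $a^k_{m+1}$ through $a^k_0,\dots,a^k_m$ and the Taylor coefficients of $F$ at $(t_0,(f_1,\dots,f_n))$ by a scalar analytic majorant equation --- yields $\|a^k_m\|_\infty\le\rho^{-m}$ for some $\rho>0$ independent of $k$. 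Now, exactly as in the proof of Proposition~\ref{Arzela-2}, uniform convergence forces $a^k_m\to a_m$ in $C^0(\Sigma)^n$ with $\|a_m\|_\infty\le\rho^{-m}$, so $\sum_m a_m(t-t_0)^m$ converges for $|t-t_0|<\rho$ and its sum must be $X(t)$; hence $X$ is analytic. Carrying out the same construction with an arbitrary base point in place of $(f_1,\dots,f_n)$ gives the statement for all admissible initial data.

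The routine part is the contraction argument; the main obstacle is the analyticity upgrade. A bare uniform limit of real-analytic functions need not be real-analytic, so one genuinely needs the uniform lower bound $\rho$ on the radius of convergence of the Picard iterates --- hence the majorant estimate --- together with the coefficientwise-limit reasoning of Proposition~\ref{Arzela-2}. A secondary nuisance is the mismatch between the H\"older norm in which the analyticity of $F$ is phrased and the sup norm in which the contraction and the Arzela--Ascoli compactness live; this is reconciled by observing that each Picard step enlarges the H\"older seminorm only by $O(\epsilon)$ and that the H\"older seminorm is lower semicontinuous under uniform convergence, so everything may be carried out in $C^0(\Sigma)^n$ with $Hol_{C,\alpha}$ sitting inside as a closed convex set.
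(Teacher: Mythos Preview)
Your proposal is correct and follows essentially the same contraction-mapping route as the paper: both reformulate the ODE as the fixed-point equation for the Picard operator $X\mapsto (f_1,\dots,f_n)+\int_{t_0}^{t}F(s,X(s))\,ds$ and invoke the Banach fixed point theorem on a small interval. The paper is very brief on the analyticity of the solution, essentially asserting it follows from the analyticity of $F$ together with the precompactness of analytic curves furnished by Proposition~\ref{Arzela-2}; you supply the missing details by running the Picard iteration inside the analytic class, using a majorant argument to secure a uniform lower bound on the radii of convergence, and then passing to the limit coefficientwise exactly as in Proposition~\ref{Arzela-2}. This is the natural way to make the paper's sketch rigorous, so the two arguments should be regarded as the same.
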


\begin{proof}

The proof mimics the usual proof of Picard's theorem applying the idea of contraction operators. The operator
$$ L(g )(t) = (f_{1}, f_{2}, ..,f_{n}) + \int_{t_{0}}^{t} F(s,g(s)) ds $$
is defined in the set of continuous curves $ g : [x,y] \longrightarrow (\operatorname{Hol}_{C,\alpha}(\Sigma))^{n}$ that are analytic on $(x,y)$.
According to Lemma \ref{Arzela-2}, this set of curves endowed with the sup norm is co-compact. Now, as in the proof of the usual version of
Picard's theorem, there exists a small interval $( t_{0}- \epsilon, t_{0}+\epsilon) $, where $\epsilon >0$ depends on the sup norm of the first derivatives of the function $F$,
where the above operator restricted to curves defined in $( t_{0}- \epsilon, t_{0}+\epsilon) $ is a contraction. Therefore, by Lemma \ref{Arzela-2}, there
exists a unique fixed point $X(t)$ that must be the solution of the equation claimed in the statement. The solution is analytic since the function $F$ is analytic.
\end{proof}

\subsection{Geodesic accessibility of the set of potentials associated to Fourier-like equilibrium measures on symbolic spaces with two symbols}

The purpose of the subsection is to show:

\begin{theorem} \label{complete}
Let $A \in \mathcal{N}$ be the potential associated with a equilibrium probability of the shift of two symbols, and let $B \in \mathcal{N}$. Then, there exists a geodesic of $\mathcal{N}$ endowed with the variance Riemannian metric joining the two points.
\end{theorem}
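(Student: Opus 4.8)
The plan is to upgrade the local existence of geodesics (Lemma~\ref{geod-exist} and Theorem~\ref{geodesic-potentials-1}) to a global statement by a continuation/connectedness argument in the spirit of the Hopf--Rinow theorem, exploiting the fact that local geodesics through $A$ exist with a uniform lower bound $\rho>0$ on their interval of definition and that the norms of $X(t),X'(t)$ are uniformly bounded by $D$ on $(-\rho,\rho)$. First I would fix the point $A$ associated to a Gibbs probability on the shift of two symbols, so that by Lemma~\ref{maxent-basis} (and Corollary~\ref{basis-local-bound}, Lemma~\ref{bounded-coeff}) the Fourier-like condition holds at $A$ and indeed at every $Q$ in a neighborhood $B_r(A)$; hence by Lemma~\ref{geod-exist} every unit tangent vector at any such $Q$ integrates to a geodesic defined on a fixed interval of length $2\rho$, with uniformly bounded speed-derivative.

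Next I would define the set $\mathcal{A}\subset\mathcal{N}$ of points $B$ that are \emph{geodesically accessible} from $A$, i.e.\ such that there is a geodesic $\gamma:[0,1]\to\mathcal{N}$ with $\gamma(0)=A$, $\gamma(1)=B$. The first step is to show $\mathcal{A}$ contains a neighborhood of $A$: by Lemma~\ref{geod-exist} and the analytic dependence on initial conditions, the exponential map $\exp_A(v)=\gamma_v(1)$ is well defined and analytic on a ball of radius $\rho$ in $T_A\mathcal{N}$, and its derivative at $0$ is the identity (the Gauss-type computation: $\frac{d}{dt}\gamma_{tv}(1)=\gamma'_{tv}(t)\to v$), so by the inverse function theorem in Banach spaces $\exp_A$ is a local analytic diffeomorphism onto a neighborhood of $A$. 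The same argument applies at every point $Q\in\mathcal{N}$ \emph{provided} $Q$ is itself Fourier-like; this is where I would need to know that the Fourier-like property is not just local around a two-symbol Gibbs measure but propagates — and here I would invoke that for $M=\{0,1\}^\mathbb{N}$ the explicit Fourier-like basis of Subsection~\ref{marbas1} exists at \emph{every} normalized potential, so $\exp_Q$ is a local diffeomorphism for all $Q\in\mathcal{N}$.

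Then I would run the standard Hopf--Rinow machinery adapted to this setting: pick $B\in\mathcal{N}$ arbitrary and a path (not necessarily geodesic) from $A$ to $B$, which exists since $\mathcal{N}$ is connected (it is an analytic manifold modeled on a connected Banach space, parameterized by Jacobians forming a convex set). Along a compact path one covers it by finitely many geodesic balls $B_\rho(Q_j)$ of the uniform radius, and by concatenating short geodesic segments and then using the local diffeomorphism property of each $\exp_{Q_j}$ to "straighten" — more carefully, I would minimize energy over the finite-dimensional submanifolds $S_m$ (Lemma~\ref{basis}), which are complete enough for the direct method, obtain minimizing geodesics $\gamma_m$ in $S_m$ from $A$ to the $\Pi$-projection $B_m$ of $B$, and pass to the limit $m\to\infty$ using Proposition~\ref{Arzela-2} exactly as in the proof of Lemma~\ref{geod-exist}; the uniform coefficient bounds of Proposition~\ref{system-bound} guarantee the limit curve solves $\nabla_{X}X=0$ and joins $A$ to $B$.

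The main obstacle, and the delicate point I would flag explicitly, is \textbf{completeness of the length functional} — i.e.\ controlling that the concatenation/limiting geodesic actually \emph{reaches} $B$ rather than escaping $\mathcal{N}$ in finite length, since $\mathcal{N}$ is not known to be metrically complete. What makes it work here is precisely the uniformity in Lemma~\ref{geod-exist}: the interval of definition $\rho$ and the speed bound $D$ do not shrink to zero as the basepoint varies over the $\Pi$-projections of the $B_r(Q_j)$, because the Fourier-like constants $\alpha,\beta$ (hence $\rho$, hence $D$) are uniform for two-symbol Gibbs measures by Lemma~\ref{maxent-basis}. So a geodesic starting at $A$ with unit speed can be prolonged step by step on intervals of fixed length $\rho$ as long as it stays in the region where the uniform Fourier-like bounds hold, and since for $M=\{0,1\}^\mathbb{N}$ they hold everywhere, the geodesic extends indefinitely; standard Hopf--Rinow then yields that any $B$ is joined to $A$ by a (length-minimizing) geodesic. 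I would present the argument by first proving geodesic completeness (every geodesic from $A$ extends to all of $\mathbb{R}$), then deducing accessibility, taking care that the passage to the limit $m\to\infty$ in the minimization over $S_m$ is justified by Proposition~\ref{Arzela-2} and that the limit vector field is genuinely in the kernel of the Ruelle operator at each point (so the limit curve stays in $\mathcal{N}$), which follows since each $\gamma_m'(t)\in T\mathcal{N}$ and $T\mathcal{N}$ is closed in the relevant topology.
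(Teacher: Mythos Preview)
Your Hopf--Rinow strategy has a genuine gap: in infinite-dimensional Riemannian manifolds, geodesic completeness does \emph{not} imply that any two points are joined by a geodesic (this is Atkin's classical counterexample), so even if you succeed in extending every geodesic from $A$ to all of $\mathbb{R}$, you cannot conclude that $B$ is reached. Moreover, the step on which the extension argument rests---that the Fourier-like constants $\alpha,\beta$, and hence the local existence radius $\rho$ from Lemma~\ref{geod-exist}, are uniform over all of $\mathcal{N}$---is not established in the paper and is unlikely to hold: the construction in Subsection~\ref{aqui1} produces constants depending on $\sup J/\inf J$ and on the Bowen constants $K_1,K_2$ of the particular Jacobian, which degenerate as $J$ approaches the boundary of the space of Jacobians. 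So both the completeness input and the Hopf--Rinow implication are missing.

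The paper sidesteps both obstacles by a direct variational argument that never invokes completeness. It fixes one explicit analytic curve joining $A$ to $B$, namely $\bar{c}(t)=\Pi((1-t)A+tB)$, and observes that it sits in a family $\Gamma_{C,\alpha,\nu}$ of analytic curves with uniformly bounded H\"older data and uniformly bounded radii of convergence; Proposition~\ref{Arzela-2} makes this family precompact in the sup norm. A length-minimizing sequence in $\Gamma_{C,\alpha,\nu}$ then has a subsequential limit $\gamma_\infty$. To see $\gamma_\infty$ is a geodesic, the paper uses local uniqueness only near $A$: since $\gamma_\infty$ locally minimizes, it must coincide on $[0,\rho)$ with the unique local geodesic of Lemma~\ref{geod-exist}; then the function $t\mapsto\langle\nabla_{\gamma_\infty'}\gamma_\infty',Y\rangle$ is analytic in $t$ and vanishes on $[0,\rho)$, hence on all of $[0,1]$. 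This analytic-continuation trick replaces the uniformity of $\rho$ you were seeking, and the precompact family replaces the Hopf--Rinow step.
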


The idea of the proof is inspired by the Palais-Smale condition: we shall construct a sequence of analytic curves joining two points whose lengths converge to the distance $d(A,B)$ in the Riemannian metric. Then, we show that the sequence has a convergent subsequence, in the set of analytic curves joining the two points, to a curve $\gamma :[0,1] \longrightarrow \mathcal{N}$, and by the general theory of geodesics this curve is a critical point of the length and thus a solution of the equation $\nabla_{\gamma'(t)}\gamma'(t) =0$.

We start by considering the curve $c(t) = A(1-t) + tB$, $c: [0,1] \longrightarrow \text{Hol}$. The potentials $c(t)$
might be not be normalized of course, even though they have nice regular properties.

\begin{enumerate}
\item The functions $c(t)$ are H\"{o}lder with constants bounded above by the maximum of the H\"{o}lder constants of $A$ and $B$, say $Q$; and H\"{o}lder exponents bounded below by some $\rho >0$ depending on $A,B$.
\item The projection $\bar{c}(t) = \Pi(c(t))$ is an analytic curve of the variables $A, B, t$, because the eigenfunctions
$h_{c(t)}$ and the eigenvalues $\lambda_{c(t)}$ are analytic functions of $c(t)$.
\item The curve of normalized potentials $\bar{c}(t)$ is a curve of H\"{o}lder functions with constants bounded by some $\bar{Q}$ and exponent bounded below by some $\delta$.
\item The radius of convergence of the series expansions in terms of $t$ of the functions $\bar{c}(t)$ around any $t_{0}$ is bounded below by some $D>0$ for every $t_{0} \in [0,1]$. This is because the radius of convergence of the series depends continuously on the parameter $t_{0} \in [0,1]$, so the compactness of $[0,1]$ implies that there is a positive lower bound for the radius of convergence.
\end{enumerate}

Let $\text{Hol}_{C,\alpha}(\Sigma)$ be the set of H\"{o}lder normalized potentials in $\Sigma = \{0, 1\}^{\mathbb{N}}$ with H\"{o}lder constant $C$, whose exponents are bounded below by $\alpha$, and let $\Gamma_{C,\alpha,\nu}$ be the family of curves $\upsilon: [0,1] \longrightarrow \text{Hol}_{C,\alpha}(\Sigma)$ depending analytically on $t \in [0,1]$, such that, the radius of convergence of the series expansion of the curves are bounded below by $\nu >0$. By Proposition \ref{Arzela-2}, we know that the family of functions $\Gamma_{C,\alpha,\nu}$ endowed with the sup norm is pre-compact.
\bigskip

\textbf{Proof of Theorem \ref{complete}}
\bigskip

Given $A, B \in \mathcal{N}$, we showed that the set of curves $\Gamma_{C,\alpha,\nu}$ is nonempty for certain values
of $C,\alpha,\nu$: the curve $\bar{c}(t) = \Pi(c(t))$ is in $\Gamma_{C,\delta,\nu}$. Therefore, either $\bar{c}(t)$ has minimal length in $\Gamma_{C,\alpha,\nu}$, and it is the geodesic we look for, or there exists a curve $c_{1}: [0,1] \longrightarrow \text{Hol}_{C, \alpha}(\Sigma)$ in $\Gamma_{C,\alpha,\nu}$ with strictly smaller length. By induction, either we find a geodesic $c_{n}$ in this process or we find a sequence $c_{k}$ of curves in $\Gamma_{C,\alpha,\nu}$ whose lengths converge to the infimum of the lengths of all curves joining $A, B$. By Proposition \ref{Arzela-2} there exists a
convergent subsequence whose limit is a continuous curve $\gamma_{\infty}: [-r,r] \longrightarrow \text{Hol}_{Q,\delta}$ , that is analytic in $t \in (-r,r)$, whose length attains the minimum of the lengths of curves joining $A,B$.

The curve $\gamma_{\infty}$ minimizes length in the family $\Gamma_{C,\alpha,\nu}$.
\bigskip

\textbf{Claim}: $\gamma_{\infty}$ is a true geodesic.
\bigskip

To show the Claim we apply the local existence results of the previous sections.

We know that there exists an open ball $B_{\rho}(A)$ around $A$ such that every nonzero tangent vector $X \in T_{A}\mathcal{N}$ determines uniquely a geodesic $\gamma_{X}: (-\rho, \rho) \longrightarrow \mathcal{N}$ such that $\gamma_{X}(0)= A$, $\gamma_{X}'(0)= X$. This local geodesic is an analytic curve of H\"{o}lder continuous functions whose H\"{o}lder constants are bounded above by a certain $\hat{C}$ and whose exponents are at least $\hat{\alpha}$. So if we replace $C$ by the maximum of $C, \hat{C}$, and $\alpha$ by the
minimum of $\alpha, \hat{\alpha}$, we get a precompact family of analytic curves $\Gamma_{C',\alpha',\nu}$ that contains the curve $\gamma_{\infty}$. Moreover, since $\gamma_{\infty}$ restricted to the ball $B_{\rho}(A)$ is a local minimizer in the family $\Gamma_{C, \alpha, \rho}$, Picard's Theorem \ref{Picard} and hence, the existence and uniqueness of local geodesics implies that $\gamma_{\infty}$ has to be
one of the solutions of the geodesic equation in the open ball $B_{\rho}(A)$.

This proves the Claim in an interval $[0,\rho)$ of the domain $[0, 1]$ of $\gamma_{\infty}$. If $\rho \geq 1$ then
we have shown that the curve is a geodesic as we wished. Otherwise, let us consider a local coordinate system at $P = \gamma_{\infty}(\rho)$ and let us look at the functions

$$ f_{Y}(t) = \langle \nabla_{\gamma_{\infty}'(t) } \gamma_{\infty}'(t), Y(\gamma_{\infty}(t)) \rangle $$
where $Y$ is an analytic vector field locally defined in the coordinate neighborhood of $P$. Since we know that
$\gamma_{\infty}(t)$ is analytic in $t$, as well as the Riemannian metric and the vector field $Y$, we have that the function $f_{Y}(t)$ is analytic in $t$. Since $\gamma_{\infty}$ is a geodesic in the interval $t \in (0, \rho)$, $f_{Y}(t) = 0$ for every $t \in (0,\rho)$, so we have by continuity that $f_{Y}(\rho) = 0$. The analyticity of $f_{Y}(t)$ then yields that there exists $\delta >0$ such that $f_{Y}(\rho + s) = 0$ for every $ \mid s \mid < \delta$. This shows that the curve $\gamma_{\infty}$ must be a geodesic in the whole interval $[0,1]$ as claimed.

\section{On the surface of Markov probabilities depending on two parameters} \label{aqui2}

We shall devote this section to the problem of the existence of geodesics on the surface of Markov probabilities. In the previous article \cite{LR1}, a detailed study of the Markov surface revealed remarkable geometric properties. Two of them are that the surface is totally geodesic in $\mathcal{N}$, and that its Gaussian curvature is zero everywhere. Let us recall the definition of the Markov surface and some of the main results about the intrinsic geometry of the surface in $\mathcal{N}$ obtained in \cite{LR1}.

Consider $M=\{0,1\}^\mathbb{N}$ and denote by $K$ the set of stationary Markov probabilities taking values in $\{0,1\}$.

Given a finite word $x =(x_1,x_2,...,x_k)\in \{0,1\}^k$, $k \in \mathbb{N}$, we denote by $[x]$ the associated cylinder set of size $k$ in
$\Omega=\{0,1\}^\mathbb{N}$.

Consider a shift invariant
Markov probability $\mu$ obtained from a row stochastic matrix $(P_{i,j})_{i,j=0,1}$ with positive entries and the initial left invariant vector of probability $\pi=(\pi_0,\pi_1)\in \mathbb{R}^2$. We denote by $A$ the H\"older potential associated to such probability $\mu$ (see Example 6 in \cite{LR}). There exists an explicit countable orthonormal basis, indexed by finite words $[x]$, for the set of H\"older functions on the kernel of the Ruelle operator $\op{L}_A$ (see \cite{LR1}).

Given $r \in (0,1)$ and $s\in (0,1)$ we denote
\begin{equation} \label{tororo2} P= \left(
\begin{array}{cc}
P_{0,0} & P_{0,1}\\
P_{1,0} & P_{1,1}
\end{array}\right)= \left(
\begin{array}{cc}
r & 1-r\\
1-s & s
\end{array}\right) .
\end{equation}

In this way $(r,s)\in (0,1) \times (0,1)$ parameterize all {\bf row} stochastic matrices we are interested. The following statement is proved in \cite{LR} and describes a special coordinate system for the surface $K$ of Markov probabilities.

\begin{figure}[!htb]
\centering
\hspace{-10pt} {\includegraphics[scale=0.6]{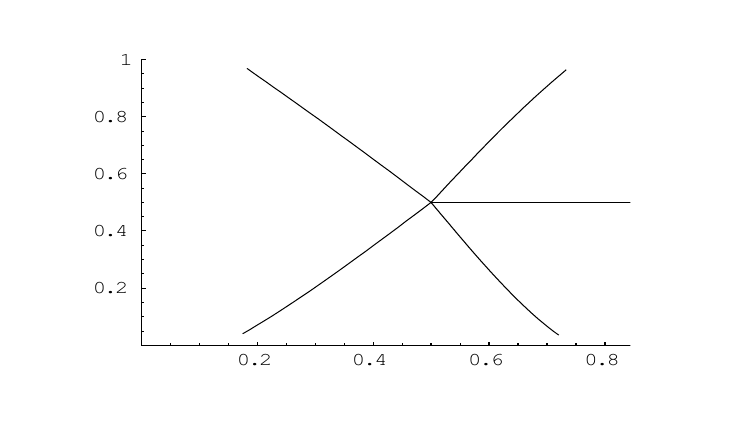}}
\caption{Numerical simulation - geodesics emanating from the point $(1/2,1/2)$ on parameter coordinates $(r,s)\in (0,1)\times (0,1)$ which describes the set of Markov probabilities.}
\label{esta3}
\end{figure}

\begin{theorem} \label{isothermal}
The Markov surface $K$ is totally geodesic in $\mathcal{N}$. Moreover, there exists a pair of unit vector fields $X_{1}$, $X_{2}$ tangent to $K$ which are orthogonal everywhere and satisfy the following properties: at a point of the stochastic matrix with coordinates $(r,s)$ we have
\begin{enumerate}
\item $\nabla_{X_{1}}X_{1} = \Gamma_{11}^{1} X_{1} $ where
$$ \Gamma_{11}^{1}= -\frac{(2r-1)(s-1)}{2(-2+r+s)} \frac{1}{(-\frac{(-1+r)r(-1+s)^{3}}{(-2+r+s)^{3}})^{\frac{1}{2}}}. $$
\item $\nabla_{X_{2}}X_{2} = \Gamma_{22}^{2} X_{2} $ where
$$ \Gamma_{22}^{2}= -\frac{(2s-1)(r-1)}{2(-2+r+s)} \frac{1}{(-\frac{(-1+s)s(-1+r)^{3}}{(-2+r+s)^{3}})^{\frac{1}{2}}}. $$
In particular, the vector fields $X_{1}$ and $X_{2}$ are geodesic vector fields, namely, their integral curves are
geodesics of $\mathcal{N}$.
\item $\nabla_{X_{1}}X_{2} = \nabla_{X_{2}}X_{1} = 0$, in particular, the vector fields $X_{1}$, $X_{2}$
commute and define a isothermal coordinate system for the Markov surface $K$.
\end{enumerate}
\end{theorem}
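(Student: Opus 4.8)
The plan is to reduce everything to explicit computations in the chart $(r,s)\in(0,1)^{2}$ of \eqref{tororo2}, using the explicit cylinder-constant orthonormal basis of $\ker\mathcal{L}_{A}$ for Markov potentials established in \cite{LR1} (see also Subsection \ref{marbas1}). First I would write the normalized H\"older potential $A=A_{(r,s)}$ of the Markov probability attached to $P$ (see Example 6 in \cite{LR}): it is constant on cylinders of length two and is an explicit function of $r$, $s$ and of the stationary vector $\pi_{0}=\frac{1-s}{2-r-s}$, $\pi_{1}=\frac{1-r}{2-r-s}$. Differentiating in $r$ and $s$ gives the coordinate vector fields $\partial_{r}$, $\partial_{s}$, which lie in $\ker\mathcal{L}_{A}$ and are combinations of the finitely many "short" basis elements (words of length at most two). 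Using $g(U,V)=\int UV\,d\mu_{(r,s)}$ I would then compute the first fundamental form $g(\partial_{r},\partial_{r})$, $g(\partial_{r},\partial_{s})$, $g(\partial_{s},\partial_{s})$ as explicit rational functions of $(r,s)$, the relevant combinations being those that appear under the square roots in the statement; normalizing and orthogonalizing these coordinate fields produces the orthonormal frame $X_{1}$, $X_{2}$ tangent to $K$.

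The core of the argument is to evaluate the ambient covariant derivatives $\nabla^{\mathcal{N}}_{X_{i}}X_{j}$ through the dual formula for the Levi-Civita connection recalled at the end of Section \ref{Prel}. Since $[\partial_{r},\partial_{s}]=0$, the component of $\nabla^{\mathcal{N}}_{X_{i}}X_{j}$ tangent to $K$ involves only first derivatives of the coefficients of the first fundamental form in the chart, and this finite computation is what produces the coefficients $\Gamma_{11}^{1}$, $\Gamma_{22}^{2}$ displayed in the statement together with $\nabla_{X_{1}}X_{2}=\nabla_{X_{2}}X_{1}=0$. The remaining, genuinely delicate point is to show that $\nabla^{\mathcal{N}}_{X_{i}}X_{j}$ has no component transverse to $K$, i.e.\ that the second fundamental form of $K$ in $\mathcal{N}$ vanishes, which is precisely the statement that $K$ is totally geodesic. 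For this I would pair $\nabla^{\mathcal{N}}_{X_{i}}X_{j}$ with an arbitrary "long" element $Z$ (a word of length at least three) of the kernel basis and check $g(\nabla^{\mathcal{N}}_{X_{i}}X_{j},Z)=0$: in the dual formula the term $Z g(X_{i},X_{j})$ vanishes because $g(X_{i},X_{j})$ varies only along $K$; the Lie-bracket terms vanish because $[X_{1},X_{2}]=0$ and brackets of short with long kernel elements keep orthogonality to the short ones; and the remaining terms reduce, via Lemma \ref{Leibniz}, to triple integrals $\int X_{i}X_{j}Z\,d\mu$ of a cylinder-constant function of the first two coordinates against a long element, which vanish by the grading of the explicit basis.

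Once all $\nabla^{\mathcal{N}}_{X_{i}}X_{j}$ are seen to be tangent to $K$, the conclusions follow at once: $K$ is totally geodesic; $\nabla_{X_{1}}X_{1}=\Gamma_{11}^{1}X_{1}$ and $\nabla_{X_{2}}X_{2}=\Gamma_{22}^{2}X_{2}$ show that the integral curves of $X_{1}$, $X_{2}$ are, after reparameterization, geodesics of $\mathcal{N}$; and $\nabla_{X_{1}}X_{2}=\nabla_{X_{2}}X_{1}=0$ forces $[X_{1},X_{2}]=0$, so the frame integrates to a coordinate system on $K$ in which, the frame being orthonormal, the metric is the identity — an isothermal (hence flat) chart, which also re-derives the vanishing of the Gaussian curvature of $K$. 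The main obstacle, as indicated, is the transversality step of the middle paragraph: the explicit evaluation of the first fundamental form and its derivatives is mechanical, whereas verifying uniformly over all of $K$ the orthogonality relations between the low-order kernel elements spanning $TK$ and the high-order ones (and their brackets) is exactly where the special structure of Markov potentials — locally constant Jacobians, the explicit Fourier-like kernel basis of \cite{LR1} — is indispensable; one would not expect so clean a statement for a generic infinite-dimensional submanifold.
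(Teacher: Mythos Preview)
Your proposal is substantially more ambitious than what the paper actually does here. The paper's own proof is essentially a one-line citation: items (1)--(3) and the totally geodesic claim are taken wholesale from \cite{LR1} (the text writes \cite{LR}), and the only content added in the present paper is the remark that a vector field satisfying $\nabla_{X}X=fX$ has geodesic integral curves after reparameterization (citing \cite{Manfredo}, Chapter 8, Lemma 3.1). You do include this last observation in your final paragraph, so on that point the two agree.

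What you outline is, in effect, a reconstruction of the computations carried out in the cited reference: explicit first fundamental form in the $(r,s)$ chart, Koszul formula for the tangential Christoffel symbols, and a direct check that the second fundamental form vanishes by pairing against the ``long'' kernel basis elements. This is the right architecture, and you correctly single out the transversality step as the delicate one. However, a couple of your justifications in that step are too quick to count as a proof. The claim that $Zg(X_{i},X_{j})=0$ ``because $g(X_{i},X_{j})$ varies only along $K$'' presupposes a particular extension of $X_{i},X_{j}$ off $K$ and is not automatic; likewise, ``brackets of short with long kernel elements keep orthogonality to the short ones'' is asserted rather than argued. The clean way through is to extend all fields as coordinate fields of a single chart built from the Fourier-like basis (as in Lemma \ref{basis}), so that all Lie brackets vanish identically; then the Koszul formula reduces to three derivative terms, each of which, via Lemma \ref{Leibniz}, produces integrals of products of cylinder-constant functions against a long Haar element, and the orthogonality of the Haar basis across different word-lengths finishes it. Your ``grading'' intuition is exactly this, but it should be stated as a consequence of the explicit Haar orthogonality relations of \cite{LR1} rather than invoked as a principle.
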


\begin{proof}
The Theorem is essentially proved in \cite{LR}. The only thing that deserves to be explained is the fact that the
vector fields $X_{1}$ and $X_{2}$ are geodesic. This is a well known result in the theory of geodesics: if a smooth vector field
$X$ satisfies $\nabla_{X} X= fX$, for a smooth scalar function $f$, then the integral orbits of $X$ are geodesics (see for instance \cite{Manfredo} 1979 Edition, Chapter 8, Lemma 3.1).
\end{proof}

\begin{figure}[!htb]
\centering
\hspace{-10pt} {\includegraphics[scale=0.6]{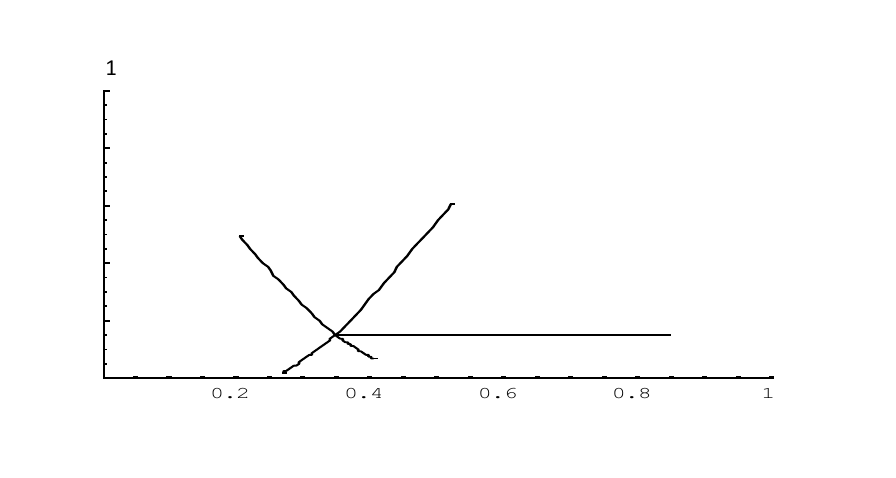}}
\caption{Numerical simulation - geodesics emanating from the point $(0.35,0.15)$ on parameter coordinates $(r,s)\in (0,1)\times (0,1)$ which describes the set of Markov probabilities.}
\label{esta4}
\end{figure}

The existence of an isothermal coordinate system is quite exceptional, and simplifies a great deal the system of
differential equations of geodesics in the surface. Moreover, it is easy to show that a surface with an isothermal coordinate system whose integral curves are geodesics is flat. Notice that the coefficients of the covariant derivatives in Theorem \ref{isothermal} are just the Christoffel coefficients of the coordinate system. In particular, item (1) implies that
$\Gamma_{11}^{2}=0$, item (2) that $\Gamma_{22}^{1} =0$, and item (3) that $\Gamma_{12}^{1}= \Gamma_{21}^{1}= \Gamma_{12}^{2}= \Gamma_{21}^{2} = 0$. The system of differential equations of geodesics in this coordinate system is then given by (see \cite{Manfredo} for instance )
$$ \frac{d^{2}u_{1}}{dt^{2}}(t) = \Gamma_{11}^{1}(u_{1}(t),u_{2}(t))(\frac{du_{1}}{dt}(t)) ^{2} $$
$$ \frac{d^{2}u_{2}}{dt^{2}}(t) = \Gamma_{22}^{2}(u_{1}(t),u_{2}(t))(\frac{du_{2}}{dt}(t)) ^{2} $$
where $\gamma(t) = (u_{1}(t), u_{2}(t))$ is the expression of a geodesic $\gamma(t)$ in the corresponding coordinates.
Note that the geodesics are not straight lines (one exception is the horizontal line through $r=1/2$). In figures 1 and 2, using Mathematica, we were able to show  parts of several geodesic paths with the initial position taken at the points, respectively, $(1/2,1/2)$ and $(0.35,0.15)$ .

\section{KL-divergence and dynamical information projections} \label{DIP}

Let us start with the second part of the article, focused on information projections in a dynamical context. First, we shall remind some preliminaries about KL-divergence and information theory.

\subsection{Introduction} \label{dop}

Through this section, the set
$M=\Omega =\{1,2,...,d\}^\mathbb{N}$, will be the compact symbolic space equipped with the usual metric $d$.

Recall that a Jacobian $J:\Omega \to(0,1)$ is a positive H\"older function such that
$\op{L}_{\log J}(1)=1,$ where $\op{L}_{\log J}$ is the the Ruelle operator for $\log J.$ The potential
$$x=(x_1,x_2,...,x_n,...) \to \log J(x_1,x_2,...,x_n,...)$$
is normalized.
Remember that for each Jacobian $J$ is associated a probability $\mu=\mu_J$, such that,
$\op{L}_{\log J}^*(\mu_J)= \mu_J$. The set of all possible $\mu_J$
constitutes the set $\mathcal{N}$.

A particular more simple example: if $\mu$ is a Markov shift invariant probability measure taking values in $\{1,2\}$, associated to a row stochastic matrix $P=(P_{i,j})_{i,j=1,2}$,
we get that the corresponding Jacobian $J$ satisfies $J(x) = P_{j,i}$, for each $x$ in the cylinder $\overline{i,j}\subset \{1,2\}^\mathbb{N}$ (see Example 6 in \cite{LR}). In this particular example the Jacobian $J$ depends only on the first two coordinates $x_1,x_2$ of $x=(x_1,x_2,x_3,...,x_n,...) \in \Omega=\{1,2\}^\mathbb{N}.$

The relation $ J \Longleftrightarrow \mu_J\in \mathcal{N}$ is bijective. It is natural to parameterize the H\"older equilibrium probabilities $\mu_J\in \mathcal{N}$ by the associated Jacobian $J$. We will not distinguish between naming $J$ and $\mu_J$.

The probabilities on $\mathcal{N}$ are ergodic for the action of the shift in the symbolic space $\Omega$ and hence they
are  singular with to respect to each other.  This property results, in some cases, in a certain difference when comparing our results and proofs with the non-dynamical  ones (as described in \cite{Ama}, \cite{Ay}, \cite{Nielsen1} and \cite{PolWu}).

Remember that we denote by $\text{Hol}$ the set of H\"older functions $f: \Omega \to \mathbb{R}$.

Here we are interested in the Kullback-Leibler divergence (KL-divergence for short) of shift invariant probabilities (see (17) in \cite{LM2})

Given two Jacobians $J_0$ and $J_1$ and the associated equilibrium probabilities $\mu_0$ and $\mu_1$ in $\mathcal{N}$,
its Kullback-Leibler divergence (or relative entropy) is given by
\begin{equation} \label{a1}
D_{KL}(\mu_0\,|\, \mu_1) =\int (\log J_0 - \log J_1)\,\, d \mu_0\geq 0.\,\,\,
\end{equation}

The above value is zero if and only if $\mu_0=\mu_1$ (which is the same as saying that $J_0=J_1$).
In some way, the relative entropy behaves like a kind of metric in the space of probabilities but the triangle inequality is not
always true (see \cite{Nielsen}). Moreover, the KL-divergence is not symmetric. $D_{KL}$ is convex in both variables.

\smallskip

Using the Riemannian structure of \cite{GKLM} and also \cite{Ji} it was shown in Section 5 in \cite{LR} that the Fisher information is equal to the asymptotic variance (see \cite{PP} for the definition). A result taken from \cite{LR}:

\begin{proposition} \label{pppr} Assume that $\xi$ is a tangent vector to $\mathcal{N}$ at $\mu_2$, then, for $\mu_1 \in \mathcal{N}$
\begin{equation} \label{china88} D_{K L} (\mu_1,\mu_ 2 + d \xi) = - \int \xi d \mu_1 + \frac{1}{2} \int \xi^2 d \mu_2 +
o (|d \xi|^2) ,
\end{equation}
where $ \int \xi^2 d \mu_2$ is the Fisher information.
\end{proposition}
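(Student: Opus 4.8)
The plan is to expand $D_{KL}(\mu_1,\mu_2+d\xi)$ to second order in $d\xi$ using the definition \eqref{a1} together with the analytic dependence of the Jacobian (equivalently, the normalized potential) on the base point, as guaranteed by Lemma \ref{derivative-bounds0}. First I would set $A = \log J_2$, so that $\mu_2 = \mu_A$, and note that a tangent vector $\xi$ at $\mu_2$ lies in the kernel of $\mathcal{L}_A$, hence $\int \xi\, d\mu_2 = 0$ by item (5) of Lemma \ref{derivative-bounds0}. The curve $\lambda \mapsto \mu_{A + \lambda \xi}$ (or more precisely its normalization $\mu_{\Pi(A+\lambda\xi)}$, which by item (5) has derivative $\xi$ at $\lambda = 0$) has a Jacobian $J(\lambda)$ depending analytically on $\lambda$, with $J(0) = J_2$, $\frac{d}{d\lambda}\log J(\lambda)|_{\lambda=0} = \xi$. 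Writing $d\xi$ for the first-order displacement, I want to identify $\log J_{\mu_2 + d\xi} = A + d\xi + o(|d\xi|)$ up to the needed order; the second-order term in $\log J$ can be absorbed into the error after integration against $\mu_1$ because of the defining normalization property of Jacobians.

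Next I would substitute into \eqref{a1} with $\mu_0 = \mu_1$ and the second argument $\mu_2 + d\xi$:
\begin{equation*}
D_{KL}(\mu_1, \mu_2 + d\xi) = \int \big(\log J_1 - \log J_{\mu_2+d\xi}\big)\, d\mu_1 = \int \log J_1\, d\mu_1 - \int \log J_{\mu_2+d\xi}\, d\mu_1.
\end{equation*}
The key input is the Taylor expansion of $\lambda \mapsto \int \log J(\lambda)\, d\mu_1$ to second order. Its first derivative at $\lambda = 0$ is $\int \xi\, d\mu_1$. For the second derivative, I would use that $\int \log J(\lambda)\, d\mu_1$ differs from $\int \log J(\lambda)\, d\mu_{\lambda}$ — the entropy/pressure functional, whose derivatives are controlled by Lemma \ref{derivative-bounds0}, items (2)–(3) — by a term that vanishes to first order, and then exploit that $\int \log J(\lambda)\, d\mu_\lambda = -h(\mu_\lambda)$ together with the fact that $\mu_2$ is an equilibrium (critical) point. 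A cleaner route: use $D_{KL}(\mu_1,\mu_2) = \int(\log J_1 - A)\, d\mu_1$ as the zeroth-order term, so that the expansion reduces to $D_{KL}(\mu_1,\mu_2+d\xi) - D_{KL}(\mu_1,\mu_2) = -\int d\xi\, d\mu_1 - \tfrac12 \int (\text{2nd order term of }\log J)\, d\mu_1 + o(|d\xi|^2)$, and then one must show the remaining integral equals $\int \xi^2\, d\mu_2$. Here is where I would invoke the identification of the Fisher information with the asymptotic variance from Section 5 of \cite{LR}: the Hessian of $\lambda \mapsto D_{KL}(\mu_1, \mu_\lambda)$ at an interior equilibrium point is independent of $\mu_1$ to leading order and equals the Riemannian metric $g_{\mu_2}(\xi,\xi) = \int \xi^2\, d\mu_2$, which is by definition the asymptotic variance of $\xi$ with respect to $\mu_2$.

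The main obstacle I expect is the careful bookkeeping of the second-order term of $\log J_{\mu_2 + d\xi}$ in terms of $\xi$: the map from tangent vectors to Jacobians is only the first-order part of the analytic chart, and one must verify that the second-order correction to $\log J$, after integrating against the fixed probability $\mu_1$, contributes exactly $\tfrac12 \int \xi^2\, d\mu_2$ and not something depending on $\mu_1$. This is precisely the content that makes the Fisher information well-defined, and it follows from the combination of Lemma \ref{derivative-bounds0} (giving $D^2_B \log \Lambda(\eta,\psi) = \int \eta\psi\, d\mu_B$) and the relation between $D_{KL}$, relative entropy and pressure established in \cite{LR}. Once that identity is in hand, collecting the terms $-\int \xi\, d\mu_1 + \tfrac12\int \xi^2\, d\mu_2 + o(|d\xi|^2)$ completes the proof.
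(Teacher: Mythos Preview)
The paper does not actually prove Proposition~\ref{pppr}: it is introduced as ``A result taken from \cite{LR}'' and is simply quoted, so there is no in-paper argument to compare against.

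Your outline is essentially correct but you make the second-order step harder than it is. The clean route---which is exactly the computation the paper itself carries out in the related equations \eqref{rewo12}--\eqref{china11}---is to use that $\mu_1$ is shift-invariant. Writing $A=\log J_2$ and $\log J_\lambda=\Pi(A+\lambda\xi)=A+\lambda\xi+\log\varphi_\lambda-\log\varphi_\lambda\circ\sigma-\log\alpha_\lambda$, the coboundary $\log\varphi_\lambda-\log\varphi_\lambda\circ\sigma$ integrates to zero against \emph{any} invariant probability, in particular against $\mu_1$. Hence
\[
\int \log J_\lambda\,d\mu_1=\int A\,d\mu_1+\lambda\int\xi\,d\mu_1-\log\alpha_\lambda,
\]
and the only nonlinear contribution in $\lambda$ is the pressure term $-\log\alpha_\lambda$, which is independent of $\mu_1$. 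Since $\xi\in T_{\mu_2}\mathcal{N}$ gives $\frac{d}{d\lambda}\log\alpha_\lambda|_{\lambda=0}=\int\xi\,d\mu_2=0$ and, by Lemma~\ref{derivative-bounds0}(3), $\frac{d^2}{d\lambda^2}\log\alpha_\lambda|_{\lambda=0}=\int\xi^2\,d\mu_2$, the expansion follows immediately. Your worry that the second-order correction to $\log J$ might, after integration against $\mu_1$, depend on $\mu_1$ is thus resolved by the coboundary observation alone; you do not need to invoke the Fisher-information identification from \cite{LR} as an external input---that identification is precisely what this computation establishes. (Note also that your expansion, like this one, produces a zeroth-order term $D_{KL}(\mu_1,\mu_2)$ which is absent from \eqref{china88} as displayed; this appears to be an omission in the quoted formula rather than a defect of your argument.)
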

\smallskip

Given Jacobians $J_0,\tilde{J}_1 \in \Theta_1$, $J_0 \neq \tilde{J}_1$,
consider the {\bf Jacobian} $\mathfrak{J}_\lambda$, $\lambda\in [0,1]$, such that,
\begin{equation} \label{a55} \mathfrak{J}_\lambda = J_0 + \lambda (\tilde{J}_1 - J_0).
\end{equation}

We denote by $\mu_{\mathfrak{J}_\lambda}$ the equilibrium probability associated to $\mathfrak{J}_\lambda$.  In this case $\mathfrak{J}_1= \tilde{J}_1.$

The probability $\mu_{\mathfrak{J}_\lambda}$ corresponds in \cite{Nielsen} to the concept of mixture distribution. In \cite{FLL} Bayesian Hypothesis Tests are considered for the family described by \eqref{a55}.

Note that in our dynamical setting the problem of considering a convex combination of probabilities is different from the problem of considering convex combinations of Jacobians like in \eqref{a55}.  As a non trivial convex combination of ergodic probabilities is not ergodic, it is more natural - under the ergodic point of view - to consider the family of probabilities $\mu_{\mathfrak{J}_\lambda}$ as described above in expression \eqref{a55}.

It follows from \cite{LR1} that the function $\frac{\tilde{J}_1-J_0}{J_0}$ corresponds to a tangent vector to the analytic manifold $\mathcal{N}$ at the point $\mu_{0}$.

The inequality
\begin{equation} \label{a61}\frac{d }{d \lambda} D_{KL}(\mu_{\mathfrak{J}_\lambda}, \mu_1)|_{\lambda=0} >0
\end{equation}
implies that the relative entropy of $\mu_{\mathfrak{J}_\lambda}$ with respect to $\mu_1$ is infinitesimally increasing on $\mu_0$
in the direction of $\tilde{J}_1-J_0.$ This can be consider a manifestation of the Second Law of Thermodynamics (see \cite{Cat} and Section 5 in \cite{LR}). Issues related to the derivative \eqref{a61} will be analyzed under the domain of what we will call
the Second Problem.
\smallskip

Given $\mu_1$ with Jacobian $J_1$, when $\mu_2=\mu_{\tilde{J}_1 }, \mu_0$ are such that
\begin{equation} \label{a612}\frac{d }{d \lambda} D_{KL}(\mu_{\mathfrak{J}_\lambda}, \mu_1)|_{\lambda=0} <0
\end{equation}
we say that this triple is under the fluctuation regime. The use of this terminology is in accordance with section 3.4 in \cite{Cat}. In the case \eqref{a61} is true we say that the triple
$\mu_0,\mu_1,\mu_2=\mu_{\tilde{J}_1 }$ is under the Second Law regime (a terminology borrowed from gas dynamics).

From a Bayesian point of view, the probability $\mu_1$ describes {\it the prior} probability and $\mu_{\mathfrak{J}_\lambda}$ plays the role of {\it the posterior} probability
in the inductive inference problem described by expression $D_{KL}(\mu_{\mathfrak{J}_\lambda}, \mu_1)$ (see Section 2.10 in
\cite{Cat}, \cite{LM2} and \cite{ELLM}).
The function $\log \mathfrak{J}_\lambda - \log J_1$ should be considered as the likelihood function (see \cite{FLL}).

\smallskip

Given $\mu_0=\mu_{J_0},\mu_1=\mu_{J_1},\mu_2=\mu_{\tilde{J}_1}$, the inequality
\begin{equation} \label{a2} D_{KL}(\mu_2, \mu_1) \geq D_{KL}(\mu_2, \mu_0) + D_{KL}(\mu_0, \mu_1)
\end{equation}
is called the {\bf Pythagorean inequality} (see Theorem 11.6.1 in \cite{Cover}).

Given $\mu_1$ with Jacobian $J_1$, expression \eqref{a2} is equivalent to
\begin{equation} \label{a225}\int (\log \tilde{J}_1 - \log J_1)\,\, d \mu_2 \geq \int (\log \tilde{J}_1 - \log J_0)\,\, d \mu_2 + \int (\log J_0 - \log J_1)\,\, d \mu_0.
\end{equation}

Interesting questions related to the Pythagorean inequality and information projections appear in Game Theory, Statistical Mechanics, Information Theory and Geometry (see \cite{Ga} \cite{EH}, \cite{Top}, \cite{KS}, \cite{Geo} and Section 12 in \cite{PolWu}).

A source of inspiration for our work is the following theorem presented in \cite{Cover}: given the probabilities $P=(P_1,P_2,...,P_d)$ and
$Q=(Q_1,Q_2,...,Q_d)$ on the set $\{1,2,...,d\}$, denote the KL divergence of $P$ and $Q$ by
$$ D(P\|Q) = \sum_{k=1}^d P_k \log P_k - \sum_{k=1}^d P_k \log Q_k.$$

Consider the probabilities $P^j=(P^j_1,P^j_2,...,P^j _d)$,
$j=0,1,2$, on $\{1,2,...,d\}$, and denote $P_\lambda= P^0 + \lambda (P^2 - P^0)$, $\lambda\in [0,1]$. Theorem 11.6.1 in \cite{Cover} claims that if
$$\frac{d}{d \lambda} D(P_\lambda, P_1)|_{\lambda =0} = \frac{d}{d \lambda}[\, \sum_{k=1}^d P_\lambda^k \log P_\lambda^k - \sum_{k=1}^d P_\lambda^k \log P^1_k ]|_{\lambda =0} >0 ,$$
then is true the {\it Pythagorean} inequality
\begin{equation} \label{cov225} D(P^2\|P^1) \geq D(P^2\|P^0) + D(P^0\|P^1) .
\end{equation}

In the case $\frac{d}{d \lambda} D(P_\lambda, P_1)|_{\lambda =0} \leq 0$ then the {\it triangle} inequality $ D(P^2\|P^1) \leq D(P^2\|P^0) + D(P^0\|P^1)$ is true.

Probabilities on $\{1,2,...,d\}$ have no dynamical content. Analogous results to the ones obtained in a non dynamical setting, when considered with respect to the dynamical setting of ergodic probabilities on $\{1,2,...,d\}^\mathbb{N} $, are not always true. The above probabilities $P^j$, $j=0,1,2$ are all absolutely continuous with respect to each other. The probabilities $\mu_j$,
$j=0,1,2$, described above are all singular with respect to each other. This makes a big difference when we want to demonstrate in our setting some analogous result which is known for the case of probabilities on $\{1,2,...,d\}$. Expression \eqref{cov225} for the probabilities $P^j=(P^j_1,P^j_2,...,P^j _d)$ on $\{1,2,...,d\}$,
$j=0,1,2$, corresponds in the dynamical setting to independent Bernoulli probabilities on $\Omega=\{1,2,...,d\}^\mathbb{N}.$ Example \ref{MaMa} in Section \ref{fi} shows that for a slightly more complex case, that it corresponds to consider Markov probabilities on $\{1,2,...,d\}^\mathbb{N}$, the analogous result to Theorem 11.6.1
in \cite{Cover} is not true.

Given $\mu_0=\mu_{J_0},\mu_1=\mu_{J_1},\mu_2=\mu_{\tilde{J}_1}$, the alternative inequality to \eqref{a2} is
\begin{equation} \label{a3} D_{K L} (\mu_2,\mu_1) \leq D_{K L} (\mu_2,\mu_0) + D_{K L} (\mu_0,\mu_1),
\end{equation}
which is known as the {\bf triangle inequality}.

A natural question to ask is when these inequalities appear when considering some extremality property regarding a fixed convex set of probabilities (like expression \eqref{a4} in the First problem to be defined next).

\medskip

Consider $\Theta_1\subset \text{Hol}$ a convex compact set of H\"older Jacobians $J:\Omega \to (0,1)$. Note that given the Jacobians $J_0,J_1$, the convex combination
\begin{equation} \label{a334}\lambda J_0 + (1- \lambda) J_1
\end{equation}
is also a Jacobian. From the bijective relation $ J \Longleftrightarrow \mu_J\in \mathcal{N}$ one can see
$\Theta_1$ as a subset of $\mathcal{N}.$

Note that given the Jacobians $J_0,J_1$, the convex combination $ \lambda \log (J_0) + (1- \lambda) \log (J_1)$ {\bf is not} of the form
$\log J$, for a Jacobian $J$.

\smallskip

{\bf First problem}: given the fixed H\"older Jacobian $J_1\notin \Theta_1$ (associated to a probability $\mu_1=\mu_{J_1}$) assume that the Jacobian $J_0\in \Theta_1$ minimize Kullback-Leibler divergence, that is, $\mu_{J_0}=\mu_0$ satisfies
\begin{equation} \label{a4} D_{KL}(\mu_1, \mu_0)= \min_{\tilde{J}\in \Theta_1} D_{KL}( \mu_1, \mu_{\tilde{J}}).
\end{equation}

A $\tilde{J}=J_0$ minimizing \eqref{a4} will be called a solution of the {\bf minimizing first problem} (Chapter 3 in \cite{Iha} also investigate similar problems).
We analyze here the information projection problem for equilibrium probabilities.

One can also analyze the related maximizing problem
\begin{equation} \label{c44} \max_{\tilde{J}\in \Theta_1} D_{KL}( \mu_1, \mu_{\tilde{J}}).
\end{equation}
with similar methods.
\smallskip

A $\tilde{J}=J_0$ maximizing \eqref{c44} will be called a solution of the {\bf maximizing first problem}.

Given $J_0,\tilde{J}_1 \in \Theta_1$, $J_0 \neq \tilde{J}_1$, we
denote by $\mathfrak{J}_\lambda\in \Theta_1$, $\lambda\in [0,1]$, the function
\begin{equation} \label{a51} \mathfrak{J}_\lambda = \lambda \tilde{J}_1 + (1- \lambda) J_0,
\end{equation}
where we denote by $\mu_{\mathfrak{J}_\lambda}$ the equilibrium probability associated to the Jacobian $\mathfrak{J}_\lambda.$

When $J_0\in \Theta_1$  minimize the Kullback-Leibler divergence in problem \eqref{a4}, and $\mathfrak{J}_\lambda$ satisfies \eqref{a51}, we get for any $\tilde{J}_1\in \Theta_1$
\begin{equation} \label{a62}\frac{d }{d \lambda} D_{KL}(\mu_1, \mu_{\mathfrak{J}_\lambda})|_{\lambda=0} =\frac{d}{d \lambda}[\,\int \log J_1 d \mu_1 - \int \log \mathfrak{J}_\lambda d
\mu_1\,]\,|_{\lambda=0}\geq 0.
\end{equation}

When $J_0\in \Theta_1$ maximize \eqref{c44} we get  for any $\tilde{J}_1\in \Theta_1$
\begin{equation} \label{a611}\frac{d }{d \lambda} D_{KL}(\mu_1, \mu_{\mathfrak{J}_\lambda})|_{\lambda=0} =\frac{d}{d \lambda}[\,\int \log J_1 d \mu_1 - \int \log \mathfrak{J}_\lambda d
\mu_1\,]\,|_{\lambda=0}\leq 0.
\end{equation}

A more useful information is the exact estimate of the value in \eqref{a62} and \eqref{a611} given by \eqref{a49} (to be obtained in section \ref{tre}).

\begin{proposition} \label{er2377}
\begin{equation} \label{a49} \frac{d}{d \lambda}|_{\lambda=0}D_{KL}(\mu_1, \mu_{\mathfrak{J}_\lambda})=\int (1- \frac{\tilde{J}_1}{ J_0}) d \mu_1.
\end{equation}
\end{proposition}

The above value can be positive or negative in different cases. Note that taking $\tilde{J}_1$ and $J_0$ more and more close to each other will imply that the derivative at $\lambda=0$ is closer and closer to zero.

\begin{remark} \label{kak} Assume $\Theta_1$ is a convex simplex generated by $\mathcal{J}_r, r=1,2,...,w$, in the maximization problem \eqref{c44}.
We can ask how to characterize an optimal $J_0$.
As $D_{K L}$ is convex in both variables, the Jacobian $J_0$ (one of the possibles $\mathcal{J}_r, r=1,2,...,w$ ) should be in the boundary of $\Theta_1$

In this way it is natural to consider
\begin{equation} \label{a512} \mathfrak{J}_\lambda^r = \lambda J_0 + (1- \lambda) \mathcal{J}_r\,\, r=1,2,...,w,
\end{equation}
and the associated $\mu_{\mathfrak{J}_\lambda}^r$.

From \eqref{a49} and \eqref{c44} we get that in the second maximization problem
the Jacobian $J_0$ should satisfy the equations
\begin{equation} \label{a149} \int (1- \frac{ J_0}{\mathcal{J}_r}) d \mu_1\geq 0,\,\,\, r=1,2,...,w.
\end{equation}
In this way we have just a finite number of
inequalities to check.
\end{remark}

\medskip

Consider a fixed probability $\mu_1$ with  Jacobian $\mu_1$.
It is also natural to analyze the first type of problem on $\Theta_2$. 
In this way one should a consider the probability $\mu^\lambda$, $\lambda \in [0,1]$, that is the  equilibrium probability  for the potential
\begin{equation} \label{a1010} \lambda \log (\tilde{J}_1) + (1- \lambda) \log (J_0),
\end{equation}
$\lambda\in [0,1]$. We denote {\bf  by $\mathfrak{J}^\lambda$ the Jacobian of the equilibrium probability $\mu^\lambda$} for the potential $\lambda \log (\tilde{J}_1) + (1- \lambda) \log (J_0)$ ({\bf $\mathfrak{J}_\lambda$ is different from $\mathfrak{J}^\lambda$}).
The probability $\mu^1$ has Jacobian $\tilde{J}_1= \mathfrak{J}^1$ and the probability $\mu^0$ has Jacobian $J_0=\mathfrak{J}^0$.

In this case the minimization of
$D_{KL}(\mu_1, \mu_{\tilde{J}})$ will be considered over the set of $\tilde{J}\in \Theta_2$.

Note that the Riemannian manifold of H\"older equilibrium probabilities $\mathcal{N}$ is not flat (see \cite{GKLM} and \cite{LR1}).
Adapting the terminology of \cite{Nielsen} for our dynamical setting, it is natural to call $\log \mathfrak{J}^\lambda$
the linear interpolation of $\mu^0$ an $\mu^1$ at $\lambda$ on the logarithm scale.

The pressure problem for potentials of the form \eqref{a1010} is considered in expression (3.27) in \cite{FLL} (where a different notation is used). More precisely, in the notation we consider here set
\begin{equation} \label{equio} P_1( \lambda)= P(\lambda (\log \tilde{J}_1 - \log J_0) + \log J_0),
\end{equation}
where $\lambda\in[0,1]$ and $P(A)$ denotes the pressure of the potential $A$ (see \cite{PP}). In this case $P_1(0)=0=P_1(1)$, and from expression (3.30) in \cite{FLL}, we get
\begin{equation} \label{equio1}P_1^{\,\prime} (0)= \int (\log \tilde{J}_1 - \log J_0)\,d \mu^0.\end{equation}

The function $\lambda \to P_1 (\lambda)$ described by expression \eqref{equio} corresponds here to the integral-based Bregman generator
(see (156) in \cite{Nielsen1})

Taking $E=0$ in expression (3.36) in \cite{FLL} we get (in the present notation) the deviation function (a Legendre transform)
\begin{equation} \label{equio2} P_1^* (\eta) = \sup_{\lambda \in [0,1]} \, \{\lambda\, \eta \,- P_1(\lambda ) \}.
\end{equation}

Following the reasoning of \cite{Nielsen} it is natural to call
\begin{equation} \label{equio3} B_{P_1} = P_1(1) - P_1(0) - (1 - 0) P_1^{\,\prime} (0)=
\int (\log J_0 - \log \tilde{J}_1)\,d \mu^0 > 0
\end{equation}
the {\it Bregman divergence} for $\mu^0$ an $\mu^1$ (which is this case is equal to $D_{KL}(\mu^0\,|\, \mu^1)$).

\medskip

We will show in Section \ref{to} that
\begin{proposition} Given $\mu_1=\mu_{J_1}$
\begin{equation} \label{chili0}\frac{d}{d \lambda}|_{\lambda=0} D_{K L} (\mu_1,\mu^\lambda) = - \int (\log \tilde{J}_1 - \log J_0) d \mu_1 + \int (\log \tilde{J}_1 - \log J_0) \,d \mu^0.
\end{equation}

The inequality $0\leq \frac{d}{d \lambda}|_{\lambda=0} D_{K L} (\mu_1,\mu^\lambda) $ is equivalent to the Pythagorean inequality:

$$ D_{K L} (\mu_1,\mu^0) + D_{K L} (\mu^0,\mu^2) \leq D_{K L} (\mu_1,\mu^2),$$
where $\mu_2=\mu^1 $
\end{proposition}
\medskip

{\bf Second problem}: given the fixed H\"older Jacobian $J_1\notin \Theta_1$ (associated to a probability $\mu_1=\mu_{J_1}$) assume that $\tilde{J}=J_0\in \Theta_1$ minimize Kullback-Leibler divergence, that is, $\mu_{J_0}=\mu_0$ satisfies
\begin{equation} \label{a7} D_{KL}(\mu_0,\mu_1)= \min_{\tilde{J}\in \Theta_1} D_{KL}( \mu_{\tilde{J}}, \mu_1).
\end{equation}


A $J_0$ minimizing \eqref{a7} will be called a solution of the minimizing second problem.

A $J_0$ maximizing
\begin{equation} \label{a77} D_{KL}(\mu_0,\mu_1)= \max_{\tilde{J}\in \Theta_1} D_{KL}( \mu_{\tilde{J}}, \mu_1).
\end{equation}
will be called a solution of the maximizing second problem.

Given $\mu_1=\mu_{J_1}$ with Jacobian $J_1$,  related to the minimizing second problem we have the following inequality: given a Jacobian $\tilde{J}_1 \in \Theta_1$ and
$\mathfrak{J}_\lambda$ as above
\begin{equation} \label{a8}\frac{d}{d \lambda}D_{KL}(\mu_{\mathfrak{J}_\lambda}, \mu_1)|_{\lambda=0} =\frac{d}{d \lambda}[\,\int \log \mathfrak{J}_\lambda d \mu_{\mathfrak{J}_\lambda} - \int \log J_1
\mu_{\mathfrak{J}_\lambda}\,]\,|_{\lambda=0}\geq 0.
\end{equation}

In this case, we are in the Second Law of Thermodynamics regime.

The second problem is harder than the first one. In Section \ref{fi} we estimate the value in \eqref{a8}:
\begin{proposition} \label{era8812}Given $\mu_1=\mu_{J_1}$ with Jacobian $J_1$
\begin{equation} \label{a83469}\frac{d}{d \lambda}D_{KL}(\mu_{\mathfrak{J}_\lambda}, \mu_1)|_{\lambda=0}= \int (\log J_0- \log J_1) \,( \frac{\tilde{J}_1-J_0}{J_0} )d \mu_{0}
\end{equation}
\end{proposition}

Remember that the function $\xi=\frac{\tilde{J}_1-J_0}{J_0}$ corresponds to a tangent vector to the analytic manifold $\mathcal{N}$ at the point $\mu_{0}$.

A natural question is the following: for $J_0,J_1$ fixed, is there a direction $\frac{\tilde{J}_1-J_0}{J_0}$ where the derivative $\frac{d}{d \lambda}D_{KL}(\mu_{\mathfrak{J}_\lambda}, \mu_1)|_{\lambda=0}$ is maximal? This requires explicit expressions for this derivative.


\medskip

Via a counterexample in section \ref{fi} we will show that not always the inequality $\frac{d}{d \lambda}D_{KL}(\mu_{\mathfrak{J}_\lambda}, \mu_1)|_{\lambda=0}\geq 0$ implies the Pythagorean inequality

$$ D_{K L} (\mu_2,\mu_0) + D_{K L} (\mu_0,\mu_1) \leq D_{K L} (\mu_2,\mu_1),
$$
when $\mu_2=\mu_{\tilde{J}_1}$
\medskip

\medskip
Once more it makes sense to analyze the second type of problem on $\tilde{J}\in \Theta_2$, considering the family $\mu^\lambda, 0 \leq \lambda \leq 1$, which is the equilibrium probability for the potential $ \lambda \log (\tilde{J}_1) + (1- \lambda) \log (J_0).$
\medskip

In Section \ref{qua} we show that
\begin{proposition} \label{eraa8812b} Given $\mu_1$ with Jacobian $J_1$
$$\frac{d}{d \lambda}D_{KL}(\mu^\lambda, \mu_1)|_{\lambda=0}= \int (\log J_0- \log J_1) \,(\,\log (\tilde{J}_1) - \log( J_0)\,) \,d \mu_{0}.$$
\end{proposition}
\smallskip

One can also analyze the related problem
\begin{equation} \label{c4} \max_{ \tilde{J}\in \Theta_1} D_{KL}( \mu_{\tilde{J}}, \mu_1).
\end{equation}
with similar methods to the ones used below.

\medskip

When investigating properties related to
\begin{equation} \label{a9}\lambda \to \mathfrak{J}_\lambda = \lambda \tilde{J}_1 + (1- \lambda) J_0,
\end{equation}

\noindent
we say we are considering a
{\bf $J$-case}.

\medskip

On the other hand when investigating properties related to
\begin{equation} \label{a10}\lambda \to \lambda \log (\tilde{J}_1) + (1- \lambda) \log (J_0),
\end{equation}

\noindent
we say we are considering a
{\bf $\log J$-case}.

\medskip

Given the H\"older potential $A$, in this section we denote by $\alpha_A$ and $\varphi_A$, respectively, the main eigenvalue and the main eigenfunction of the Ruelle operator $\op{L}_A.$ Using the notation of \cite{GKLM},  remember that we denote by $\Pi$ the normalization map
\begin{equation} \label{k1}
\Pi(A) = A + \log \varphi_A - (\log \varphi_A \circ \sigma) - \log \alpha_A.
\end{equation}

The equilibrium probability $\mu_A$ for $A$ has Jacobian $e^{\Pi(A)}$. It is know that $\mu_A= \mu_{\Pi(A)}$.
We are interested in the perturbed potential $A+ \xi$ for very small $\xi$. The entropy of $\mu_A$ is equal
to $- \int \Pi(A) d \mu_A$ (see Corollary 5.3 in \cite{GKLM}).

Given the H\"older potential $A$ the function $D \Pi(A) (\xi)$ is the projection of $\xi$ in the kernel of $\op{L}_A.$
Moreover,
$$D \Pi(A) (\xi)= \xi - \int \xi d \mu_A + u - (u \circ \sigma),$$
for some continuous function $u: \Omega \to \mathbb{R}$ (see (5) section 4.2 in \cite{GKLM})
\smallskip

An important property that we will use here is the following: denote $\mathfrak{h}_t$ the entropy of $\mu_{A + t \xi}$. From section 7.3.1 in \cite{GKLM} we get that
\begin{equation} \label{k2}
\frac{d}{d t} \mathfrak{h}_t|_{t=0} = - \int D \Pi (A) (\xi) \ d\mu_A - \int \Pi(A) \, \xi \,d \mu_A = -\int \Pi(A) \xi d \mu_A.
\end{equation}

\smallskip

Below we summarize the most important relationships that will be used next.

\smallskip

{\bf I}. In order to show the type-1 Pythagorean inequality, we have to show that given $\mu_1=\mu_{J_1},\mu_2=\mu_{J_2},\mu_0=\mu_{J_0}$

$$ D_{K L} (\mu_2,\mu_0) + D_{K L} (\mu_0,\mu_1) = \int (\log J_2 - \log J_0) d \mu_2 + \int (\log J_0 - \log J_1) d \mu_0 \leq $$
\begin{equation} \label{pi1} \int (\log J_2 - \log J_1) d \mu_2= D_{K L} (\mu_2,\mu_1).
\end{equation}

When $\mu_2=\mu_{\tilde{J}_1}$, this is equivalent to
$$0 \leq \int (\log J_0 - \log J_1) d \mu_2 - \int (\log J_0 - \log J_1) d \mu_0= $$
\begin{equation} \label{pia1}  \int (\log J_0 - \log J_1) d \mu_{\tilde{J}_1} - \int (\log J_0 - \log J_1) d \mu_0 .
\end{equation}

{\bf II}. In order to show the type-2 Pythagorean inequality, we have to show that

$$ D_{K L} (\mu_1,\mu_0) + D_{K L} (\mu_0,\mu_2) = \int (\log J_1 - \log J_0) d \mu_1 - \int (\log J_0 - \log J_2) d \mu_0 \leq $$
\begin{equation} \label{pi2} \int (\log J_1 - \log J_2) d \mu_1= D_{K L} (\mu_1,\mu_2),
\end{equation}
where $\mu_2$ has Jacobian $J_2$.

When $\mu_2=\mu_{\tilde{J}_1}$, this is equivalent to
\begin{equation} \label{pia2} 0 \leq \int (\log J_0 - \log \tilde{J}_1) d \mu_1 - \int (\log J_0 - \log \tilde{J}_1) d \mu_0.
\end{equation}

{\bf III}. The type-1 triangle inequality
$$ D_{K L} (\mu_2,\mu_0) + D_{K L} (\mu_0,\mu_1) \geq D_{K L} (\mu_2,\mu_1) $$
is equivalent to

$$ \int (\log J_0 - \log J_1) d \mu_0 \geq \int (\log J_0 - \log J_1) d \mu_2. $$

{\bf IV}. The type-2 Pythagorean inequality is
$$ D_{K L} (\mu_1,\mu_0) + D_{K L} (\mu_0,\mu_{\tilde{J}_1 }) = \int (\log J_1 - \log J_0) d \mu_1 - \int (\log J_0 - \log \tilde{J}_1) d \mu_0 \geq $$
\begin{equation} \label{pi4} \int (\log J_1 - \log \tilde{J}_1) d \mu_1= D_{K L} (\mu_1,\mu_{\tilde{J}_1 }).
\end{equation}

The above is equivalent to
\begin{equation} \label{pia4} 0 \leq \int (\log J_0 - \log \tilde{J}_1) d \mu_1 - \int (\log J_0 - \log \tilde{J}_1) d \mu_0.
\end{equation}

\subsection{The $\log J$ case}

\subsubsection{First problem} \label{to}

$\Theta_2$ denotes the convex set of H\"older potentials $\tilde{J}$ that were defined in Subsection \ref{dop}.
Consider an H\"older Jacobian $J_0 $ associated to an H\"older potential $A_0 \in \Theta_2,$ and $\mu_0$ the associated equilibrium probability.
Denote $A_t =\log J_0 +\, t \,\xi$, where $\xi$ is a tangent vector at $\mu_0$ and $t \in \mathbb{R}$. We assume that $\xi$ is such that $A_t$ belongs to $\Theta_2$, for all $t\in[0,1]$. When $\xi= \log \tilde{J}_1 - \log J_0$, the associated H\"older Jacobian is denoted by $\mathfrak{J}^t$ and $\mu^t$ is the associated equilibrium state for $A_t$ (or, for $\log \mathfrak{J}^t$).

\smallskip

{\bf A minimization problem:} suppose $\mu_1$ with Jacobian $J_1$ is fixed and $J_1 \notin \Theta_2$. Suppose that $J_0$ is the Jacobian of a certain special potential $A_0$ in $\Theta_2$.

Here the probability $\mu^t$, $\lambda \in [0,1]$,  is the  equilibrium probability  for the potential
\begin{equation} \label{a10107} t \log (\tilde{J}_1) + (1- t) \log (J_0),
\end{equation}
$t\in [0,1]$. We denote {\bf  by $\mathfrak{J}^t$ the Jacobian of the equilibrium probability $\mu^t$} for the potential $t \log (\tilde{J}_1) + (1- t) \log (J_0)$.

We will assume that $J_0$ satisfies an extremality property described in the following way: given any Jacobian $\tilde{J}_1$, associated to a potential $A_2$ in $\Theta_2$, denote $g:[0,1] \to \mathbb{R}$ by
\begin{equation} \label{a11}t \to g(t)= D_{K L} (\mu_1,\mu^t),
\end{equation}
when $\xi= \log \tilde{J}_1 - \log J_0$. Under our hypothesis $A_t$ belongs to $\Theta_2$, for $t\in[0,1]$.

Note that $g(0) = D_{K L} (\mu_1,\mu_0)$ and, as $\mu^1=\mu_{\tilde{J}_1}$, $g(1) = D_{K L} (\mu_1,\mu^1)$

The extremality property for $J_0$ is that $g(t)$ has a  minimum at $0$. This implies that

\begin{equation} \label{a12} \frac{d}{d t}|_{t=0} D_{K L} (\mu_1,\mu^t) =\frac{d}{d t}|_{t=0} ( \int \log J_1 d \mu_1 - \int \log \mathfrak{J}^t d \mu_1) \geq 0.
\end{equation}

The above means that in some sense we are taking as $\mu_0$ the {\it $D_{KL}$-closest} probability to $\mu_1$ in $\Theta_2$.

There exist $\varphi_t$ and $\lambda_t \in \mathbb{R}$, such that, the Jacobian $\mathfrak{J}^t$ satisfies
\begin{equation} \label{rewo12}\log \mathfrak{J}^t= \log J_0 + t \, \xi + \log \varphi_t - \log \varphi_t (\sigma) - \log \alpha_t,
\end{equation}
where $\log \alpha_t= P(\log J_0 + t \, \xi ).$

It is known (see \cite{PP}) that for a continuous function $\xi :\Omega \to \mathbb{R}$ (not necessarily satisfying $\int \xi d \mu_0=0$)
\begin{equation} \label{rewo} \frac{d}{d t}\log \alpha_t|_{t=0} = \frac{d}{d t} P(\log J_0 + t \,\xi )|_{t=0} =\int \xi d \mu_0.
\end{equation}

From the invariance of $\mu_1$
$$ 0\leq \frac{d}{d t}|_{t=0} D_{K L} (\mu_1,\mu^t) = $$
$$ \frac{d}{d t}|_{t=0} \,[ \int \log J_1 d \mu_1 - \int (\log J_0 + \theta \, \xi + \log \varphi_t- \log \varphi_t (\sigma) - \log \alpha_t ) d \mu_1 ]=$$
$$ \frac{d}{d t}|_{t=0} \,[ \int \log J_1 d \mu_1 - \int (\log J_0 + t \, \xi - \log \alpha_t ) d \mu_1 ]=$$
\begin{equation} \label{china11} - \int \xi d \mu_1 + \int \xi d \mu_0 .
\end{equation}

Therefore, taking $\xi= \log \tilde{J}_1 - \log J_0$
$$ 0\leq \frac{d}{d t}|_{t=0} D_{K L} (\mu_1,\mu^t) = $$
\begin{equation} \label{china0} - \int (\log \tilde{J}_1 - \log J_0) d \mu_1 + \int (\log \tilde{J}_1 - \log J_0) d \mu_0.
\end{equation}

This is equivalent to the type-2 Pythagorean inequality:

$$ D_{K L} (\mu_1,\mu_0) + D_{K L} (\mu_0,\mu^1) = \int (\log J_1 - \log J_0) d \mu_1 + \int (\log J_0 - \log \tilde{J}_1) d \mu_0 \leq $$
$$ \int (\log J_1 - \log \tilde{J}_1) d \mu_1= D_{K L} (\mu_1,\mu^1). $$

\medskip

{\bf A maximization problem for $\tilde{J}\in \Theta_2$:} a similar problem will produce the triangle inequality. Suppose $\mu_1$ with Jacobian $J_1$ is fixed and $\log J_1 \notin \Theta_2$.
Now, we assume that $\tilde{J}=J_0$ satisfies a different extremality property described in the following way: in the same way as before take a Jacobian $\tilde{J}_1$ associated to a potential $A_2$ in $\Theta_2$, and denote $g:[0,1] \to \mathbb{R}$ by
$$ g(t)= D_{K L} (\mu_1,\mu^t), $$
when $\xi= \log \tilde{J}_1 - \log J_0$.

The new extremality property for $J_0$ is that $g(t)$ has  maximum at $0$. This implies that

\begin{equation} \label{lut} \frac{d}{d t}|_{t=0} D_{K L} (\mu_1,\mu^t) =\frac{d}{d t}|_{t=0} ( \int \log J_1 d \mu_1 - \int \log \mathfrak{J}^t d \mu_1) \leq 0.
\end{equation}

The above means that in some sense we are taking as $\mu_0$ the {\it more $D_{KL}$-distant} probability to $\mu_1$ in $\Theta_2$.

Using \eqref{china11} one can show the triangle inequality
$$ D_{K L} (\mu_1,\mu_0) + D_{K L} (\mu_0,\mu_2) \geq D_{K L} (\mu_1,\mu_2),$$
where $\mu_2=\mu^1$.

\medskip

\subsubsection{Second problem} \label{qua}

In this section we consider the family
\begin{equation} \label{a101} \lambda \log (\tilde{J}_1) + (1- \lambda) \log (J_0)= \log J_0 + \lambda (\,\log (\tilde{J}_1) - \log( J_0)\,),
\end{equation}
where $\lambda \in [0,1].$

Note that
$$ \frac{d}{d \lambda} [\,\log (J_0) + \lambda (\,\log (\tilde{J}_1) - \log( J_0)\,)\,] \,=\,\log (\tilde{J}_1) - \log( J_0)\, . $$

Remember that $\mathfrak{J}^\lambda$ is the Jacobian of the equilibrium probability for $ \lambda \log ( \tilde{J}_1) - (1 - \lambda) \log (J_0)$.

We denote by $\mu^\lambda=\mu_{ \mathfrak{J}^\lambda }$ the equilibrium probability for $\log \mathfrak{J}^\lambda$. The Shannon-Kolmogorov entropy of
$\mu^\lambda$ is $-\int \log \mathfrak{J}^\lambda d \mu^\lambda$.

We want to estimate
$$\frac{d}{d \lambda}|_{\lambda=0}D_{KL}(\mu^\lambda, \mu_1).$$

From Theorem 5.1 in \cite{GKLM} we get
\begin{proposition} \label{erte88b} Denote by $\mathfrak{J}^\lambda$ the Jacobian of the equilibrium probability for the potential $ \lambda \log ( \tilde{J}_1) - (1 - \lambda) \log (J_0)$. Then,
\begin{equation} \label{b1b}
\frac{d}{d \lambda}\int \log J_1 \, d \mu_{ \mathfrak{J}^\lambda}|_{\lambda =0}=\int \, \log J_1\, (\,\log (\tilde{J}_1) - \log( J_0)\,) \,\, d \mu_{0}.
\end{equation}
\end{proposition}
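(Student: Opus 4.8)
The plan is to obtain \eqref{b1b} as a direct application of the first-variation formula for equilibrium states from the calculus of thermodynamic formalism, namely Theorem~5.1 of \cite{GKLM}. Put $\psi := \log J_2 - \log J_0$ and consider the analytic one-parameter family of H\"older potentials $A_\lambda := \log J_0 + \lambda\,\psi = \lambda \log J_2 + (1-\lambda)\log J_0$. By definition $\mathfrak{J}^\lambda = e^{\Pi(A_\lambda)}$, and since equilibrium probabilities of cohomologous potentials coincide we have $\mu_{\log\mathfrak{J}^\lambda} = \mu_{\Pi(A_\lambda)} = \mu_{A_\lambda} = \mu^\lambda$; in particular $\mu^0 = \mu_0 = \mu_{J_0}$, because $A_0 = \log J_0$ is already normalized. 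Thus the object to be differentiated is the scalar function $\lambda \mapsto \int \log J_1\, d\mu_{A_\lambda}$, the integral of the \emph{fixed} observable $\log J_1$ against the moving family of equilibrium states, and by Lemma~\ref{derivative-bounds0}(1) the map $\lambda \mapsto \mu_{A_\lambda}$ is analytic, so this function is smooth and the variational formula applies.

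First I would make explicit what Theorem~5.1 of \cite{GKLM} says in the present situation: for an analytic curve of potentials based at a \emph{normalized} potential $A_0$, with velocity $\dot A_0 = \tfrac{d}{d\lambda}A_\lambda|_{\lambda=0}$, and for any H\"older observable $f$, the derivative $\tfrac{d}{d\lambda}\big|_{\lambda=0}\int f\, d\mu_{A_\lambda}$ equals $\int f\,\dot A_0\, d\mu_{A_0}$. The crucial input is that $A_0 = \log J_0$ normalized forces the leading eigenvalue to be $1$ and the leading eigenfunction to be $1$, so that at $A_0$ one has $D_{A_0}\log\Lambda(\cdot) = \int \cdot\, d\mu_0$, $D_{A_0}H = 0$, and $D_{A_0}\Pi$ equal to the identity on the kernel of $\mathcal{L}_{A_0}$ (Lemma~\ref{derivative-bounds0}(2),(4),(5)); these are exactly the identities that make the eigenvalue and eigenfunction correction terms in the general first-variation formula vanish, leaving only the bare integral $\int f\,\dot A_0\, d\mu_{A_0}$.

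Applying this with $f = \log J_1$ and $\dot A_0 = \psi = \log J_2 - \log J_0$ gives
\[
\frac{d}{d\lambda}\int \log J_1\, d\mu_{\log\mathfrak{J}^\lambda}\Big|_{\lambda=0} = \int \log J_1\,(\log J_2 - \log J_0)\, d\mu_0,
\]
which is \eqref{b1b}. As an independent check one can route the computation through the pressure: $\int \log J_1\, d\mu_{A_\lambda} = \partial_s P(A_\lambda + s\log J_1)\big|_{s=0}$ by the derivative-of-pressure formula (see \cite{PP}; compare \eqref{rewo}); interchanging $\partial_\lambda$ and $\partial_s$ (legitimate by the analyticity of Lemma~\ref{derivative-bounds0}(1)) and using that formula once more turns the left-hand side of \eqref{b1b} into the mixed second derivative of the pressure at $\log J_0$ in the directions $\log J_1$ and $\log J_2 - \log J_0$, which by \cite{GKLM} is again $\int \log J_1\,(\log J_2 - \log J_0)\, d\mu_0$.

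The step I expect to be the main obstacle is the one in the second paragraph: carefully verifying that the general first-variation formula of \cite[Theorem~5.1]{GKLM} really does collapse, at the normalized base point $\log J_0$, to the clean expression $\int f\,\dot A_0\, d\mu_0$ — i.e.\ tracking the eigenfunction and conformal-measure contributions and checking, through the normalization identities recalled in Lemma~\ref{derivative-bounds0}, that they cancel. The remaining steps (analyticity, the identification $\mu_{\log\mathfrak{J}^\lambda} = \mu_{A_\lambda}$, the interchange of partial derivatives, and $\dot A_0 = \log J_2 - \log J_0$) are routine.
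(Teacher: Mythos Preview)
Your proposal is correct and takes essentially the same approach as the paper, which simply cites Theorem~5.1 of \cite{GKLM} without further elaboration. Your additional verification via the mixed second derivative of the pressure is a nice consistency check but not needed for the argument.
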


From \eqref{k2} we get
\begin{proposition} \label{erte881b} Denote by $\mathfrak{J}^\lambda$ the Jacobian of the equilibrium probability for $ \lambda \log ( \tilde{J}_1) - (1 - \lambda) \log (J_0)$. Then, the derivative of minus the entropy of $\mu_{ \mathfrak{J}^\lambda}$ is
\begin{equation} \label{bo1b} \frac{d}{d \lambda}\int \log \mathfrak{J}_\lambda \, d \mu_{ \mathfrak{J}^\lambda}|_{\lambda =0}= \int \log J_0 (\,\log (\tilde{J}_1) - \log( J_0)\,) \,d \mu_{0}.
\end{equation}
\end{proposition}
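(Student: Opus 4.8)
The plan is to recognise the quantity $\int \log\mathfrak{J}^\lambda\, d\mu^\lambda$ as minus the Shannon--Kolmogorov entropy of $\mu^\lambda$ and then to differentiate it using the entropy--variation formula \eqref{k2}. First I would set $A=\log J_0$ and $\xi=\log J_2-\log J_0$. Since $J_0$ is a Jacobian, $A$ is a normalized potential, so $\Pi(A)=A=\log J_0$ and $\mu_A=\mu_0$. For the one-parameter family $A+\lambda\xi=\lambda\log J_2+(1-\lambda)\log J_0$ of \eqref{a101}, the normalization map \eqref{k1} gives $\log\mathfrak{J}^\lambda=\Pi(A+\lambda\xi)$, so that the equilibrium probability $\mu^\lambda=\mu_{A+\lambda\xi}=\mu_{\log\mathfrak{J}^\lambda}$ has entropy
\[
\mathfrak{h}_\lambda=-\int \Pi(A+\lambda\xi)\,d\mu^\lambda=-\int \log\mathfrak{J}^\lambda\,d\mu^\lambda
\]
by Corollary 5.3 in \cite{GKLM}. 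Hence $\int \log\mathfrak{J}^\lambda\,d\mu^\lambda=-\mathfrak{h}_\lambda$, and the task reduces to computing $\frac{d}{d\lambda}\mathfrak{h}_\lambda|_{\lambda=0}$.

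Next I would apply \eqref{k2} with the normalized potential $A$ and the perturbation $\xi$. Using that $\int D\Pi(A)(\xi)\,d\mu_A=0$ — which holds because $D\Pi(A)(\xi)=\xi-\int\xi\,d\mu_A+u-u\circ\sigma$ for some continuous $u$ and $\int(u-u\circ\sigma)\,d\mu_A=0$ by $\sigma$-invariance of $\mu_A$ — formula \eqref{k2} yields
\[
\frac{d}{d\lambda}\mathfrak{h}_\lambda|_{\lambda=0}=-\int \Pi(A)\,\xi\,d\mu_A=-\int \log J_0\,(\log J_2-\log J_0)\,d\mu_0,
\]
and therefore
\[
\frac{d}{d\lambda}|_{\lambda=0}\int \log\mathfrak{J}^\lambda\,d\mu^\lambda=-\frac{d}{d\lambda}\mathfrak{h}_\lambda|_{\lambda=0}=\int \log J_0\,(\log J_2-\log J_0)\,d\mu_0,
\]
which is exactly \eqref{bo1b}.

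The step I expect to be the main obstacle is the justification that $\lambda\mapsto\Pi(A+\lambda\xi)$ and $\lambda\mapsto\mu^\lambda$ are differentiable in the relevant norms near $\lambda=0$, so that \eqref{k2} is applicable and differentiation under the integral sign in $\int\log\mathfrak{J}^\lambda\,d\mu^\lambda$ is legitimate; I would deduce this from the analyticity of $B\mapsto\lambda_B$, $B\mapsto h_B$ and $A\mapsto\mu_A$ in Lemma \ref{derivative-bounds0}(1) together with the local bounds of Proposition \ref{dpi}. As a self-contained alternative avoiding the entropy identity, I would differentiate directly: set $Y(\lambda)=\log\mathfrak{J}^\lambda$, regard it as a vector field along the curve $\lambda\mapsto\mu^\lambda$, and invoke the Leibniz-type rule of Lemma \ref{Leibniz} to get $\frac{d}{d\lambda}|_{\lambda=0}\int Y(\lambda)\,d\mu^\lambda=\int \tfrac{dY}{d\lambda}(0)\,d\mu_0+\int Y(0)\,X(0)\,d\mu_0$, with $X(0)=D\Pi(A)(\xi)=\xi-\int\xi\,d\mu_0+u-u\circ\sigma$; then, expanding $\tfrac{dY}{d\lambda}(0)=\frac{d}{d\lambda}\Pi(A+\lambda\xi)|_0$ and cancelling the coboundary and additive-constant contributions against the invariant measure $\mu_0$, one arrives again at $\int\log J_0\,(\log J_2-\log J_0)\,d\mu_0$. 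Both routes close the argument.
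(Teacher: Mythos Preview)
Your proof is correct and follows exactly the approach of the paper: the proposition is stated there with the bare justification ``From \eqref{k2} we get'', and you have simply spelled out that application by taking $A=\log J_0$ (normalized, so $\Pi(A)=\log J_0$) and $\xi=\log J_2-\log J_0$. The additional justification of differentiability via Lemma~\ref{derivative-bounds0}(1) and the alternative route through Lemma~\ref{Leibniz} are not needed but do no harm.
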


From \eqref{bo1b} it follows
\begin{proposition} \label{erte8812b}
\begin{equation} \label{a834}\frac{d}{d \lambda}D_{KL}(\mu^\lambda, \mu_1)|_{\lambda=0}= \int (\log J_0- \log J_1) \,(\,\log (\tilde{J}_1) - \log( J_0)\,) \,d \mu_{0}.
\end{equation}
\end{proposition}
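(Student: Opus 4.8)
The plan is to reduce the claim to the two computations already recorded in Propositions \ref{erte88b} and \ref{erte881b}. Since $\mu^\lambda=\mu_{\log\mathfrak{J}^\lambda}$ is the equilibrium probability whose normalized potential is $\log\mathfrak{J}^\lambda$, the definition \eqref{a1} of the Kullback--Leibler divergence gives
\begin{equation*}
D_{KL}(\mu^\lambda,\mu_1)=\int \log\mathfrak{J}^\lambda\, d\mu^\lambda-\int \log J_1\, d\mu^\lambda ,
\end{equation*}
where the first integral is minus the Shannon--Kolmogorov entropy of $\mu^\lambda$ and the second is the integral of the \emph{fixed} function $\log J_1$ against the varying measure $\mu^\lambda$. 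Because $\lambda\mapsto \log J_0+\lambda(\log J_2-\log J_0)$ is affine while $\Pi$ and $A\mapsto\mu_A$ are analytic (Lemma \ref{derivative-bounds0}(1)), the curves $\lambda\mapsto \log\mathfrak{J}^\lambda$ and $\lambda\mapsto\mu^\lambda$ are analytic, so I may differentiate the two terms separately at $\lambda=0$ and add them.

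For the entropy term I would quote Proposition \ref{erte881b} (equation \eqref{bo1b}); using $\mu^0=\mu_0$ this reads
\begin{equation*}
\frac{d}{d\lambda}\Big|_{\lambda=0}\int \log\mathfrak{J}^\lambda\, d\mu^\lambda=\int \log J_0\,(\log J_2-\log J_0)\, d\mu_0 .
\end{equation*}
For the cross term I would quote Proposition \ref{erte88b} (equation \eqref{b1b}),
\begin{equation*}
\frac{d}{d\lambda}\Big|_{\lambda=0}\int \log J_1\, d\mu^\lambda=\int \log J_1\,(\log J_2-\log J_0)\, d\mu_0 .
\end{equation*}
Subtracting the second identity from the first and factoring $\log J_2-\log J_0$ out of the integrand gives precisely the asserted formula \eqref{a834}.

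The real analytic content therefore sits entirely inside Propositions \ref{erte88b} and \ref{erte881b}, whose proofs rest on Theorem 5.1 of \cite{GKLM} and on the entropy variation formula \eqref{k2}; beyond assembling those two derivatives there is nothing new to do. The only points that need care are of a bookkeeping nature: one must keep track that we are throughout in the $\log J$-case, i.e.\ the interpolating family is the convex combination of potentials $\log J_0+\lambda(\log J_2-\log J_0)$ and \emph{not} the convex combination of Jacobians $\mathfrak{J}_\lambda$ in \eqref{a55} --- the two produce genuinely different derivatives (contrast Proposition \ref{er2377}) --- and that term-by-term differentiation under the integral sign is legitimate, which follows from the locally uniform Hölder bounds on $\log\mathfrak{J}^\lambda$ and the weak continuity of $\lambda\mapsto\mu^\lambda$ furnished by the spectral perturbation theory of the Ruelle operator. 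I expect no genuine obstacle.
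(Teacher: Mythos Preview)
Your proof is correct and is essentially the same as the paper's: the paper simply says ``From \eqref{bo1b} it follows'' after recording Propositions \ref{erte88b} and \ref{erte881b}, and the derivation is precisely the subtraction of \eqref{b1b} from \eqref{bo1b} that you carry out. Your write-up is in fact more complete, since the paper's one-line justification only cites \eqref{bo1b} explicitly even though \eqref{b1b} is equally needed.
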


In the case $\frac{d}{d \lambda}D_{KL}(\mu^\lambda, \mu_1)|_{\lambda=0}>0$ we get that
\begin{equation} \label{aa834} \int (\log J_0- \log J_1) \,(\,\log (\tilde{J}_1) - \log( J_0)\,) \,d \mu_{0}\geq 0.
\end{equation}



\subsection{The $J$ case}

\subsubsection{Second problem} \label{fi}

In this section we consider the family of Jacobians
\begin{equation} \label{a99} \mathfrak{J}_\lambda = \lambda \tilde{J}_1 + (1- \lambda) J_0,
\end{equation}
$\lambda\in [0,1].$ The probability $\mu_{\mathfrak{J}_\lambda}$ is the one with Jacobian $\mathfrak{J}_\lambda$. We use the notation $\mathfrak{J}_\lambda$ to distinguish from $\mathfrak{J}^\lambda$, which was used in last subsection.

\smallskip

We will show that
$$ \frac{d}{d \lambda}D_{KL}(\mu_{\mathfrak{J}_\lambda}, \mu_1)|_{\lambda=0}= \int (\log J_0- \log J_1) \,( \frac{\tilde{J}_1-J_0}{J_0} )d \mu_{0}.$$
\smallskip

Note that for any $x\in \Omega$
\begin{equation} \label{triu}
\op{L}_{\log J_0} ( 1 - \frac{\tilde{J}_1}{J_0} )(x) = 1- \sum_{a} J_0(a x) \frac{\tilde{J}_1(a x)}{J_0(a x)}= 1 - \sum_{a} \tilde{J}_1(a x)=1-1=0
\end{equation}

Then, the function $1 - \frac{\tilde{J}_1}{J_0}$ is in the kernel of the operator $\op{L}_{\log J_0},$
and
\begin{equation} \label{tritu7}
\int (1 - \frac{\tilde{J}_1}{J_0} ) d \mu_{0}=0.
\end{equation}

Therefore, the function $1 - \frac{\tilde{J}_1}{J_0}$ is a tangent vector to the manifold $\mathcal{N}$ at the point $\mu_{0}$ (see \cite{GKLM}).

Note that
$$\frac{d}{d \lambda} \log \mathfrak{J}_\lambda = \frac{\tilde{J}_1 - J_0 } {\mathfrak{J}_\lambda}.$$

The type-1 Pythagorean inequality
$$ D_{K L} (\mu_2,\mu_0) - ( D_{K L} (\mu_2,\mu_0) + D_{K L} (\mu_0,\mu_1))\geq 0$$
is equivalent to
\begin{equation} \label{Pyt7} \int (\log J_1 - \log J_0) d \mu_0 + \int (\log J_0 - \log J_1) d \mu_2 \geq 0.
\end{equation}

The type-2 Pythagorean inequality is equivalent to
\begin{equation} \label{pia4} 0 \leq \int (\log J_0 - \log \tilde{J}_1) d \mu_1 - \int (\log J_0 - \log \tilde{J}_1) d \mu_0.
\end{equation}

From Theorem 5.1 in \cite{GKLM} we get
\begin{proposition} \label{erte88} Denote by $\mathfrak{J}_\lambda$ the Jacobian $ \mathfrak{J}_\lambda =\lambda \tilde{J}_1 - (1 - \lambda) J_0$. Then,
\begin{equation} \label{b1}
\frac{d}{d \lambda}\int \log J_1 \, d \mu_{ \mathfrak{J}_\lambda}|_{\lambda =0}=\int \, \log J_1\, \frac{( \tilde{J}_1 - J_0)}{J_0} \,\, d \mu_{0}
\end{equation}
and
\begin{equation} \label{bb1}
\frac{d}{d \lambda}\int \log J_1 \, d \mu_{ \mathfrak{J}_\lambda}|_{\lambda =1}=\int \, \log J_1\, \frac{( \tilde{J}_1 - J_0)}{J_0} \,\, d \mu_{\tilde{J}_1}
\end{equation}
\end{proposition}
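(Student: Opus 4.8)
The plan is to deduce both identities from Theorem~5.1 of \cite{GKLM}, which computes the derivative, along an analytic curve of normalized potentials, of the integral of a fixed H\"older observable against the corresponding equilibrium probabilities. The first task is to check that $\lambda \mapsto \log \mathfrak{J}_\lambda$ is indeed such a curve inside $\mathcal{N}$. For every $\lambda \in [0,1]$ one has
$$\mathcal{L}_{\log \mathfrak{J}_\lambda}(1)(x) = \sum_{a} \mathfrak{J}_\lambda(ax) = \lambda \sum_a J_2(ax) + (1-\lambda)\sum_a J_0(ax) = \lambda + (1-\lambda) = 1,$$
so $\log \mathfrak{J}_\lambda$ is already normalized; hence $\mu_{\log \mathfrak{J}_\lambda}$ is its equilibrium probability, $\mu_{\log \mathfrak{J}_0} = \mu_0$, $\mu_{\log \mathfrak{J}_1} = \mu_{\log J_2}$, and the curve depends analytically on $\lambda$ because the eigenvalue $\alpha_{\log \mathfrak{J}_\lambda}$ and the eigenfunction $\varphi_{\log \mathfrak{J}_\lambda}$ do, by Lemma~\ref{derivative-bounds0}. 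Differentiating in $\lambda$, the velocity of the curve is $\frac{d}{d\lambda}\log \mathfrak{J}_\lambda = \frac{J_2 - J_0}{\mathfrak{J}_\lambda}$, which at $\lambda = 0$ equals $\frac{J_2 - J_0}{J_0} = -\bigl(1 - \frac{J_2}{J_0}\bigr)$; by the computation \eqref{triu} this function lies in the kernel of $\mathcal{L}_{\log J_0}$, so it is a genuine tangent vector to $\mathcal{N}$ at $\mu_0$, with vanishing $\mu_0$-integral by \eqref{tritu7}.

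With this in place, I would apply Theorem~5.1 of \cite{GKLM}; note that Lemma~\ref{Leibniz} is not directly available here, since the integrand $\log J_1$ is a fixed H\"older function, not a tangent vector field along the curve. Applied to the observable $\psi = \log J_1$ and to the curve $\lambda \mapsto \log \mathfrak{J}_\lambda$ evaluated at $\lambda = 0$, where the velocity is $\frac{J_2 - J_0}{J_0}$, the theorem returns
$$\frac{d}{d\lambda}\int \log J_1 \, d\mu_{\log \mathfrak{J}_\lambda}\,|_{\lambda = 0} = \int \log J_1 \, \frac{J_2 - J_0}{J_0} \, d\mu_0,$$
which is \eqref{b1}. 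Running the same argument at the other endpoint $\lambda = 1$, where the base equilibrium probability is $\mu_{\log J_2} = \mu_2$, gives the companion identity \eqref{bb1}.

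The step I expect to be the most delicate is exactly the invocation of Theorem~5.1 of \cite{GKLM}: one must verify that its hypotheses hold uniformly along the whole segment $\lambda \in [0,1]$ (in particular that $\mathfrak{J}_\lambda$ remains H\"older with controlled constants and bounded away from $0$, so that the eigendata vary analytically), and that the velocity of the curve is paired with the correct base measure at each endpoint. A more explicit route would be to differentiate the eigenmeasure relation $\mathcal{L}_{\log \mathfrak{J}_\lambda}^{*}\, \mu_{\log \mathfrak{J}_\lambda} = \mu_{\log \mathfrak{J}_\lambda}$ at $\lambda = 0$ and to invert the resulting linear equation for $\frac{d}{d\lambda}\mu_{\log \mathfrak{J}_\lambda}|_{\lambda=0}$ using the spectral gap of the Ruelle operator, with \eqref{triu} identifying the relevant kernel direction, and then to pair the outcome with $\log J_1$.
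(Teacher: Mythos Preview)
Your proposal is correct and follows essentially the same route as the paper: the paper's proof consists of the single line ``From Theorem 5.1 in \cite{GKLM} we get'', and you have simply unpacked the verification that $\lambda\mapsto\log\mathfrak{J}_\lambda$ is an analytic curve of normalized potentials with velocity $(J_2-J_0)/\mathfrak{J}_\lambda$, which is precisely what Theorem~5.1 requires as input. Your added remarks on the hypotheses (H\"older regularity and positivity of $\mathfrak{J}_\lambda$, analyticity of the eigendata via Lemma~\ref{derivative-bounds0}) and the alternative spectral-gap derivation are useful elaborations but do not depart from the paper's argument.
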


From \eqref{k2} we get
\begin{proposition} \label{erte881} Denote by $\mathfrak{J}_\lambda$ the Jacobian $\mathfrak{J}_\lambda = \lambda \tilde{J}_1 - (1 - \lambda) J_0$. Then, the derivative of minus the entropy of $\mu_{\log \mathfrak{J}_\lambda}$
\begin{equation} \label{bo1} \frac{d}{d \lambda}\int \log \mathfrak{J}_\lambda \, d \mu_{\mathfrak{J}_\lambda}|_{\lambda =0}= \int \log J_0 ( \frac{\tilde{J}_1-J_0}{J_0} )d \mu_{0}
\end{equation}
and
\begin{equation} \label{bbo1} \frac{d}{d \lambda}\int \log \mathfrak{J}_\lambda \, d \mu_{\mathfrak{J}_\lambda}|_{\lambda =1}= \int \log \tilde{J}_1 ( \frac{\tilde{J}_1-J_0}{\tilde{J}_1} )d \mu_{\tilde{J}_1}
\end{equation}
\end{proposition}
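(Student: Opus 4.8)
The plan is to recognize $\int\log\mathfrak{J}_\lambda\,d\mu_{\log\mathfrak{J}_\lambda}$ as minus the Shannon--Kolmogorov entropy of $\mu_{\log\mathfrak{J}_\lambda}$ and then to read off its derivative at the two endpoints $\lambda=0$ and $\lambda=1$ from the entropy-variation formula \eqref{k2}, applied at the base potentials $\log J_0$ and $\log J_2$ respectively.

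First I would check that $\mathfrak{J}_\lambda = \lambda J_2 + (1-\lambda)J_0 = J_0 + \lambda(J_2-J_0)$ is again a Jacobian: for each $x$, $\mathcal{L}_{\log\mathfrak{J}_\lambda}(1)(x) = \sum_{T(y)=x}\mathfrak{J}_\lambda(y) = \lambda\sum_{T(y)=x}J_2(y) + (1-\lambda)\sum_{T(y)=x}J_0(y) = \lambda + (1-\lambda) = 1$. Hence $\log\mathfrak{J}_\lambda$ is a normalized potential (so its pressure is zero), and, as recalled just before \eqref{k2}, the entropy of its equilibrium probability $\mu_{\log\mathfrak{J}_\lambda}$ equals $-\int\log\mathfrak{J}_\lambda\,d\mu_{\log\mathfrak{J}_\lambda}$. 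Thus the quantity to be differentiated is minus the entropy, evaluated along the analytic curve $\lambda\mapsto\mu_{\log\mathfrak{J}_\lambda}$ inside $\mathcal{N}$.

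Next I would compute the velocity of this curve. From $\log\mathfrak{J}_\lambda = \log\bigl(J_0 + \lambda(J_2-J_0)\bigr)$ we get $\frac{d}{d\lambda}\log\mathfrak{J}_\lambda = \frac{J_2-J_0}{\mathfrak{J}_\lambda}$, so the tangent at $\lambda=0$ is $\xi_0 = \frac{J_2-J_0}{J_0}$ and at $\lambda=1$ it is $\xi_1 = \frac{J_2-J_0}{J_2}$. By the computation in \eqref{triu}--\eqref{tritu7}, $\xi_0$ lies in $\ker\mathcal{L}_{\log J_0}$ and integrates to zero against $\mu_0$, hence is a tangent vector to $\mathcal{N}$ at $\mu_0$; the same computation with $\log J_2$ in place of $\log J_0$ gives $\mathcal{L}_{\log J_2}\bigl(\tfrac{J_2-J_0}{J_2}\bigr)(x) = \sum_{T(y)=x}J_2(y) - \sum_{T(y)=x}J_0(y) = 0$, so $\xi_1$ is a tangent vector to $\mathcal{N}$ at $\mu_2$. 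Since $A\mapsto\mu_A$ and $A\mapsto h(\mu_A)$ are analytic (Lemma \ref{derivative-bounds0}), and since for a normalized $A$ one has $D_A\Pi(X)=X$ for every $X$ in the tangent space (item (5) of Lemma \ref{derivative-bounds0}), the curve $\lambda\mapsto\mu_{\log\mathfrak{J}_\lambda}$ and the straight-line family $t\mapsto\mu_{\log J_0+t\xi_0}$ have the same velocity at $\mu_0$, and likewise at $\mu_2$. Applying \eqref{k2} with $A=\log J_0$ (so $\Pi(A)=A$) and $\xi=\xi_0$ gives $\frac{d}{d\lambda}h(\mu_{\log\mathfrak{J}_\lambda})|_{\lambda=0} = -\int\log J_0\cdot\frac{J_2-J_0}{J_0}\,d\mu_0$, and flipping the sign yields \eqref{bo1}; running the same argument at $A=\log J_2$ with $\xi=\xi_1$ yields \eqref{bbo1}.

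The main obstacle is the bookkeeping needed to legitimately invoke \eqref{k2}: that formula is stated for the entropy along a straight-line family $A+t\xi$ of potentials, whereas $\lambda\mapsto\log\mathfrak{J}_\lambda$ is a nonlinear curve that nevertheless stays inside $\mathcal{N}$. One must verify that the velocities of the two families agree as tangent vectors (i.e. in the kernel of the relevant Ruelle operator) and that the analyticity of the entropy functional lets one transport the formula; this is exactly why the kernel computations for $\xi_0$ and $\xi_1$ via \eqref{triu}--\eqref{tritu7} are required. Everything else is routine. As a cross-check one can bypass \eqref{k2} altogether by writing $\int\log\mathfrak{J}_\lambda\,d\mu_{\log\mathfrak{J}_\lambda} = \int\log J_0\,d\mu_{\log\mathfrak{J}_\lambda} + \int\log(1+\lambda\xi_0)\,d\mu_{\log\mathfrak{J}_\lambda}$, differentiating the first term with Proposition \ref{erte88} and the second with Lemma \ref{Leibniz}, and using $\int\xi_0\,d\mu_0 = 0$ to kill the leftover term; this reproduces \eqref{bo1}, and symmetrically \eqref{bbo1}.
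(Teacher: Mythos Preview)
Your proposal is correct and follows essentially the same route as the paper, which simply says ``From \eqref{k2} we get'' and records the two formulas. You have merely filled in the details the paper leaves implicit---identifying the tangent vectors $\xi_0=(J_2-J_0)/J_0$ and $\xi_1=(J_2-J_0)/J_2$, checking they lie in the appropriate kernels via \eqref{triu}, and observing that the entropy derivative depends only on the tangent vector---so there is no substantive difference in approach.
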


From \eqref{b1} and \eqref{bo1} it follows at once
\begin{proposition} \label{erte8812}
\begin{equation} \label{a83469}\frac{d}{d \lambda}D_{KL}(\mu_{\mathfrak{J}_\lambda}, \mu_1)|_{\lambda=0}= \int (\log J_0- \log J_1) \,( \frac{\tilde{J}_1-J_0}{J_0} )d \mu_{0}
\end{equation}
and
\begin{equation} \label{aaa1}\frac{d}{d \lambda}D_{KL}(\mu_{\mathfrak{J}_\lambda}, \mu_1)|_{\lambda=1}= \int (\log \tilde{J}_1- \log J_1) \,( \frac{\tilde{J}_1-J_0}{\tilde{J}_1} )d \mu_{\tilde{J}_1}
\end{equation}
\end{proposition}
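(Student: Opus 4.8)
The plan is to differentiate the two natural pieces of $D_{KL}(\mu_\lambda,\mu_1)$ separately and to recognize each derivative as one of the two preceding propositions. Since $\mathfrak{J}_\lambda$ is itself a H\"older Jacobian, the potential $\log\mathfrak{J}_\lambda$ is normalized and $\mu_\lambda=\mu_{\log\mathfrak{J}_\lambda}$ has Jacobian $\mathfrak{J}_\lambda$; hence by the definition \eqref{a1},
\[
  D_{KL}(\mu_\lambda,\mu_1)=\int \log\mathfrak{J}_\lambda\, d\mu_\lambda-\int \log J_1\, d\mu_\lambda ,
\]
where the first term is minus the Shannon--Kolmogorov entropy of $\mu_\lambda$. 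Both $\lambda\mapsto\int\log\mathfrak{J}_\lambda\,d\mu_\lambda$ and $\lambda\mapsto\int\log J_1\,d\mu_\lambda$ are differentiable in $\lambda$: the map $\lambda\mapsto\mathfrak{J}_\lambda$ is affine and stays among the H\"older Jacobians, and the maps $B\mapsto\lambda_B$, $B\mapsto h_B$, $A\mapsto\mu_A$ are analytic by Lemma \ref{derivative-bounds0}, so one may differentiate $D_{KL}(\mu_\lambda,\mu_1)$ term by term.

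For the entropy term I would invoke Proposition \ref{erte881}: equation \eqref{bo1} gives $\frac{d}{d\lambda}\int\log\mathfrak{J}_\lambda\,d\mu_\lambda|_{\lambda=0}=\int\log J_0\,(\frac{J_2-J_0}{J_0})\,d\mu_0$, the inputs being formula \eqref{k2} for the derivative of the entropy together with $\frac{d}{d\lambda}\log\mathfrak{J}_\lambda|_{\lambda=0}=\frac{J_2-J_0}{\mathfrak{J}_0}=\frac{J_2-J_0}{J_0}$ and the fact that $\frac{J_2-J_0}{J_0}=-(1-\frac{J_2}{J_0})$ lies in $\ker\mathcal{L}_{\log J_0}$ with zero $\mu_0$-average, by \eqref{triu}--\eqref{tritu7}. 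For the second term I would invoke Proposition \ref{erte88}: equation \eqref{b1} gives $\frac{d}{d\lambda}\int\log J_1\,d\mu_\lambda|_{\lambda=0}=\int\log J_1\,\frac{J_2-J_0}{J_0}\,d\mu_0$, an instance of the observable-derivative formula of Theorem 5.1 in \cite{GKLM}. Subtracting the second display from the first then yields
\[
  \frac{d}{d\lambda}D_{KL}(\mu_\lambda,\mu_1)\Big|_{\lambda=0}
  =\int\log J_0\,\frac{J_2-J_0}{J_0}\,d\mu_0-\int\log J_1\,\frac{J_2-J_0}{J_0}\,d\mu_0
  =\int(\log J_0-\log J_1)\,\Big(\frac{J_2-J_0}{J_0}\Big)\,d\mu_0 ,
\]
which is the first identity of the proposition.

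The endpoint identity at $\lambda=1$ is obtained the same way, now using the $\lambda=1$ formulas \eqref{bbo1} and \eqref{bb1} of Propositions \ref{erte881} and \ref{erte88}, together with $\mathfrak{J}_1=J_2$, the fact that $\mu_\lambda$ at $\lambda=1$ is $\mu_{\log J_2}$, and $\frac{d}{d\lambda}\log\mathfrak{J}_\lambda|_{\lambda=1}=\frac{J_2-J_0}{J_2}$; subtracting gives $\int(\log J_2-\log J_1)\,(\frac{J_2-J_0}{J_2})\,d\mu_{\log J_2}$. The whole argument is a one-line linear combination of facts already established in this subsection, so there is no genuine obstacle here; the only point deserving a word is the legitimacy of term-by-term differentiation near the endpoints, which is guaranteed by the analytic dependence of all thermodynamic quantities on the potential noted above.
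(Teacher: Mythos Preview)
Your proof is correct and follows exactly the paper's approach: the paper simply states that the result follows at once from \eqref{b1} and \eqref{bo1}, which is precisely the term-by-term differentiation you carry out. Your write-up is just a more detailed rendering of that one-line argument.
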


From convexity we get that $\frac{d}{d \lambda}D_{KL}(\mu_{\mathfrak{J}_\lambda}, \mu_1)|_{\lambda=0}\leq \frac{d}{d \lambda}D_{KL}(\mu_{\mathfrak{J}_\lambda}, \mu_1)|_{\lambda=1}.$

\medskip

In the case $\frac{d}{d \lambda}D_{KL}(\mu_{\mathfrak{J}_\lambda}, \mu_1)|_{\lambda=0}>0$ we get that
\begin{equation} \label{aba83469} \int (\log J_0- \log J_1) \,( \frac{\tilde{J}_1-J_0}{J_0} )d \mu_{0}\geq 0.
\end{equation}

Note that from the inequality $ \frac{1}{x} -1\leq -\log x$ we get from above that
$$ \frac{d}{d \lambda}D_{KL}(\mu_{\mathfrak{J}_\lambda}, \mu_1)|_{\lambda=0}= \int \log J_0\, \frac{\tilde{J}_1-J_0}{J_0} d\mu_0 - \int \log J_1 \,\frac{\tilde{J}_1-J_0}{J_0} d \mu_{0}\leq$$
\begin{equation} \label{a8359}
\int \log J_0\, \frac{\tilde{J}_1-J_0}{J_0} d\mu_0 - \int \log J_1 \,\log \frac{J_0}{\tilde{J}_1} \, d \mu_{0}.
\end{equation}

\begin{example} \label{MaMa} We will present an example where the analogous result to Theorem 11.6.1 in \cite{Cover}
is not true.

Consider the shift invariant Markov probabilities $\mu_{J_j}=\mu_j, j=0,1,2,$ associated to the line stochastic matrices
\begin{equation} \label{feij20}P_j= \left(
\begin{array}{cc}
P^{1 1}_j & P^{1 2}_j\\
P^{2 1}_j & P^{2 2}_j
\end{array}
\right).
\end{equation}

Using the notation we considered before the $P_j$ is associated to $J_j$, when $j=0,1$, and $P_2$ is associated to $\tilde{J}_1.$ (and therefore $\mu_2=\mu_{\tilde{J}_1}).$

\begin{remark} \label{irri} Given a fixed $j$, when $P^{1 1}_j = P^{21}_j$ and $P^{1 2}_j= P^{2 2}_j,$
we get the i.i.d Bernoulli process with probabilities $(P^{1 1}_j, P^{12}_j)$.
\end{remark}

In this case the Jacobian $J_j$ is constant in the cylinder $\overline{r,s}, r,s=1,2,$ and
takes the value $P^{s r}_j$. The initial vector of probability is
$$\pi_j=(\pi^1_j,\pi^2_j)=(\frac{-1 +P^{2 2}_j }{-2 + P^{1 1}_j +P^{2 2}_j}, \frac{-1 +P^{1 1}_j }{-2 + P^{1 1}_j +P^{2 2}_j}) .$$

The entropy of $\mu_j, j=0,1,2$ is
$$ -\sum_{r, s } \pi_j^r \, P^{r s }_j \log P^{r s }_j= -\sum_{r, s } \pi_j^r \, P^{r s }_j \log P^{ s r }_j =- \int \log J_j d \mu_j. $$

For different choices of $P^{1 1}_0 , P^{2 2}_0, P^{1 1}_1, P^{2 2}_1, P^{1 1}_2 , P^{2 2}_2$ the value
$$\frac{d}{d \lambda}D_{KL}(\mu_{\mathfrak{J}_\lambda}, \mu_1)|_{\lambda=0}$$
can be positive or negative. In this way the Second Law regime and the fluctuation regime can occur for these triples.

Taking $P^{1 1}_0=0.2 , P^{2 2}_0=0.2 , P^{1 1}_1= 0.15, P^{2 2}_1=0.92, P^{1 1}_2=0.9 , P^{2 2}_2=0.12$, we get that $ \frac{d}{d \lambda}D_{KL}(\mu_{\mathfrak{J}_\lambda}, \mu_1)|_{\lambda=0}=0.362455>0$, but
$$ \frac{d}{d \lambda}D_{KL}(\mu_{\mathfrak{J}_\lambda}, \mu_1)|_{\lambda=0}= 0.2750>0$$
and
$$\int (\log J_1 - \log J_0) d \mu_0 + \int (\log J_0 - \log J_1) d \mu_{\tilde{J}_1}=-0.3578<0.$$

There are values of $P^{1 1}_0 , P^{2 2}_0, P^{1 1}_1, P^{2 2}_1, P^{1 1}_2 , P^{2 2}_2$ such that
$ \frac{d}{d \lambda}D_{KL}(\mu_{\mathfrak{J}_\lambda}, \mu_1)|_{\lambda=0}>0$ and $ \int (\log J_1 - \log J_0) d \mu_0 + \int (\log J_0 - \log J_1) d \mu_{\tilde{J}_1}>0.$





If we assume that all three probabilities  are i.i.d Bernoulli (see Remark \ref{irri}) we get that
$$\frac{d}{d \lambda}D_{KL}(\mu_{\mathfrak{J}_\lambda}, \mu_1)|_{\lambda=0}=\int (\log J_1 - \log J_0) d \mu_0 + \int (\log J_0 - \log J_1) d \mu_{\tilde{J}_1}= $$
$$ D_{K L} (\mu_2,\mu_0) - ( D_{K L} (\mu_{\tilde{J}_1},\mu_0) + D_{K L} (\mu_0,\mu_1))=$$
\begin{equation} \label{aba83469} (P^{1 1}_0 - P^{1 1}_2) (\log[1 - P^{1 1}_0 ] - \log[P^{1 1}_0 ] - \log[1 - P^{1 1}_1 ] + \log[P^{1 1}_1 ]).
\end{equation}

The above expression shows why Theorem 16.6.1 in \cite{Cover} is true but the analogous results are not true in the dynamical setting.
\end{example}

\medskip

\subsubsection{First problem} \label{tre}

Assume that $J_1\notin \Theta_1$ (associated to $\mu_1$) and $\mu_0$ (associated to $\tilde{J}=J_0\in \Theta_1$) satisfy
\begin{equation} \label{utro227} D_{KL}(\mu_1, \mu_0)= \min_{\tilde{J}\in \Theta_1} D_{KL}( \mu_1, \mu_{\tilde{J}}).
\end{equation}
Consider the $\mathfrak{J}_\lambda\in \Theta$, $\lambda\in [0,1]$ such that
$$ \mathfrak{J}_\lambda = \lambda \tilde{J}_1 + (1- \lambda) J_0,$$
where $\tilde{J}_1\neq J_0$.

We denote by $\mu_{\mathfrak{J}_\lambda}$ the equilibrium probability associated to the Jacobian $\mathfrak{J}_\lambda$

\begin{proposition} \label{erte2377}
$$\frac{d}{d \lambda}|_{\lambda=0}D_{KL}(\mu_1, \mu_{\mathfrak{J}_\lambda})=\int (1- \frac{\tilde{J}_1}{ J_0}) d \mu_1.$$

\end{proposition}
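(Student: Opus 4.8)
The statement to prove is Proposition \ref{erte2377}: with $\mathfrak{J}_\lambda = \lambda J_2 + (1-\lambda) J_0$ and $\mu_\lambda$ the Gibbs probability for the (normalized!) potential $\log \mathfrak{J}_\lambda$, one has
$$\frac{d}{d\lambda}\Big|_{\lambda=0} D_{KL}(\mu_1,\mu_\lambda) = \int \Big(1 - \frac{J_2}{J_0}\Big)\, d\mu_1.$$
The first thing I would do is unwind the definition \eqref{a1}: since $\mu_1$ is fixed with Jacobian $J_1$,
$$D_{KL}(\mu_1,\mu_\lambda) = \int \log J_1\, d\mu_1 - \int \log \mathfrak{J}_\lambda \, d\mu_1.$$
The first term is constant in $\lambda$, so everything reduces to computing $\frac{d}{d\lambda}\big|_{\lambda=0}\int \log \mathfrak{J}_\lambda\, d\mu_1$. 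Here it is important to notice that $\mu_1$ does \emph{not} move with $\lambda$, so there is no Leibniz-type correction term (unlike in the Second Problem, where the measure being integrated against is $\mu_\lambda$); this is precisely why the First Problem is ``the easier one'' as the text remarks.

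Next I would differentiate under the integral sign. Since $\mathfrak{J}_\lambda = J_0 + \lambda(J_2 - J_0)$ is affine in $\lambda$ with $\mathfrak{J}_0 = J_0$ a strictly positive Hölder function bounded away from $0$, the map $\lambda \mapsto \log \mathfrak{J}_\lambda(x)$ is smooth uniformly in $x$ on a neighbourhood of $\lambda = 0$, so dominated convergence justifies
$$\frac{d}{d\lambda}\Big|_{\lambda=0}\int \log \mathfrak{J}_\lambda\, d\mu_1 = \int \frac{d}{d\lambda}\Big|_{\lambda=0} \log \mathfrak{J}_\lambda\, d\mu_1 = \int \frac{J_2 - J_0}{J_0}\, d\mu_1.$$
Therefore
$$\frac{d}{d\lambda}\Big|_{\lambda=0} D_{KL}(\mu_1,\mu_\lambda) = -\int \frac{J_2 - J_0}{J_0}\, d\mu_1 = \int \Big(1 - \frac{J_2}{J_0}\Big)\, d\mu_1,$$
which is exactly the claimed formula.

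**The subtle point.** The one step that genuinely deserves care — and where I expect the main obstacle to lie — is the passage from ``$\mathfrak{J}_\lambda$ is a Jacobian'' to ``$\log \mathfrak{J}_\lambda$ is the correct object to integrate.'' One must check that $\mathfrak{J}_\lambda$ really is a Jacobian, i.e.\ that $\mathcal{L}_{\log \mathfrak{J}_\lambda}(1) = 1$: this follows because the Ruelle operator is linear in the weight, so $\mathcal{L}_{\log \mathfrak{J}_\lambda}(1)(x) = \sum_{\sigma(y)=x} \mathfrak{J}_\lambda(y) = \lambda \sum_{\sigma(y)=x} J_2(y) + (1-\lambda)\sum_{\sigma(y)=x} J_0(y) = \lambda + (1-\lambda) = 1$, using that $J_0, J_2$ are themselves normalized Jacobians (this is the computation \eqref{triu} essentially). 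Consequently $\log \mathfrak{J}_\lambda$ is already normalized, $\mu_\lambda = \mu_{\log \mathfrak{J}_\lambda}$ has Jacobian exactly $\mathfrak{J}_\lambda$, and formula \eqref{a1} applies verbatim with $\log J_\lambda$ replaced by $\log \mathfrak{J}_\lambda$ — no normalization projection $\Pi$ and no eigenfunction/eigenvalue corrections enter. Once this is in place the rest is the elementary differentiation above. I would also note that the same computation at a general $\lambda_0 \in [0,1)$ gives $\frac{d}{d\lambda} D_{KL}(\mu_1,\mu_\lambda) = \int(1 - \frac{J_2 - J_0}{\mathfrak{J}_\lambda})\,d\mu_1$ after simplification, which is consistent with convexity of $\lambda \mapsto D_{KL}(\mu_1,\mu_\lambda)$ (indeed $\lambda \mapsto -\log \mathfrak{J}_\lambda(x)$ is convex for each $x$), though only the value at $\lambda = 0$ is needed for the statement.
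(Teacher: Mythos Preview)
Your proof is correct and follows essentially the same approach as the paper: write $D_{KL}(\mu_1,\mu_\lambda)=\int\log J_1\,d\mu_1-\int\log\mathfrak{J}_\lambda\,d\mu_1$, differentiate under the integral (the paper does this without the dominated-convergence justification you supply), and simplify $-\frac{J_2-J_0}{J_0}=1-\frac{J_2}{J_0}$. One small slip in your final aside: at a general $\lambda$ the derivative is $-\int\frac{J_2-J_0}{\mathfrak{J}_\lambda}\,d\mu_1$, not $\int\bigl(1-\frac{J_2-J_0}{\mathfrak{J}_\lambda}\bigr)\,d\mu_1$ (the ``$1-$'' only appears after the simplification specific to $\lambda=0$, where $\mathfrak{J}_0=J_0$).
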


\begin{proof}
Denote $D_\lambda = D_{KL}(\mu_1, \mu_{\mathfrak{J}_\lambda})$. Then,
$$\frac{d \,D_\lambda}{d \lambda}|_{\lambda=0} =- \frac{d}{d \lambda} \int \log \mathfrak{J}_\lambda d \mu_1|_{\lambda=0} = -[\int \frac{d}{d \lambda} \log ( \tilde{J}_1 \lambda + (1- \lambda) J_0) d \mu_1|_{\lambda=0} ]=$$
$$ -\int \frac{\tilde{J}_1 - J_0}{ \mathfrak{J}_\lambda} d \mu_1|_{\lambda=0} = \int (1- \frac{\tilde{J}_1}{ J_0}) d \mu_1.$$

\end{proof}
\medskip

\begin{proposition} \label{erte23} Suppose the type-2 Pythagorean inequality is true
$$ D_{KL} (\mu_1 , \mu_{\tilde{J}_1}) \geq D_{KL} (\mu_1,\mu_0)+ D_{KL} (\mu_0 , \mu_{\tilde{J}_1}).$$
Then,
$$\frac{d}{d \lambda}|_{\lambda=0}D_{KL}(\mu_1, \mu_{\mathfrak{J}_\lambda})\geq D_{KL} (\mu_0 , \mu_{\tilde{J}_1})>0.$$

\end{proposition}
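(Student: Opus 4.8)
The plan is to combine the closed form of the derivative supplied by Proposition~\ref{erte2377} with the reformulation of the type-2 Pythagorean inequality that was already isolated (in the $\log J$ discussion) in Subsection~\ref{to}.

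First I would invoke Proposition~\ref{erte2377} to write
$$\frac{d}{d\lambda}\Big|_{\lambda=0} D_{KL}(\mu_1,\mu_\lambda)=\int\Big(1-\frac{J_2}{J_0}\Big)\,d\mu_1,$$
and I would record, from \eqref{triu}--\eqref{tritu7}, that the integrand $\varphi:=1-\frac{J_2}{J_0}$ lies in the kernel of $\mathcal{L}_{\log J_0}$, so $\int\varphi\,d\mu_0=0$ and $\varphi$ is a tangent vector to $\mathcal{N}$ at $\mu_0$. Hence the derivative equals $\int\varphi\,d\mu_1=\int\varphi\,(d\mu_1-d\mu_0)$, i.e. a ``transport'' of the $\mu_0$-mean-zero function $\varphi$ from $\mu_0$ to $\mu_1$.

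Second, I would unwind the hypothesis. Expanding the three divergences via \eqref{a1} and cancelling the term $\int\log J_1\,d\mu_1$ common to $D_{KL}(\mu_1,\mu_2)$ and $D_{KL}(\mu_1,\mu_0)$, the type-2 Pythagorean inequality $D_{KL}(\mu_1,\mu_2)\ge D_{KL}(\mu_1,\mu_0)+D_{KL}(\mu_0,\mu_2)$ is equivalent to $\int(\log J_0-\log J_2)\,d\mu_1\ge\int(\log J_0-\log J_2)\,d\mu_0=D_{KL}(\mu_0,\mu_2)$, which is exactly \eqref{pia2} (and the reformulation appearing in \eqref{china0}). I would then link the two sides through the elementary convexity inequality $\log t\le t-1$ with $t=\tfrac{J_2}{J_0}$, which governs the defect $\tfrac{J_2}{J_0}-1-\log\tfrac{J_2}{J_0}\ge 0$ and allows one to pass between $\int\varphi\,d\mu_1$ and $\int(\log J_0-\log J_2)\,d\mu_1$; since $\varphi$ has $\mu_0$-mean zero while $\log J_0-\log J_2$ has $\mu_0$-mean $D_{KL}(\mu_0,\mu_2)$, relocating this defect is what converts the reformulated hypothesis into $\frac{d}{d\lambda}|_{\lambda=0}D_{KL}(\mu_1,\mu_\lambda)\ge D_{KL}(\mu_0,\mu_2)$. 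Strict positivity $D_{KL}(\mu_0,\mu_2)>0$ then follows because $J_0\neq J_2$ forces $\mu_0\neq\mu_2$ and the KL-divergence vanishes only on the diagonal.

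The step I expect to be the crux is precisely this last comparison: the naive pointwise bound between $1-\tfrac{J_2}{J_0}$ and $\log J_0-\log J_2$ points in the ``wrong'' direction, so one cannot simply integrate it. The passage has to exploit the mean-zero identity $\int(1-\tfrac{J_2}{J_0})\,d\mu_0=0$ together with the convexity of $\lambda\mapsto D_{KL}(\mu_1,\mu_\lambda)$ (which holds because $\log\mathfrak{J}_\lambda=\log(\lambda J_2+(1-\lambda)J_0)$ is concave in $\lambda$) and, where needed, the First-problem minimality of $\mu_0$ set up in Subsection~\ref{tre}, so that $\lambda=0$ is a minimum of this convex function and the derivative there is already nonnegative. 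Once this sign bookkeeping is settled, the remaining manipulations are the routine ones indicated above.
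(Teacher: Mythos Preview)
Your setup is correct: Proposition~\ref{erte2377} gives the derivative as $\int(1-J_2/J_0)\,d\mu_1$, and the hypothesis is equivalent to $\int(\log J_0-\log J_2)\,d\mu_1\ge D_{KL}(\mu_0,\mu_2)$ as in \eqref{pia2}. You are also right that the pointwise bound goes the wrong way: since $1-\tfrac{1}{x}\le\log x$ for all $x>0$, one has $1-\tfrac{J_2}{J_0}\le\log J_0-\log J_2$, so integrating against $\mu_1$ only produces an \emph{upper} bound on the derivative by the very quantity the hypothesis bounds from below. Your proposed rescues do not close this gap. Convexity of $\lambda\mapsto D_{KL}(\mu_1,\mu_\lambda)$ yields $\frac{d}{d\lambda}\big|_{\lambda=0}D_{KL}(\mu_1,\mu_\lambda)\le D_{KL}(\mu_1,\mu_2)-D_{KL}(\mu_1,\mu_0)$, again an upper bound; the mean-zero identity $\int(1-J_2/J_0)\,d\mu_0=0$ is of no help when the integral in question is against $\mu_1$; and ``First-problem minimality of $\mu_0$'' is not among the hypotheses of the Proposition, so you cannot invoke it.

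The paper's own argument proceeds exactly by the step you flagged: it asserts ``$1-\tfrac{1}{x}\ge\log x$'' and from it deduces $\int(1-\tfrac{J_2}{J_0})\,d\mu_1\ge\int(\log J_0-\log J_2)\,d\mu_1$. That elementary inequality is false (e.g.\ $x=e$ gives $1-e^{-1}<1$), so the paper's proof is in error at this point. In fact the Proposition as stated does not hold: for independent Bernoulli measures on $\{0,1\}^{\mathbb N}$ with $P(0)=p_j$ and $p_0=\tfrac12$, $p_2=\tfrac{1}{10}$, $p_1=\tfrac{3}{5}$, one checks $D_{KL}(\mu_1,\mu_2)\approx 0.751 > 0.020+0.511 \approx D_{KL}(\mu_1,\mu_0)+D_{KL}(\mu_0,\mu_2)$, so the type-2 Pythagorean hypothesis holds, yet $\int(1-J_2/J_0)\,d\mu_1=0.8(2p_1-1)=0.16<0.511\approx D_{KL}(\mu_0,\mu_2)$. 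The gap you sensed is therefore genuine and cannot be repaired under the stated hypothesis alone.
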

\begin{proof}
As $1 - \frac{1}{x} \geq \log x$, we get from  Proposition \ref{erte2377}
$$ \frac{d \,D_\lambda}{d \lambda}|_{\lambda=0} =\int (1- \frac{\tilde{J}_1}{ J_0}) d \mu_1 \geq \int (\log J_0 - \log \tilde{J}_1) d \mu_1=$$
$$ \int (\log J_1 - \log \tilde{J}_1) d \mu_1 - \int( \log J_1 - \log J_0) d \mu_1 =$$
$$ D_{KL} (\mu_1, \mu_{\tilde{J}_1}) -D_{KL} (\mu_1, \mu_0)\geq D_{KL} (\mu_0 , \mu_{\tilde{J}_1})>0 .$$

\end{proof}

\begin{proposition} \label{erte2333} Suppose
$$\frac{d}{d \lambda}|_{\lambda=0}D_{KL}(\mu_1, \mu_{\mathfrak{J}_\lambda})< 0,$$
then
is true the type-2 triangle inequality
\begin{equation} \label{oop} D_{KL} (\mu_1 , \mu_{\tilde{J}_1})< D_{KL} (\mu_1 , \mu_0) +D_{KL} (\mu_0,\mu_{\tilde{J}_1}).
\end{equation}
\end{proposition}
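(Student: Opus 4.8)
The plan is to obtain Proposition~\ref{erte2333} by contraposition from Proposition~\ref{erte23}, after observing that the strict type-2 triangle inequality \eqref{oop} is exactly the logical negation of the type-2 Pythagorean inequality $D_{KL}(\mu_1,\mu_2)\ge D_{KL}(\mu_1,\mu_0)+D_{KL}(\mu_0,\mu_2)$. First I would record, from Proposition~\ref{erte2377}, the exact value
\[
\frac{d}{d\lambda}\Big|_{\lambda=0} D_{KL}(\mu_1,\mu_\lambda)=\int\Big(1-\frac{J_2}{J_0}\Big)\,d\mu_1,
\]
so that the hypothesis of Proposition~\ref{erte2333} reads precisely $\int(1-\tfrac{J_2}{J_0})\,d\mu_1<0$.

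Then I would argue as follows. Suppose, for contradiction, that \eqref{oop} fails; this means the type-2 Pythagorean inequality $D_{KL}(\mu_1,\mu_2)\ge D_{KL}(\mu_1,\mu_0)+D_{KL}(\mu_0,\mu_2)$ holds, so Proposition~\ref{erte23} applies and yields
\[
\frac{d}{d\lambda}\Big|_{\lambda=0} D_{KL}(\mu_1,\mu_\lambda)\ \ge\ D_{KL}(\mu_0,\mu_2)\ >\ 0,
\]
the strict positivity coming from $J_0\neq J_2$ (hence $\mu_0\neq\mu_2$). This contradicts the standing hypothesis, so \eqref{oop} must hold.

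For completeness I would also indicate the self-contained computation behind this. Using the reformulation of Subsection~\ref{fi} (cf.~\eqref{pia2}), inequality \eqref{oop} is equivalent to $\int(\log J_0-\log J_2)\,d\mu_1<\int(\log J_0-\log J_2)\,d\mu_0$, and one then compares both sides with $\int(1-\tfrac{J_2}{J_0})\,d\mu_1$ and $\int(1-\tfrac{J_2}{J_0})\,d\mu_0$ through the elementary inequality $1-\tfrac1x\le\log x$ (applied with $x=\tfrac{J_0}{J_2}$), together with the Gibbs normalization $\int(1-\tfrac{J_2}{J_0})\,d\mu_0=0$ from \eqref{tritu7} and the convexity of $\lambda\mapsto D_{KL}(\mu_1,\mu_\lambda)$, which holds because $\lambda\mapsto\log(\lambda J_2+(1-\lambda)J_0)$ is concave.

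The step I expect to be delicate is orienting the estimates correctly: the elementary bounds $1-\tfrac1x\le\log x\le x-1$ only give one-sided control of $\int(\log J_0-\log J_2)\,d\mu_1$ in terms of the derivative $\tfrac{d}{d\lambda}|_{\lambda=0}D_{KL}(\mu_1,\mu_\lambda)$, so in the direct route one must combine them with the normalization $\int(1-\tfrac{J_2}{J_0})\,d\mu_0=0$ and with convexity of $g(\lambda)=D_{KL}(\mu_1,\mu_\lambda)$ to close the argument. The contrapositive route via Proposition~\ref{erte23} bypasses this subtlety entirely, which is why I would present it as the main argument.
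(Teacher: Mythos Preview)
Your contrapositive argument via Proposition~\ref{erte23} is correct and is a clean alternative to the paper's proof. The paper instead argues directly: using the elementary pointwise bound it compares
\[
D_{KL}(\mu_1,\mu_2)-D_{KL}(\mu_1,\mu_0)=\int(\log J_0-\log J_2)\,d\mu_1
\]
with the derivative $\int\bigl(1-\tfrac{J_2}{J_0}\bigr)\,d\mu_1<0$ to obtain the stronger intermediate fact $D_{KL}(\mu_1,\mu_2)<D_{KL}(\mu_1,\mu_0)$, and then simply adds the nonnegative term $D_{KL}(\mu_0,\mu_2)$ to reach \eqref{oop}. Your route is more economical in that it packages the only real computation inside the already established Proposition~\ref{erte23}; the paper's direct route, on the other hand, yields the extra information $D_{KL}(\mu_1,\mu_2)<D_{KL}(\mu_1,\mu_0)$ and does not require the convexity of $\lambda\mapsto D_{KL}(\mu_1,\mu_\lambda)$ or the normalization \eqref{tritu7} that you flagged as potentially needed in your ``self-contained'' sketch. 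In fact the direct argument uses nothing beyond a single application of the same pointwise inequality that underlies Proposition~\ref{erte23}, so it is no more delicate than the result you are invoking.
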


\begin{proof}
Note that
$$ D_{KL} (\mu_1, \mu_{\tilde{J}_1}) -D_{KL} (\mu_1, \mu_0)=$$
\begin{equation} \label{oops1} \int (\log J_0 - \log \tilde{J}_1) d \mu_1 \leq\int (1- \frac{\tilde{J}_1}{ J_0}) d \mu_1 = \frac{d}{d \lambda}|_{\lambda=0}D_{KL}(\mu_1, \mu_{\mathfrak{J}_\lambda})< 0,
\end{equation}

Then
$$ D_{KL} (\mu_1 , \mu_{\tilde{J}_1})< D_{KL} (\mu_1 , \mu_0) < D_{KL} (\mu_1 , \mu_0) +D_{KL} (\mu_0,\mu_{\tilde{J}_1}).$$

The above is equivalent to
$$ D_{KL} (\mu_1 , \mu_{\tilde{J}_1}) - D_{KL} (\mu_1 , \mu_0)> D_{KL} (\mu_0,\mu_{\tilde{J}_1})>0,$$
which is a contradiction to \eqref{oops1}.
\medskip

\end{proof}

\section{Appendix - On Fourier-like Hilbert basis}  \label{marbas}

Consider $M=\{0,1\}^\mathbb{N}$ and a equilibrium probability $\mu_A=\mu_{\log J}$ on  $\{0,1\}^\mathbb{N}$ (notation of \cite{GKLM}), associated to a H\"older potential $A=\log J$, where $J$ is a Jacobian.

Given a finite word $x =(x_1,x_2,...,x_k)\in \{0,1\}^k$, $k \in \mathbb{N}$, we denote by $[x]=[x_1,x_2,...,x_k]$ the associated cylinder set in
$\Omega=\{0,1\}^\mathbb{N}$.

For each $n$ denote by $\mathfrak{C}_n$ the set of all cylinders $[x]$ of length $n$ which is a partition of $M$.
The  lexicographic order $\preceq$
on $M=\{0,1\}^\mathbb{N}$ makes it  a totally ordered set.

Denote by $\hat{\mathfrak{S}}: M \to M$
a  function such that for any $x \in M$, we get $\hat{\mathfrak{S}} (1,x)=(0,x),$ and moreover, $\hat{\mathfrak{S}} (0,x)=(1,x).$ We denote by $\mathfrak{S}=\hat{\mathfrak{S}}|_{[1]}.$

Note that a function $\varphi$ in the kernel of the Ruelle operator $\mu_J$ is determined by its values on the cylinder $[0]$.
Indeed, if $\varphi$ is in the kernel, we get that for all $x$
$$ \varphi(1,x)= - \frac{J(0,x) \, \varphi(0,x) }{J(1,x)}.$$

This is equivalent to say that $\varphi$ can be expressed as
\begin{equation} \label{ere1} \varphi= \varphi \,\, \mathfrak{1}_{[0]} -   \frac{(J \circ \mathfrak{S} )\, (\varphi \circ \mathfrak{S} )}{ J \,}\, \, \mathfrak{1}_{[1]}.
\end{equation}

Initially, we will present a simple example of Fourier-like basis which will help to understand more general cases which will be addressed later.

\begin{example} \label{exx1} In this example $\mu$ is the probability of maximal entropy which is associated to the potential $-\log 2$.  In this case the functions
$\varphi$ {in  the kernel of} the Ruelle operator can be expressed as
\begin{equation} \label{ere2} \varphi= \varphi \,\, \mathfrak{1}_{[0]} -   \, (\varphi  \circ\mathfrak{S} )\, \, \mathfrak{1}_{[1]}.
\end{equation}

First, we  will present a  natural  Fourier-like basis for $L^2(\mu)$ (and later for the kernel of the Ruelle operator).

We order the cylinder sets in $\mathfrak{C}_n$ using this order.
For example, when
$n=2$ we get
$$ (0,0), (0,1),(1,0),(1,1),$$
and $n=3$ we get
\begin{equation} \label{acima} (0,0,0), (0,0,1),(0,1,0),(0,1,1),(1,0,0),(1,0,1),(1,1,0),(1,1,1).
\end{equation}

By abuse of language we can say that $(0,1,1)\preceq (1,0,0)$ and also that
$(0,1,1)\preceq (1,1,0)$.

Given $\mathfrak{C}_n$, we say that the cylinder $ [x] \in\mathfrak{C}_n$
is odd (respectively, even) if occupies an odd (respectively, even) position in the above defined order of cylinders. For example, in \eqref{acima} the cylinders $(0,0,0)$ and $(0,1,0)$ are odd and $(0,0,1)$ and $(0,1,1)$ are even.

We say that the cylinder $[x] \in\mathfrak{C}_n$ has the cylinder $[y] \in\mathfrak{C}_n$ as its next neighborhood at the right side if  $[x] \preceq [y]$, and there is no cylinder $[z] \in \mathfrak{C}_n$, such that, $[x] \preceq [z] \preceq [y]$. In this case we say that $[x],[y]$ is a neighborhood
pair of cylinders.

We will define an orthonormal family $\mathfrak{F}=\{\alpha_m,\beta_n, m\geq 2, n \geq 1 \}$ of linear independent continuous functions in
$L^2 (\mu)$, which is uniformly bounded on $L^2 (\mu)$ (all elements have $L^2$ norm equal to $1$) and also on  $C^0$.

For a given $n\geq 2$ we consider the function
$\alpha_n$ which is constant in each cylinder $[x]=(x_1,x_2,...,x_n)$ of $\mathfrak{C}_n$,
taking the value $1$, if in the ordering of cylinders in $\mathfrak{C}_n$ the cylinder $[x]$
it occupies an even position, and taking the value $-1$, if in the ordering of cylinders in $\mathfrak{C}_n$
it occupies an odd position.

For example,
$\alpha_2 = \mathfrak{1}_{(0,0)} - \mathfrak{1}_{(0,1)} + \mathfrak{1}_{(1,0)} - \mathfrak{1}_{(1,1)}.$

The functions $\alpha_n$ have $L^2$ norm equal to $1$. It is easy to see that
$<\alpha_n,\alpha_m>=0$, $n \neq m$, $m,n\geq 2$. It follows from \eqref{ere2} that
the functions $\alpha_n$, with $n\geq 2$, are orthogonal to the
kernel of $\op{L}_A$.

In a  little different procedure,
for a given $n\geq 2$ we consider the function
$\beta_n$ which is constant in  cylinders $[x]$ in $\mathfrak{C}_{n}$  in the following way: in the cylinder $[0]$ we define $\beta_n=\alpha_n$, for all $n$.  For $y$ on the cylinder $[1]$ we define $\beta_n(y)= -\alpha_n ( \mathfrak{S}(y)).$

For example, $\beta_2= \mathfrak{1}_{(0,0)} - \mathfrak{1}_{(0,1)} - \mathfrak{1}_{(1,0)} + \mathfrak{1}_{(1,1)}$ and
$$\beta_3 = \mathfrak{1}_{(0,0,0)} - \mathfrak{1}_{(0,0,1)} + \mathfrak{1}_{(0,1,0)} - \mathfrak{1}_{(0,1,1)}- \mathfrak{1}_{(1,0,0)} + \mathfrak{1}_{(1,0,1)} - \mathfrak{1}_{(1,1,0)} + \mathfrak{1}_{(1,1,1)} .$$

We define  $\beta_1= \mathfrak{1}_{(0)} - \mathfrak{1}_{(1)}$.

The functions $\beta_n$   are in the kernel of the Ruelle operator for the potential $-\log 2.$

The functions $\beta_n$ have $L^2$ norm equal to $1$. One can show  that
$<\beta_n,\beta_m>=0$, $n \neq m$, $m\geq 2, n\geq 1$. Moreover,
$<\alpha_n,\beta_m>=0,$ for all $m,n$.

The functions $\alpha_m$ and $\beta_n$, $m\geq 2,n \geq 1$, are H\"older continuous for the usual metric on $M=\{0,1\}^\mathbb{N}$.
The
family $\mathfrak{F}$ is the union of all functions $\alpha_n$ and $\beta_n$, $m\geq 2,n \geq 1$.

\begin{remark} \label{pois1}
One can show that the sigma algebra generated by the functions in $\mathfrak{F}$ is the Borel sigma-algebra in $M$. Indeed, one can get  any cylinder set on $\{0,1\}^\mathbb{N}$ as intersection of preimages of open sets for
a finite number of functions in $\mathcal{F}$. In order to illustrate this fact note that the cylinder $[0,0,0]$ can be obtained as
$$[0,0,0] = \alpha_3^{-1}(0,2) \cap \beta_3^{-1}(0,2) \cap \alpha_2^{-1}(0,2).$$
\end{remark}
It follows that  $\mathfrak{F}=\{\alpha_m,\beta_n, n\geq 2, m \geq 1 \}$ is an orthonormal basis for $L^2(\mu)$, where $\mu$ is the measure of maximal entropy.

\begin{remark} \label{pois2}
Using a similar reasoning one can show that the family
$\mathfrak{F}_0=\{\beta_m, m \geq 1 \}$ is an orthonormal basis for the kernel of the Ruelle operator
$\op{L}_{-\log 2}.$ The family $\mathfrak{F}_0$ is a Fourier-like family.
\end{remark}

\medskip

$\,\,\,\,\,\,\,\,\,\,\,\,\,\,\,\,\,\,\, \,\,\,\,\,\,\,\,\,\,\,\,\,\,\,\,\,\,\,\,\,\,\,\,\,\,\,\,\,\,\,\,\,\,\,\,\,\, \,\,\,\,\,\,\,\,\,\,\,\,\,\,\,\,\,\,\, \,\,\,\,\,\,\,\,\,\,\,\,\,\,\,\,\,\,\,\,\,\,\,\,\,\,\,\,\,\,\,\,\,\,\,\,\,\,\,\,\,\,\,\,\,\,\,\,\,\,\,\,\,\,\,\,\, \,\,\,\,\,\,\,\,\,\,\,\,\,\,\,\,\,\,\,\,\,\,\,\,\,\,\,\,\,\,\,\,\,\,\,\,\,\,\,\,\,\,\,\,\,\,\,\,\,\,\,\,\,\,\,\,\, \,\,\,\,\,\,\,\,\,\,\,\,\,\,\diamond$

\end{example}

\subsection{A Fourier-like  basis for the kernel in the case of Markov probabilities}  \label{marbas1}

Consider $M=\{0,1\}^\mathbb{N}$ and denote by $K$  the set of stationary Markov probabilities taking values in $\{0,1\}$. In this section, we will present explicit expressions for a Fourier-like  basis of the kernel of the associated Ruelle operator. The functions on the basis are constant in cylinders.

Consider a  shift invariant
Markov probability $\mu$ obtained from a  row stochastic matrix $(P_{i,j})_{i,j=0,1}$ with positive entries and the initial left invariant  vector of probability $\pi=(\pi_0,\pi_1)\in \mathbb{R}^2$.  We denote by $A$ the H\"older potential associated to such probability  $\mu$ (see Example 6 in \cite{LR}). There exists an explicit countable orthonormal basis $\hat{a}_x$, indexed by finite words $[x]$, for the set of H\"older functions  in the kernel of the Ruelle operator $\op{L}_A$  (see \cite{LR1} or the paragraph after expression \eqref{tororo15}).

Given $r \in (0,1)$ and $s\in (0,1)$ we denote
 \begin{equation} \label{tororo2} P= \left(
\begin{array}{cc}
P_{0,0} & P_{0,1}\\
P_{1,0} & P_{1,1}
\end{array}\right)=   \left(
\begin{array}{cc}
r & 1-r\\
1-s  & s
\end{array}\right) .
\end{equation}

In this way $(r,s)\in (0,1) \times (0,1)$ parameterize all {\bf row} stochastic matrices we are interested.

The explicit expression for $\mu$ is
\begin{equation} \label{ppp} \mu [x_1,x_2,..,x_n] = \pi_{x_1} \,P_{x_1,x_2}\,P_{x_2,x_3}\,...\, P_{x_{n-1},x_n}.
\end{equation}

Recall that in the Markov case  the family of H\"older functions
\begin{align}
   e_{[x]} =\frac{1}{\sqrt{\mu([x])}} \sqrt{\frac{P_{x_n,1}}{P_{x_n,0}\,}} \, \mathfrak{1}_{[x0]} - \frac{1}{\sqrt{\mu([x])}} \sqrt{\frac{P_{x_n,0}}{P_{x_n,1} }} \, \mathfrak{1}_{[x1]},
    \label{eq52}
    \end{align}
where $x$ is a finite word  is an orthonormal (Haar) Hilbert basis for $\mathcal{L}^2 (\mu)$ (see \cite{KS} for a general expression  and \cite{CHLS}  for the above one). The integral of the functions $e_x$ is equal to zero. To be more precise we need to add to this family the functions $\mathfrak{1}_{[0]}$ and $\mathfrak{1}_{[1]}$ in order to have a basis.

\begin{theorem}  \label{kert} For any two by two stochastic  matrix $P$ there exists an orthogonal basis of the kernel of  the Ruelle operator
$\op{L}_A$, denoted by
$\mathcal{B}=\{\gamma_n, n \in \mathbb{N}\}$, and constants $\alpha>0, \beta>0$, such that

I) the  functions $\gamma_n$, $n \in \mathbb{N}$, in   the family $ \mathcal{B} $ have $C^0$ and $L^2(\mu_A)$ norms uniformly bounded above by the constant $\beta>0$,

II) the  functions $\gamma_n$, $n \in \mathbb{N}$, in   the family $ \mathcal{B} $ have  $C^0$ and $L^2(\mu_A)$ norms uniformly bounded below by the constant $\alpha>0$,

\end{theorem}

\begin{proof}

From \cite{LR} it is known that
for each finite word $(x_1,x_2,..,x_n)$, the function
\begin{eqnarray*}
 a_x & =& \frac{ \sqrt{\pi_{x_1}  }} {\sqrt{\pi_{0}}\sqrt{P_{0,x_1} }} \,\, e_{[0,x_1, x_2,..,x_n]} - \,\frac{ \sqrt{\pi_{x_1}  }} {\sqrt{\pi_{1}}\sqrt{P_{1,x_1} }}\,\,e_{[1,x_1, x_2,..,x_n]}\\
& = &  \frac{ \sqrt{\pi_{x_1}  }} {\sqrt{\pi_{0}}\sqrt{P_{0,x_1} }} \,   \frac{1}{\sqrt{\mu([0 x])}}\,[ \sqrt{\frac{P_{x_n,1}}{P_{x_n,0}\,}} \, \mathfrak{1}_{[0x0]} -  \sqrt{\frac{P_{x_n,0}}{P_{x_n,1} }} \, \mathfrak{1}_{[0x1]} ]
\end{eqnarray*}
 \begin{equation} \label{tororo15}  -\,\frac{ \sqrt{\pi_{x_1}  }} {\sqrt{\pi_{1}}\sqrt{P_{1,x_1} }} \,\frac{1}{\sqrt{\mu([1 x])}} \,[\,\sqrt{\frac{P_{x_n,1}}{P_{x_n,0}\,}} \, \mathfrak{1}_{[1x0]} -  \sqrt{\frac{P_{x_n,0}}{P_{x_n,1} }} \, \mathfrak{1}_{[1x1]}  ]. \end{equation}
is H\"older and in the kernel of the Ruelle operator. When $x$ ranges in the set of finite words we get that $\hat{a}_x=\frac{a_x}{|a_x|}$ is an orthonormal Haar basis for the H\"older functions on the kernel of the Ruelle operator $\op{L}_A$ associated to $\mu$ (see \cite{LR1}). In order to be more precise we need to add two more functions to the family to get a basis (see \cite{LR1}).

The $L^2$ norm of $a_x$ does not depend on the finite word $x$. Note that this family is not Fourier-like because
the $C^0$ norm  of $a_x$ is not uniformly bounded when $x$ ranges in the set of all finite words.

Note that the values  $\frac{ \sqrt{\pi_{j}  }} {\sqrt{\pi_{0}}\sqrt{P_{0,j} }}, \frac{ \sqrt{\pi_{j}  }} {\sqrt{\pi_{1}}\sqrt{P_{1,j} }}, \frac{\sqrt{P_{i,j}}}{\sqrt{P_{m,n}}\,}$, $i,j,m,n=0,1$, are bounded above by a constant $\beta>0$ and below by a constant $\alpha>0$.

We denote by $V$ the subspace of $L^2(\mu)$ generated by the span of the functions $a_x$, where $x$ is a finite word. If $\varphi$ in $V$, then the extension of $\op{L}_A$ to $V$ is such that $\op{L}_A(\varphi)=0.$

Denote by $\mathfrak{C}_n$ the set of all cylinders of length $n$ which is a partition of $M$. The sets of the form $[0x0], [0x1],[1x0],[1x1],$ where
$x$ ranges in $\mathfrak{C}_n$, is also a partition of $M$ (defines the set
$\mathfrak{C}_{n+2}$).

Note that for a fixed $n$ and a fixed cylinder $x=(x_1,x_2,..,x_n)\in \mathfrak{C}_n$
$$\frac{\sqrt{\mu (x 0)}}{\sqrt{\mu (x 1)}}= \frac{\sqrt{\pi_{x_1}\, P_{x_1,x_2 } \,...P_{x_{n-1},x_n }  P_{x_{n},0 }}  }{\sqrt{ \pi_{x_1}\, P_{x_1,x_2 } \,...P_{x_{n-1},x_n } P_{x_{n},1 }}  }=
\frac{\, \sqrt{P_{x_n,0 }}\,  }{\sqrt{ \,P_{x_n,1 }\,}  }
$$
and, for fixed $\mu$, this value is bounded above and below by a bound which is independent of $n$ and the cylinder  $x$.

Given $x$ set $b_x$ as
$$\sqrt{\mu (x 0)} a_x=b_x.$$

The function $b_x$ is continuous and  uniformly bounded in the $C^0$ norm. The $L^2$ norm and also the $C^0$ norm of $b_x$ are uniformly bounded above by a constant $\beta>0$, when $x$ ranges in the set of all cylinders.  Note also that the values of the norm $|b_x(y)|$, $y\in M$,  are uniformly bounded below by a constant $\alpha,$ independent of the finite word $x$.

In a more explicit form:
for $x=(x_1,x_2,..,x_n)$, we get
\begin{eqnarray*}
 b_x & =& \frac{ \sqrt{\pi_{x_1}  }} {\sqrt{\pi_{0}}\sqrt{P_{0,x_1} }} \,\, e_{[0,x_1, x_2,..,x_n]} - \,\frac{ \sqrt{\pi_{x_1}  }} {\sqrt{\pi_{1}}\sqrt{P_{1,x_1} }}\,\,e_{[1,x_1, x_2,..,x_n]}\\
& = &  \frac{ \sqrt{\pi_{x_1}  }} {\sqrt{\pi_{0}}\sqrt{P_{0,x_1} }} \,   \,[ \sqrt{\frac{P_{x_n,1}}{P_{x_n,0}\,}} \, \mathfrak{1}_{[0x0]} -  \sqrt{\frac{P_{x_n,0}}{P_{x_n,1} }} \, \mathfrak{1}_{[0x1]} ]
\end{eqnarray*}
 \begin{equation} \label{tororo1555}  -\,\frac{ \sqrt{\pi_{x_1}  }} {\sqrt{\pi_{1}}\sqrt{P_{1,x_1} }}  \frac{\, \sqrt{P_{x_n,0 }}\,  }{\sqrt{ \,P_{x_n,1 }\,}  }  \,[\,\sqrt{\frac{P_{x_n,1}}{P_{x_n,0}\,}} \, \mathfrak{1}_{[1x0]} -  \sqrt{\frac{P_{x_n,0}}{P_{x_n,1} }} \, \mathfrak{1}_{[1x1]}  ]. \end{equation}

Given $x$, the possible values attained by $b_x$ in the cylinders $[0x0],[0x1],[1x0],[1x1]$ are in a finite set, when $[x]$ ranges in the set of all possible cylinders with different sizes. Note that for a  general stochastic matrix $P$  some of these
possible values can coincide (but not for a generic - on the parameters $(r,s)\in(0,1)\times(0,1)$ - matrix $P$). But his will not be a problem.

Given $n$, note that the support of the functions $b_x$ are all disjoint
when $[x]$ ranges in the set $ \mathfrak{C}_n$. The union of the supports is the set $M$. Remember that  for a fixed $n$, when $x$ ranges in the set of all words of length $n$, the cylinders $[i,x,j]$, $i,j=0,1$, determine the  partition  $ \mathfrak{C}_{n+2}$ of $M$.

For each $n$ we are going to define a function
$\gamma_n$ of the form
$\gamma_n= \sum_{x\in \mathfrak{C}_n} s_x\,  b_x,$ where  all $s_x>0$ are close to $1$ in such way that the function  $\sum_{x\in \mathfrak{C}_n} s_x\,  b_x$ has $C^0$ and $L^2$ norm smaller than $\beta>0$, and $L^2$ norm larger than $\alpha>0$. Moreover,  we get that $|s_x b_x|>\alpha$, for all word $x$ of any length.

For each $n$, we set $\gamma_n$ as  the continuous function $\gamma_n= \sum_{x\in \mathfrak{C}_n} s_x\,  b_x.$  The family
$\gamma_n$, $n \in \mathbb{N}$, is  orthogonal, $C^0$ and $L^2(\mu_A)$ uniformly bounded, but not orthonormal. Dividing by the norm we get an orthonormal family $\mathfrak{F}_0$ (and for simplification we will also denote its elements by $\gamma_n$)

We claim that  the sigma algebra generated by $\gamma_n$, $n \in \mathbb{N},$ contains all cylinders of  all sizes. This claim can be obtained  from a tedious procedure  following the reasoning  of Remark \ref{pois2} of Example \ref{exx1} and is left for the reader.

As  the sigma-algebra generated by all $\gamma_n$, $n \in \mathbb{N}$, is the Borel sigma algebra, the span of the family of all $\gamma_n$,  $n \in \mathbb{N}$, contains the set of H\"older functions on the kernel of the Ruelle operator. From this follows that $\mathfrak{F}_0$ is an orthonormal basis of the kernel of the Ruelle operator in the case of Markov probabilities.
$\mathfrak{F}_0$ is a Fourier-like basis.
\end{proof}

\subsection{A Fourier-like basis for the space $L^2(\mu)$  in the general case of equilibrium probabilities on $\{0,1\}^\mathbb{N}$} \label{ger}

We point out that a similar (but more complex) procedure as in Theorem \ref{kert} allows one to
get a family $\rho_n, n \in \mathbb{N},$ which is a Fourier-like basis of the space $L^2(\mu)$, where $\mu$ is a equilibrium  probability on $\{0,1\}^\mathbb{N}$ for a H\"older potential $A=\log J$.

It follows from \cite{CHLS} (using results from \cite{KS}) that
the family of H\"older functions (called Haar family)
    \begin{equation} \label{eert1}
    e_x  = \sqrt{\frac{\mu([x1])}{\mu([x0]) \, \mu([x])}} \, 1_{[x0]} - \sqrt{\frac{\mu([x0])}{\mu([x1]) \, \mu([x])}} \, 1_{[x1]}  ,
 \end{equation}

 when $x$ ranges in the set of all finite words with letters in $\{0,1\}$, is an orthonormal basis of $L^2(\mu)$. But this  basis is not Fourier-like (it is not $C^0$ uniformly bounded).

We will produce a Fourier-like basis from this Haar basis.

 Bowen formula for a equilibrium probability  $\mu$ with Jacobian $J$  (see  Definition 1.1 in \cite{IoYa} o in \cite{Bow})  claims that there exists $K_1,K_2>0$, such that  for all $n$, all cylinder $x=[x_1,x_2,..,x_n]$ and any $y\in [x_1,x_2,..,x_n]$
   \begin{equation} \label{iommi} K_1<\frac{\mu([x_1,x_2,..,x_n])}{\Pi_{j=0}^{n-1} J(\sigma^j(y))}< K_2.
   \end{equation}

    Given $n$, when $x$ ranges in $\mathfrak{C}(n)$, the cylinders $[x,0]$ and $[x,1]$ determine the partition $\mathfrak{C}(n+1)$.

     We claim that the quotients
    \begin{equation} \label{iommi8} \frac{\mu([x 0])}{\mu([x 1])},
    \end{equation} when $x$ ranges in $\mathfrak{C}(n)$, are bounded above and below by positive constants which are independent of $n$.

For the proof of the claim,   for a fixed $x\in \mathfrak{C}(n)$, take initially $y_{[x,r]}\in [x_1,x_2,..,x_n,r]$, $r=0,1$, and it follows  from \eqref{iommi} that
 \begin{equation} \label{iommi2} K_1<\frac{\mu([x_1,x_2,..,x_n,r])}{\Pi_{j=0}^{n} J(\sigma^j(y_{[x,r]}))}< K_2.
   \end{equation}

   Note that $ y_{[x,r]}\in[x]$, for $r=0,1$.

  There exists $C_1,C_2>0$, such that
  \begin{equation} \label{iommi7}  C_1< \frac{ \Pi_{j=0}^{n} J(\sigma^j(y_{[x,0]})) }{\Pi_{j=0}^{n} J(\sigma^j(y_{[x,1]})) }<C_2.
  \end{equation}

   Indeed,  from \eqref{iommi} applied  for the case  $[x]=[x_1,x_2,..,x_n])$,  and $r=0,1$, we get
   \begin{equation} \label{iommi1} K_1<\frac{\mu([x_1,x_2,..,x_n])}{\Pi_{j=0}^{n-1} J(\sigma^j(y_{[x,r]}))}< K_2.
   \end{equation}

   Note also that $\Pi_{j=0}^{n} J(\sigma^j(y_{[x,r]}))= \Pi_{j=0}^{n-1} J(\sigma^j(y_{[x,r]}))\, J(\sigma^n(y_{[x,r]}))$, for $r=0,1$.

   Therefore,
 $$   \frac{ \Pi_{j=0}^{n} J(\sigma^j(y_{[x,0]})) \,}{\Pi_{j=0}^{n} J(\sigma^j(y_{[x,1]})) }= \frac{ \Pi_{j=0}^{n-1} J(\sigma^j(y_{[x,0]}))\, J(\sigma^n(y_{[x,0]}))}{\Pi_{j=0}^{n-1} J(\sigma^j(y_{[x,1]})) \,  J(\sigma^n(y_{[x,1]}))}<$$
 $$\frac{K_2}{ K_1} \frac{ \mu([x_1,x_2,..,x_n])}{\mu([x_1,x_2,..,x_n])} \frac{ \, J(\sigma^n(y_{[x,0]}))}{ J(\sigma^n(y_{[x,1]}))}=\frac{K_2}{ K_1} \,\frac{ \, J(\sigma^n(y_{[x,0]}))}{ J(\sigma^n(y_{[x,1]}))} ,$$
 and this shows the existence of $C_2>0$ in  the last inequality in \eqref{iommi7}. The proof of the other inequality in \eqref{iommi7} is similar. Then, \eqref{iommi7} is true.

   Finally, from \eqref{iommi} and \eqref{iommi7}
   \begin{equation} \label{iommi9} \frac{\mu([x_1,x_2,..,x_n,0])}{\mu([x_1,x_2,..,x_n,1])}< \frac{ K_1\, K_2\,\Pi_{j=0}^{n} J(\sigma^j(y_{[x,0]})) }{\Pi_{j=0}^{n} J(\sigma^j(y_{[x,1]})) } <K_1\,K_2\, C_2
   \end{equation}

   The proof for the lower bound in \eqref{iommi8} is similar showing that
   \eqref{iommi8} is true.

   \medskip

   Now we are going to define for each $n$ a function $\rho_n$ whose support is the set $M$.

   For fixed $n$ and each finite word $x\in \mathfrak{C}(n)$ take
    \begin{equation} \label{eert12}
 c_{[x]} = \sqrt{\mu([x])}  \,e_x  = \sqrt{\frac{\mu([x1])}{\mu([x0]) \,}} \, 1_{[x0]} - \sqrt{\frac{\mu([x0])}{\mu([x1]) }} \, 1_{[x1]}  ,
 \end{equation}

When $x$ ranges in the set of all words of length $n$, the cylinders $[x,j]$, $j=0,1$, determine the  partition  $ \mathfrak{C}_{n+1}$ of $M$.

For each $n$, set $\rho_n$ as  the continuous function
\begin{equation} \label{kret} \rho_n = \sum_{x \in \mathfrak{C}_{n}} c_{[x]}\end{equation}

From the above and \eqref{iommi8} is easy to see that the family $\rho_n $,  $n \in \mathbb{N},$ is an orthogonal family for $L^2(\mu)$, which is uniformly  $C^0$ and $L^2$ bounded above  and bounded away from zero.

In order to show that is a basis it is necessary to show that the family $\rho_n $,  $n \in \mathbb{N},$ generate the Borel sigma-algebra on $M$.  This can be achieved  following the same line of the reasoning of Remark \ref{pois1} in Example \ref{exx1}.

Therefore, the family $\rho_n $,  $n \in \mathbb{N},$ is a
Fourier-like basis for the space $L^2(\mu)$, where $\mu$ is the equilibrium state for the H\"older potential  $A=\log J.$

\subsection{A Fourier-like basis for the kernel of the Ruelle operator in the general case of equilibrium probabilities on $\{0,1\}^\mathbb{N}$.} \label{aqui1}

In this section we will exhibit a family $\hat{\rho}_n$ which is a Fourier-like basis for the kernel of the Ruelle operator $\mu_J.$

For the general case, a typical function $\psi$ on the kernel of $\op{L}_A$, where $A=\log J,$ can be obtained in the following way: take a H\"older continuous function $\varphi$ and consider first
$\psi$ defined on the cylinder $[0]$, where  we set $\psi (0,x)=\varphi (0,x)  $, which therefore it is well defined for all $x\in M$. On the other hand, on the cylinder $[1]$ we define $\psi: [1]\to \mathbb{R}$ in { such way that for  $y=(1,x) \in [1]$
$$\psi(y)= - \frac{(J \circ \mathfrak{S}) (y) \, (\varphi \circ \mathfrak{S}) (y) }{ J (y)\,}\, \, \mathfrak{1}_{[1]}(y) .$$

It is easy to see that $\psi$ is on the kernel of $\op{L}_A$.  Indeed, given $x$ we get
$$\op{L}_A (\psi) (x) =J(0,x) \psi (0,x) + J(1,x) \psi (1,x) =$$
$$J(0,x) \varphi(0,x) - J( 1,x)  \frac{J ( \mathfrak{S} (1,x))\, \varphi (\mathfrak{S} (1,x) )}{ J (1,x)}\, \, \mathfrak{1}_{[1]}(1,x)= $$
$$J(0,x) \varphi(0,x) -   J (0,x)\, \varphi (0,x) \, = 0.$$

The function $\mathcal{T}$ taking $\varphi:M \to \mathbb{R}$ to $\psi:M \to \mathbb{R}$  in the kernel is defined by
$$\psi=\mathcal{T}(\varphi)= \varphi \,\, \mathfrak{1}_{[0]} -   \frac{(J \circ \mathfrak{S} )\,( \varphi \circ \mathfrak{S} )}{ J \,}\, \, \mathfrak{1}_{[1]}$$

We claim that $\mathcal{T}$ is a linear projection  onto   the kernel, that is $\mathcal{T}(\varphi)=\varphi$, for all $\varphi$ on the kernel. Moreover, if $\varphi$ is on the kernel, we get that for all $x$
$$ \varphi(1,x)= - \frac{\varphi(0,x) J(0,x)}{J(1,x)}.$$

\begin{remark} \label{hjg} Note that  the function $    \frac{(J \circ \mathfrak{S} )\,( \varphi \circ \mathfrak{S} )}{ J \,}\, $ (which is defined on  $[1]$) is linear on $\varphi$ (a function defined just on $[0])$.
\end{remark}

We consider  the family   $\hat{\mathfrak{a}}_x:[0] \to \mathbb{R}$ (see\eqref{eert12}) where
 \begin{equation} \label{eert123}
 \hat{\mathfrak{a}}_x := c_{[0 x]}   = \sqrt{\frac{\mu([0x1])}{\mu([0x0]) \,}} \, 1_{[0x0]} - \sqrt{\frac{\mu([0x0])}{\mu([0x1]) }} \, 1_{[0x1]},
 \end{equation}
where $x$ ranges in the set of all finite words $x$. For fixed $n$, the pair of cylinders $[0x0]$, $[0x1]$, where $x$ ranges in $\mathfrak{C}_{n}$, describes a partition of cylinder $[0]$
by the cylinders in $\mathfrak{C}_{n+2}$  (using  just the ones contained in the cylinder $[0])$.

Any given H\"older function $f$ with support on $[0]$ can be written as an infinite sum $f=\sum_x r_x \hat{\mathfrak{a}}_x.$  Indeed, taking $f \mathfrak{1}_{[0]} + 0 \mathfrak{1}_{[1]}$, and expressing it in the basis
\eqref{eert12}, we will just need to take elements of the form $c_{[0x]}.$

For any finite word  $x$ consider the  function $\mathfrak{a}_x$ (defined on $M$) which in the cylinder $[0]$ coincides with $\hat{\mathfrak{a}}_x$, and in the cylinder $1$ the function $\mathfrak{a}_x$  is {\bf given by}
\begin{equation} \label{ggs} \mathfrak{a}_x=   -\,  \frac{J \circ \mathfrak{S} \,  }{ J \,}\, [\,\sqrt{\frac{\mu([0x1])}{\mu([0x0]) \,}} \, 1_{[1x0]} - \sqrt{\frac{\mu([0x0])}{\mu([0x1]) }} \, 1_{[1x1]} \,] \,.
\end{equation}

Each function $\mathfrak{a}_x$ is in the kernel of $\mu_J$. It follows from \eqref{iommi9} that  the functions $\mathfrak{a}_x$, where $x$ is a finite word,
are uniformly bounded below and above in the $C^0$ and $L^2$ norm.

Note that  $\sum_x r_x \hat{\mathfrak{a}}_x$ restricted to  the cylinder $[0]$ coincides
with  the $f$ given above.

It follows from the above and Remark \ref{hjg} that any function on the kernel can be written as a
infinite sum $\sum_x r_x \mathfrak{a}_x$.

We denote by $\tau_0:M \to [0]$ the inverse of $\sigma|_{[0]}$ and
$\tau_1:M \to [0]$ the inverse of $\sigma|_{[1]}$. The functions $\tau_0$ and $\tau_1$ are called the inverse branches of $\sigma$.

\begin{lemma} \label{enf3}  Given a function $\varphi:[0]\to \mathbb{R}$ we get that
\begin{equation} \label{poo1} \int_{[0]} \varphi d \mu = \int_M \varphi (\tau_0)\, J(\tau_0) d \mu.
\end{equation}
In a similar way,  function $\phi:[1]\to \mathbb{R}$ we get that
\begin{equation} \label{poo2} \int_{[1]} \phi d \mu = \int_M \phi (\tau_1)\, J(\tau_1) d \mu.
\end{equation}
\end{lemma}

The above Lemma which characterizes the Jacobian $J$ as a Radon-Nykodin derivative is a classical result in Thermodynamic formalism (see (5) in \cite{LR} or \cite{Ma}).

\begin{lemma} \label{enf2}   Given a  continuous  function $f:[0] \to \mathbb{R}$, then
\begin{equation} \label{kw1}  \int_{[0]} f d \mu = \int_{[1]} \frac{(J \circ\mathfrak{S}) (y) )\,  (f \circ\mathfrak{S})}{ J (y)\,} d\mu.\end{equation}
\end{lemma}

\begin{proof} First note that $\mathfrak{S}= \tau_0 \circ T.$

In \eqref{poo2} take
$$\phi=   \frac{(J \circ\mathfrak{S} )\,  (f \circ\mathfrak{S})}{ J (y)\,}=
 \frac{J ( (\tau_0 \circ T) (y) )\,  f (( \tau_0 \circ T) (y) )}{ J (y)\,}.
$$

Then, from \eqref{poo2} and \eqref{poo1} we get
$$ \int_{[1]} \phi d \mu =\int_M  (\phi \circ \tau_1) \, (J \circ \tau_1) d \mu= $$
$$ \int_M \frac{J ( (\tau_0 \circ T) (\tau_1(y)) )\,  f (( \tau_0 \circ T) (\tau_1(y)) )}{ J (\tau_1(y))\,} \, (J \circ \tau_1) d \mu=$$
$$ \int_M J ( \tau_0 (y) )\,  f ( \tau_0 (y) ) d \mu=\int_{[0]}\, f \,d \mu.$$

\end{proof}

\begin{lemma}  \label{enf1} Given different words $x$ and $y$ we get that
\begin{equation} \label{kw2} \int \mathfrak{a}_x\, \mathfrak{a}_x\, d \mu=0.
\end{equation}

\end{lemma}
\begin{proof} First note that it follows from orthogonality of the family of functions of the form $e_{[0x]}$ (where $x$ is a finite word) that
$$\int_{[0]} \mathfrak{a}_x\, \mathfrak{a}_x\, d \mu=\int_{[0]} \hat{\mathfrak{a}}_x\, \hat{\mathfrak{a}}_x\, d \mu=0.$$

Now, on Lemma \ref{enf2} take $f=\hat{\mathfrak{a}}_x\, \hat{\mathfrak{a}}_x$. Then, it follows that
$$\int_{[1]} \mathfrak{a}_x\, \mathfrak{a}_x\, d \mu=0.$$

As
$$ \int \mathfrak{a}_x\, \mathfrak{a}_x\, d \mu=  \int_{[0]}
\mathfrak{a}_x\, \mathfrak{a}_x\, d \mu+  \int_{[1]}
\mathfrak{a}_x\, \mathfrak{a}_x\, d \mu$$
the claim follows.

For each fixed $n \in\mathbb{N}$ we get that the support of each function
$\mathfrak{a}_x$, where $x \in \mathfrak{C}_n$, is
$[0x0] \cup[0x1] \cup [1x0] \cup [1x1]$.  When $x$ ranges in $x \in \mathfrak{C}_n$ this defines a partition of
$\mathfrak{C}_{n+2}$.

In a similar way as in the other cases we have considered before, we can produce a Fourier-like basis from the Haar basis $\mathfrak{a}_x$, where
$x$ is a finite word. Indeed, for each $n\in \mathbb{N}$ take
$$\hat{\rho}_n =  \sum_{x \in \mathfrak{C}_n } \mathfrak{a}_x.$$

From Lemma  \ref{enf1} this family is orthogonal. Dividing each element by its $L^2$ norm we can get an orthonormal family which will be also denoted by $\hat{\rho}_n$, $n \in \mathbb{N}$. Now,  collecting all the claims we proved before we get that the family $\hat{\rho}_n$ is a Fourier-like basis for the kernel of the Ruelle operator $\mu_J.$

\end{proof}

\medskip

We thank the referee of this work for the valuable suggestions for several changes on the text  and for the careful reading.

\medskip

\end{document}